\theoremstyle{definition}
\newtheorem{theo}{Theorem}[section]
\newtheorem{lemma}[theo]{Lemma}
\newtheorem{prop}[theo]{Proposition}
\newtheorem{defi}[theo]{Definition}
\newtheorem{pozn}[theo]{Remark}
\newtheorem{con}[theo]{Construction}
\newtheorem{nota}[theo]{Notation}
\newtheorem{pr}[theo]{Example}
\newtheorem{nonpr}[theo]{Non-example}
\newcommand{\talgl}{\text{T-Alg}_l}
\newcommand{\talgs}{\text{T-Alg}_s}
\newcommand{\talgc}{\text{T-Alg}_c}
\newcommand\res[1]{\text{Res}(#1)}
\newcommand\cnr[1]{\text{Cnr}(#1)}
\newcommand\cod[1]{\text{CoDesc}(#1)}
\newcommand{\gph}{\text{Graph}}
\newcommand{\fc}{\text{fc}}
\newcommand{\set}{\text{Set}}
\newcommand\spn[1]{\text{Span}(#1)}
\newcommand{\cat}{\text{Cat}}
\newcommand\dbl{\text{Dbl}}
\newcommand\ob{\text{ob }}
\newcommand\mor{\text{mor }}
\newcommand\spanc{\text{Span}}
\newcommand\sq[1]{\text{Sq}(#1)}
\newcommand\pbsq[1]{\text{PbSq}(#1)}
\newcommand{\pscr}{\text{Crossed}}
\newcommand{\sfs}{\mathcal {SFS}}
\newcommand{\ofs}{\mathcal {OFS}}
\newcommand{\coddiscr}{\text{CodDiscr}}
\newcommand{\factdbl}{\text{FactDbl}}
\newcommand\ca{\mathcal {A}}
\newcommand\cb{\mathcal {B}}
\newcommand\cc{\mathcal {C}}
\newcommand\cf{\mathcal {F}}
\newcommand\ce{\mathcal {E}}
\newcommand\ck{\mathcal {K}}
\newcommand\cm{\mathcal {M}}
\newcommand\cs{\mathcal {S}}
\newcommand\cy{\mathcal {Y}}
\newcommand\cz{\mathcal {Z}}
\definecolor{arsenic}{rgb}{0.23, 0.27, 0.29}
\definecolor{ashgrey}{rgb}{0.7, 0.75, 0.71}
\definecolor{charcoal}{rgb}{0.12, 0.18, 0.22}
\definecolor{bulgarianrose}{rgb}{0.28, 0.02, 0.03}
\definecolor{darkblue}{rgb}{0, 0, 0.5}
\date{\today}
\newcommand\blfootnote[1]{%
  \begingroup
  \renewcommand\thefootnote{}\footnote{#1}%
  \addtocounter{footnote}{-1}%
  \endgroup
}
\begin{document}
\title[Factorization systems and double categories]
{Factorization systems and double categories}
\author[Miloslav Štěpán]
{Miloslav Štěpán}

\makeatletter
\dottedcontents{section}[1.5em]{\bfseries}{1.3em}{.6em}
\makeatother

\begin{abstract}
We show that factorization systems, both strict and orthogonal, can be equivalently described as double categories satisfying certain properties. This provides conceptual reasons for why the category of sets and partial maps or the category of small categories and cofunctors admit orthogonal factorization systems.

The theory also gives an explicit description of various lax morphism classifiers and explains why they admit strict factorization systems.


\end{abstract} 

\blfootnote{This work was supported from Operational Programme Research, Development and Education ``Project Internal Grant Agency of Masaryk University'' (No. $\text{CZ}.02.2.69 \symbol{92} 0.0\symbol{92} 0.0\symbol{92}19 \underline{\enskip} 073\symbol{92}0016943)$.\\
The author also acknowledges the support of Grant Agency of the Czech Republic under the grant 22-02964S.}

\maketitle
\tableofcontents

\section{Introduction}

Both double categories and factorization systems are widespread in category theory. In this paper, we will study the relationship between them. A double category consists of objects, vertical morphisms, horizontal morphisms and squares just like the one pictured below:
\[\begin{tikzcd}
	a && b \\
	\\
	c && d
	\arrow[""{name=0, anchor=center, inner sep=0}, "g", from=1-1, to=1-3]
	\arrow["v", from=1-3, to=3-3]
	\arrow["u"', from=1-1, to=3-1]
	\arrow[""{name=1, anchor=center, inner sep=0}, "h"', from=3-1, to=3-3]
	\arrow["\alpha", shorten <=9pt, shorten >=9pt, Rightarrow, from=0, to=1]
\end{tikzcd}\]
In a general double category you cannot compose vertical morphisms with horizontal ones, but if you could, you might interpret the above square $\alpha$ as telling us that the morphism $v \circ g$ (horizontal followed by vertical) can be factored as $h \circ u$ (vertical followed by horizontal) - this is reminiscent of ordinary factorization systems on a category.

Taking this philosophy to heart, we assign to a double category $X$ a certain \textit{category of corners} $\cnr{X}$ (a concept introduced by Mark Weber in \cite{internalalg}), in which composition of vertical and horizontal morphisms is possible, and for which squares in $X$ turn into commutative squares in $\cnr{X}$. Regarding the double category as a diagram $X: \Delta^{op} \to \cat$, producing $\cnr{X}$ amounts to taking the \textit{codescent object} of $X$, a 2-categorical colimit that is an analogue of ordinary coequalizers.

Because the usage of codescent objects is related to the classification of pseudo/lax morphisms between strict algebras for a 2-monad (see \cite{codobjcoh}), the category of corners construction gives us an explicit description of lax morphism classifiers for a certain class of 2-monads.

The paper is organized as follows:

\begin{itemize}
\item In Section \ref{sekce_dblcats} we recall the basic notions of double category theory and define a slight generalization of crossed double categories of   \cite{internalalg}. We also describe the category of corners construction for this class of double categories and mention some examples.

\item In Section \ref{sekcefsdbl} we establish two equivalences: the first is the equivalence between strict factorization systems and double categories for which every top right corner can be uniquely filled into a square:
\[\begin{tikzcd}
	a & b \\
	& c
	\arrow["g", from=1-1, to=1-2]
	\arrow["u", from=1-2, to=2-2]
\end{tikzcd}\]
The second is the equivalence between orthogonal factorization systems and a special kind of crossed double categories.
 
\item In Section \ref{sekcecodobjdbl} we recall codescent objects and use the category of corners construction to give explicit descriptions for various (co)lax morphism classifiers.
\end{itemize}

\noindent \textbf{Prerequisities}: We assume basic familiarity with double categories and factorization systems. For Section \ref{sekcecodobjdbl} we further assume that the reader is familiar with lax/colax morphisms between strict $T$-algebras for a 2-monad, as defined in \cite[Page 1.2]{twodim}.

\bigskip

\noindent \textbf{Acknowledgements}: I want to thank my Ph.D. supervisor John Bourke for his guidance and careful readings of all the drafts of this paper.

\section{Double categories}\label{sekce_dblcats}



\subsection{Basic notions}\label{subsekce_dblcats}

\begin{nota}
A \textit{double category} $X$ is an internal category in $\cat$. In particular it consists of the following diagram in $\cat$:
\[\begin{tikzcd}
	{X_2} && {X_1} && {X_0}
	\arrow["s"{description}, from=1-5, to=1-3]
	\arrow["{d_1}"{description}, shift left=4, from=1-3, to=1-5]
	\arrow["{d_0}"{description}, shift right=4, from=1-3, to=1-5]
	\arrow["{d_1}"{description}, from=1-1, to=1-3]
	\arrow["{d_0}"{description}, shift right=4, from=1-1, to=1-3]
	\arrow["{d_2}"{description}, shift left=4, from=1-1, to=1-3]
\end{tikzcd}\]
We call objects of $X_0$ \textit{the objects} of $X$, morphisms of $X_0$ the \textit{vertical morphisms}, objects of $X_1$ the \textit{horizontal morphisms}, the morphisms of $X_1$ \textit{squares}:
\[\begin{tikzcd}
	a && b \\
	\\
	c && d
	\arrow["u"', from=1-1, to=3-1]
	\arrow[""{name=0, anchor=center, inner sep=0}, "g", from=1-1, to=1-3]
	\arrow["v", from=1-3, to=3-3]
	\arrow[""{name=1, anchor=center, inner sep=0}, "h"', from=3-1, to=3-3]
	\arrow["\alpha", shorten <=9pt, shorten >=9pt, Rightarrow, from=0, to=1]
\end{tikzcd}\]
We refer to the composition in the category $X_1$ as \textit{vertical square composition}, the \textit{horizontal} composition of horizontal morphisms as well as squares is given by the functor $d_1: X_2 \to X_1$. Similarly, an identity morphism in $X_1$ will be called a \textit{vertical identity square}, and a \textit{horizontal identity square} on a vertical morphism $u$ will be given by $s(u)$. By an \textit{identity square} in $X$ we mean a square that is either a vertical or a horizontal identity. Objects and horizontal morphisms form a category that we'll denote by $h(X)$.

Double categories together with \textit{double functors} form a category that we'll denote by $\dbl$.
\end{nota}

Here we recall some definitions from \cite{limitsindbl} that will make an appearance in the paper:

\begin{defi}[Duals and transposes]
Any double category $X$ has its \textit{transpose} $X^T$ obtained by switching vertical and horizontal morphisms and compositions. Any double category $X$ has its \textit{vertical dual} which we denote by $X^{v}$. It is  defined by putting:
\[
(X^{v})_i = X_i^{op} \text{ for } i \in \{ 0,1,2 \}.
\]
Similarly, domain and codomain functors for $X^{v}$ are obtained by applying $(-)^{op}$ on those for $X$.

There is also a notion of a \textit{horizontal dual} $X^{h}$, which is a diagram obtained from $X$ by switching $d_0$'s and $d_1$'s.
\end{defi}

\begin{defi}
A double category $X$ is \textit{flat} if any square $\alpha$ is uniquely determined by its boundary.
\end{defi}

\begin{defi}
A double category $X$ is (strictly) \textit{horizontally invariant} if for any two invertible horizontal morphisms $g,h$ and every vertical morphism $u$ there exists a unique square filling the picture\footnote{This definition differs from the one in \cite{limitsindbl} in that we require the filler square to be unique.}:
\[\begin{tikzcd}
	a && b \\
	\\
	c && d
	\arrow[""{name=0, anchor=center, inner sep=0}, "{g \cong}", from=1-1, to=1-3]
	\arrow["u", from=1-3, to=3-3]
	\arrow[""{name=1, anchor=center, inner sep=0}, "h\cong"', from=3-1, to=3-3]
	\arrow[dashed, from=1-1, to=3-1]
	\arrow["{\exists ! \lambda}", shorten <=9pt, shorten >=9pt, Rightarrow, from=0, to=1]
\end{tikzcd}\]

\noindent We say that $X$ is (strictly) \textit{vertically invariant} if its transpose $X^T$ is horizontally invariant. A double category that is both horizontally and vertically invariant will be called (strictly) \textit{invariant}.
\end{defi}

\color{black}

\begin{pr}\label{prsqcc}
Let $\cc$ be a category. There is a double category $\sq{\cc}$ such that:
\begin{itemize}
\item objects are the objects of $\cc$,
\item vertical morphisms are those of $\cc$,
\item horizontal morphisms are those of $\cc$,
\item squares are commutative squares in $\cc$.
\end{itemize}
\end{pr}

\begin{pr}\label{prpbsqmpbsq}
There is a sub-double category $\text{PbSq}(\cc) \subseteq \sq{\cc}$ with the same objects and morphisms, whose squares are the pullback squares in $\cc$.

Another example is a sub-double category $\text{MPbSq}(\cc) \subseteq \text{PbSq}(\cc)$ with the same objects and horizontal morphisms whose vertical morphisms are monomorphisms in the category $\cc$.
\end{pr}

\begin{pr}\label{prbofib}
Let $\ce$ be a category with pullbacks. There is a double category $\text{BOFib}(\ce)$ such that:
\begin{itemize}
\item objects are internal categories\footnote{As defined in \cite[8.1]{handbook} for instance.} in $\ce$,
\item vertical morphisms are internal functors that are \textit{discrete opfibrations}, that is, internal functors $F: A \to B$ such that the following square is a pullback in $\ce$:
\[\begin{tikzcd}
	{A_1} && {B_1} \\
	\\
	{A_0} && {B_0}
	\arrow["s", from=1-3, to=3-3]
	\arrow["s"', from=1-1, to=3-1]
	\arrow["{F_1}", from=1-1, to=1-3]
	\arrow["{F_0}"', from=3-1, to=3-3]
	\arrow["\lrcorner"{anchor=center, pos=0.125}, draw=none, from=1-1, to=3-3]
\end{tikzcd}\]
\item horizontal morphisms are internal functors $F: A \to B$ that are \textit{bijections on objects}, i.e. the object part morphism $F_0 : A_0 \to B_0$ is an isomorphism in $\ce$,
\item a square in $\text{BOFib}(\ce)$ is a pullback square in $\cat(\ce)$.
\end{itemize}
\end{pr}

\color{black}

\noindent Note that all of the above examples are flat and invariant.

\subsection{Crossed double categories}\label{subsekce_pscr}


Crossed double categories (a generalization of \textit{crossed simplicial groups} of \cite{crossedsimpgps}) were introduced by Mark Weber in \cite{internalalg} to calculate various internal algebra classifiers. For instance, if $S$ is the free symmetric strict monoidal category 2-monad on $\cat$, the bar construction (also called a \textit{resolution}, see Definition \ref{defi_resolution}) $\res{*}$ of a terminal $S$-algebra $*$ has the structure of a crossed double category.

In that paper, any crossed double category can be turned into a category ``in the best possible way” - this is called the \textit{category of corners} construction. In the case of $\res{*}$ the category of corners construction produces the \textit{free symmetric strict monoidal category containing a commutative monoid}, which happens to be the category $\text{FinSet}$ of finite ordinals and all functions between them.

In this paper we consider a slight generalization of crossed double categories, obtained by dropping the ``splitness” assumption on the opfibration that appears in the definition. This allows us to consider a bigger class of examples - the ones for which there is no canonical choice of ``opcartesian lifts”. We then present an analogue of the category of corners construction for this wider class of double categories and prove some of its key properties. All of this is in preparation for Section \ref{sekcefsdbl} where we show that under some conditions the category of corners admits a strict or an orthogonal factorization system.




\subsubsection{Definition and examples}

\begin{defi}\label{deficrossed}
A double category $X$ is said to be \textit{crossed} if $d_0 : X_1 \to X_0$ is an opfibration and $d_1: X_2 \to X_1, s : X_0 \to X_1$ are morphisms of opfibrations\footnote{Note that the map $d_0^2 = d_0 \circ d_0 : X_2 \to X_0$ is an opfibration since it is a composite of an opfibration and a pullback of an opfibration.}:
\[\begin{tikzcd}
	{X_0} && {X_1} && {X_2} \\
	\\
	&& {X_0}
	\arrow["{s_0}", from=1-1, to=1-3]
	\arrow["{d_1}"', from=1-5, to=1-3]
	\arrow["{d_0}", from=1-3, to=3-3]
	\arrow["{d_0^2}", from=1-5, to=3-3]
	\arrow[Rightarrow, no head, from=1-1, to=3-3]
\end{tikzcd}\]
\end{defi}

\noindent In elementary terms, this is to say the following:

A square $\kappa$ is said to be \textit{opcartesian} (by which we mean $d_0$-opcartesian if regarded as a morphism in $X_1$) when given any square $\alpha$ and a vertical morphism $v$ (as in the picture below), there exists a unique square $\beta$ so that the following equality of squares holds:
\begin{equation}\label{eqopcartesiansquare}
\begin{tikzcd}
	a && b && a && b \\
	{\widehat{b}} && c & {=} &&& c \\
	d && c && d && c
	\arrow[""{name=0, anchor=center, inner sep=0}, "g", from=1-1, to=1-3]
	\arrow["u", from=1-3, to=2-3]
	\arrow["{\widehat{u}}"', from=1-1, to=2-1]
	\arrow[""{name=1, anchor=center, inner sep=0}, "{\widehat{g}}"{description}, from=2-1, to=2-3]
	\arrow["v", from=2-3, to=3-3]
	\arrow["w"', from=1-5, to=3-5]
	\arrow[""{name=2, anchor=center, inner sep=0}, "g", from=1-5, to=1-7]
	\arrow[""{name=3, anchor=center, inner sep=0}, "h"', from=3-5, to=3-7]
	\arrow[""{name=4, anchor=center, inner sep=0}, "h"', from=3-1, to=3-3]
	\arrow["\theta"', from=2-1, to=3-1]
	\arrow["u", from=1-7, to=2-7]
	\arrow["w"', curve={height=24pt}, from=1-1, to=3-1]
	\arrow["v", from=2-7, to=3-7]
	\arrow["\alpha", shorten <=9pt, shorten >=9pt, Rightarrow, from=2, to=3]
	\arrow["{\exists ! \beta}", shorten <=4pt, shorten >=4pt, Rightarrow, from=1, to=4]
	\arrow["\kappa", shorten <=4pt, shorten >=4pt, Rightarrow, from=0, to=1]
\end{tikzcd}
\end{equation}

\noindent To say that $d_0: X_1 \to X_0$ is an opfibration is to say that any tuple $(g,f)$ of a ``composable” pair of a  horizontal and a vertical morphisms (as pictured below) can be filled to an opcartesian square:
\begin{equation}\label{eqd0opfib}
\begin{tikzcd}
	a && b \\
	\\
	&& c
	\arrow["g", from=1-1, to=1-3]
	\arrow["f", from=1-3, to=3-3]
\end{tikzcd}
\end{equation}
Such tuples will be referred to as (top-right) \textit{corners}.

Finally, to say that $d_1: X_2 \to X_1$ and $s:X_0 \to X_1$ are morphisms of opfibrations is to say that the opcartesian squares are closed under horizontal composition and that every horizontal identity square is opcartesian.

We will denote by $\pscr$ the full subcategory of $\dbl$ spanned by crossed double categories.

\begin{pozn}[Split version]
Crossed double categories studied by Mark Weber (\cite{internalalg}) are defined as in Definition \ref{deficrossed}, except it is required that $d_0: X_1 \to X_0$ is a \textbf{split} opfibration and the maps $d_1, s$ are morphisms of split opfibrations.

This amounts to, for every top-right corner $(g,f)$, having \textbf{a choice} of opcartesian square $\kappa_{g,f}$ filling the corner, and requiring that identity squares are chosen opcartesian, and moreover vertical and horizontal composition of chosen opcartesian squares is chosen opcartesian. In this paper we will call them \textit{split crossed} to emphasize the presence of chosen filler squares.
\end{pozn}

\begin{pozn}[Dual version]\label{pozn_pscocr}
There is a dual version of a crossed double category that we will call \textit{co-crossed}, it is obtained by replacing ``opfibration” by ``fibration” everywhere. Note then that the double category $X$ is co-crossed if and only if $X^{v}$ is crossed.
\end{pozn}

\begin{pozn}
The requirement that every corner can be filled into an opcartesian square is equivalent to saying that every corner can be filled into a pre-opcartesian square and vertical composition of pre-opcartesian squares is pre-opcartesian. By a \textit{pre-opcartesian} square we mean a square satisfying \eqref{eqopcartesiansquare} only for squares for which $v$ is the identity. This equivalence is proven \cite[Proposition 8.1.7]{handbook2} for a general fibration.
\end{pozn}

\color{black}



\begin{pr}
If $\cc$ is a category with pullbacks, the double category $\sq{\cc}$ is co-crossed: a square is cartesian (with respect to the codomain functor\\$d_0: \cc^2 \to \cc$) if and only if it is a pullback square in $\cc$. Clearly, vertical and horizontal composition of pullback squares yields a pullback square, and identity squares are pullbacks.

For similar reasons, the following double categories are all  co-crossed. In each of these, every square is cartesian:
\begin{itemize}
\item $\text{PbSq}(\cc)$,
\item $\text{MPbSq}(\cc)$,
\item $\text{BOFib}(\ce)$.
\end{itemize}
\end{pr}

\begin{pr}\label{pr_awfs_dblcat}
Let $(L, R)$ be an \textit{algebraic weak factorization system} (\cite[2.2]{awfs}) on a category $\cc$ with pullbacks. There is an associated double category $R\text{-}\mathbb{A}lg$ of $R$-algebras. It can be shown that the codomain functor of this double category is a fibration (\cite[Proposition 8]{awfs}) and moreover, $d_1, s$ are morphisms of fibrations. Thus $R\text{-}\mathbb{A}lg$ is a co-crossed double category.
\end{pr}

Of note is that it is not these examples that will play a  role later on, but rather their vertical duals (that are crossed).


\begin{pr}\label{pr_crosseddblcats}
Assume $(T,m,i)$ is a 2-monad on a 2-category $\ck$ and let $(A,a)$ be a strict $T$-algebra. By its \textit{resolution}, denoted $\res{A,a}$, we mean the following diagram in $\talgs$:
\[\begin{tikzcd}
	{T^3A} && {T^2A} && TA
	\arrow["{Ti_A}"{description}, from=1-5, to=1-3]
	\arrow["Ta"{description}, shift right=4, from=1-3, to=1-5]
	\arrow["{m_A}"{description}, shift left=4, from=1-3, to=1-5]
	\arrow["{m_{T^2A}}"{description}, shift left=4, from=1-1, to=1-3]
	\arrow["{Tm_{TA}}"{description}, from=1-1, to=1-3]
	\arrow["{T^2a}"{description}, shift right=4, from=1-1, to=1-3]
\end{tikzcd}\]
For a cartesian\footnote{$T$ preserves pullbacks and the naturality squares for $m,i$ are pullbacks} 2-monad $T$, $\res{A,a}$ is a category internal in $\talgs$.

Let now $S$ be the free symmetric strict monoidal category 2-monad on $\cat$. Recall that for a category $\ca$, $S\ca$ has objects the tuples of objects of $\ca$, and a morphism $(a_1, \dots, a_n) \to (b_1, \dots, b_n)$ is a tuple $((f_1,\dots, f_n), \rho)$, where $\rho$ is a permutation on the n-element set and $f_i : a_i \to a_{\rho(i)}$ are morphisms in $\ca$.

Denote by $*$ the terminal $S$-algebra. Since $S$ is cartesian, $\res{*}$ is a double category. Moreover, it is (split) crossed by \cite[Example 4.4.5]{internalalg}. This double category has finite ordinals as objects, order-preserving maps as horizontal morphisms, permutations as vertical morphisms and squares being commutative squares in $\set$.

Analogous results hold if we instead consider the free braided strict monoidal category 2-monad on $\cat$. 
\end{pr}
\color{black}

\noindent A special class of a crossed double category will be of interest to us:

\subsubsection{Codomain-discrete double categories}

\begin{defi}\label{deficoddiscr}
A double category $X$ will be called \textit{codomain-discrete} if every top-right corner can be uniquely filled into a square: 
\[\begin{tikzcd}
	a && b \\
	\\
	\bullet && c
	\arrow[""{name=0, anchor=center, inner sep=0}, "g", from=1-1, to=1-3]
	\arrow["u", from=1-3, to=3-3]
	\arrow[dashed, from=1-1, to=3-1]
	\arrow[""{name=1, anchor=center, inner sep=0}, dashed, from=3-1, to=3-3]
	\arrow["{\exists !}", shorten <=9pt, shorten >=9pt, Rightarrow, from=0, to=1]
\end{tikzcd}\]
\end{defi}

This property amounts to the codomain functor $d_0: X_1 \to X_0$ being a discrete opfibration. In that case, $d_1, s$ are automatically morphisms of opfibrations and thus any codomain-discrete double category is crossed.

We denote by $\coddiscr \subseteq \dbl$ the full subcategory spanned by codomain-discrete double categories.

\begin{pozn}
Codomain-discrete double categories first appeared in  \cite[2.3]{crossedsimpgps} as double categories satisfying the \textit{star condition}.
\end{pozn}

Of note is the fact that every codomain-discrete double category is flat but not necessarily invariant as the following example demonstrates:

\color{black}
\begin{pr}\label{pr_atimesbdblcat}
Let $\ca, \cb$ be categories. There is a double category $X_{\ca,\cb}$ such that:
\begin{itemize}
\item Objects are the objects of $\ca \times \cb$,
\item vertical morphisms are morphisms in $\ca \times \cb$ of form $(f,1_b)$.
\item horizontal morphisms are morphisms in $\ca \times \cb$ of form $(1_a,g)$,
\item a square is a commutative square in $\ca \times \cb$.
\end{itemize}
This double category is clearly codomain-discrete and thus flat. It is not invariant because for example if we have $\theta, \psi$ distinct isomorphisms in $\ca$, the following can't be filled into a square:
\[\begin{tikzcd}
	{(a,b)} && {(a',b)} \\
	\\
	{(a,b')} && {(a',b')}
	\arrow["{(\theta,1_b)}", from=1-1, to=1-3]
	\arrow["{(1_{a'},f)}", from=1-3, to=3-3]
	\arrow["{(\psi,1_{b'})}"', from=3-1, to=3-3]
\end{tikzcd}\]
\end{pr}

\begin{pr}
Given a 2-monad $T$ on $\cat$ of form $\cat(T')$ for $T'$ a cartesian monad on $\set$, the transpose of the resolution $\res{A,a}$ of a strict $T$-algebra is codomain-discrete. We will encounter this class of examples in Section \ref{subsekce_laxmors}.
\end{pr}
\color{black}

\subsection{The category of corners}\label{subsekce_catofcnrs}\vphantom{.}\\

In \cite{internalalg}, given a crossed double category $X$, the category of corners $\cnr{X}$ is constructed in two steps: first, a 2-category of corners $\cb$ is constructed, and then the category of corners is obtained by taking connected components in each hom category of $\cb$, i.e. $\cnr{X} = (\pi_0)_* \cb$. In order to avoid very long proofs, we define $\cnr{X}$ for our notion of a crossed double category $X$ straight away without ever introducing $\cb$ (which in the absence of the splitness assumption would only be a weak 2-category). The universal property of this construction will be studied in Subsection \ref{subsekce_codobjdbl}.


\subsubsection{Definition and examples}

\begin{defi}\label{defi2cell}
Let $X$ be a double category and assume we're given two bottom-left corners $(e,m), (e',m')$ for which the domains of $e,e'$ and codomains of $m,m'$ agree (as pictured below). A \textit{2-cell} $\beta$ between them (denoted $\beta: (e,m) \Rightarrow (e',m')$) is a square as below for which $d_1(\beta) \circ e = e'$:
\[\begin{tikzcd}
	a \\
	{a'} && b \\
	{a''} && b
	\arrow["e"', from=1-1, to=2-1]
	\arrow["\theta"', from=2-1, to=3-1]
	\arrow[Rightarrow, no head, from=2-3, to=3-3]
	\arrow["{e'}"', curve={height=18pt}, from=1-1, to=3-1]
	\arrow[""{name=0, anchor=center, inner sep=0}, "{m'}"', from=3-1, to=3-3]
	\arrow[""{name=1, anchor=center, inner sep=0}, "m", from=2-1, to=2-3]
	\arrow["\beta", shorten <=4pt, shorten >=4pt, Rightarrow, from=1, to=0]
\end{tikzcd}\]
\end{defi}

\begin{con}\label{concnrpbsqs}
Let $X$ be a crossed double category. Define the  \textit{category of corners} $\cnr{X}$ as follows. Its objects are the objects of $X$, while the morphism $a \to b$ is an equivalence class of corners, denoted $[u, g]:a \to b$:
\[\begin{tikzcd}
	a \\
	{a'} & b
	\arrow["u"', from=1-1, to=2-1]
	\arrow["g"', from=2-1, to=2-2]
\end{tikzcd}\]
Here two corners $(u,g), (v,h)$ are \textit{equivalent} if and only if there exists a zigzag of 2-cells between them:
\[\begin{tikzcd}
	& {(u_1,g_1)} && {(u_n,g_n)} \\
	{(u,g)} && \dots && {(v,h)}
	\arrow["{\alpha_1}"', Rightarrow, from=1-2, to=2-1]
	\arrow["{\beta_1}", Rightarrow, from=1-2, to=2-3]
	\arrow["{\alpha_n}"', Rightarrow, from=1-4, to=2-3]
	\arrow["{\beta_n}", Rightarrow, from=1-4, to=2-5]
\end{tikzcd}\]

The identity on $a \in X$ is the equivalence class $[1_a, 1_a]$, while the composite $[v,h] \circ [u,g]$ is defined to be the equivalence class of corners obtained by filling the middle corner \textbf{with a choice} of an opcartesian square:
\[\begin{tikzcd}
	a \\
	{a'} && b \\
	{\widehat{b}} && {b'} && c
	\arrow["u"', from=1-1, to=2-1]
	\arrow[""{name=0, anchor=center, inner sep=0}, "g", from=2-1, to=2-3]
	\arrow["v", from=2-3, to=3-3]
	\arrow["h", from=3-3, to=3-5]
	\arrow["{\widehat{v}}"', from=2-1, to=3-1]
	\arrow[""{name=1, anchor=center, inner sep=0}, "{\widehat{g}}"{description}, from=3-1, to=3-3]
	\arrow["{h\circ \widehat{g}}"', curve={height=24pt}, from=3-1, to=3-5]
	\arrow["{\widehat{v}\circ u}"', curve={height=24pt}, from=1-1, to=3-1]
	\arrow["\kappa", shorten <=4pt, shorten >=4pt, Rightarrow, from=0, to=1]
\end{tikzcd}\]
Note that the composite is well defined on the equivalence classes. It is also independent of the choice of the square $\kappa$: if $\kappa'$ is another opcartesian square filling the corner, there is a unique square $\beta$ such that the following holds:
\[\begin{tikzcd}
	{a'} && b && {a'} &&& b \\
	{\widehat{b}'} && {b'} & {=} \\
	{\widehat{b}'} && {b'} && {\widehat{b}} &&& {b'}
	\arrow[""{name=0, anchor=center, inner sep=0}, "g", from=1-1, to=1-3]
	\arrow["v", from=1-3, to=2-3]
	\arrow["{v'}"', from=1-1, to=2-1]
	\arrow[""{name=1, anchor=center, inner sep=0}, "{g'}"{description}, from=2-1, to=2-3]
	\arrow[""{name=2, anchor=center, inner sep=0}, "{\widehat{g}}"', from=3-1, to=3-3]
	\arrow[Rightarrow, no head, from=2-3, to=3-3]
	\arrow[""{name=3, anchor=center, inner sep=0}, "g", from=1-5, to=1-8]
	\arrow["v", from=1-8, to=3-8]
	\arrow["{\widehat{v}}"', from=1-5, to=3-5]
	\arrow[""{name=4, anchor=center, inner sep=0}, "{\widehat{g}}"', from=3-5, to=3-8]
	\arrow["\theta"', from=2-1, to=3-1]
	\arrow["{\widehat{v}}"', curve={height=24pt}, from=1-1, to=3-1]
	\arrow["\kappa"', shorten <=9pt, shorten >=9pt, Rightarrow, from=3, to=4]
	\arrow["{\kappa'}", shorten <=4pt, shorten >=4pt, Rightarrow, from=0, to=1]
	\arrow["{\exists ! \beta}", shorten <=4pt, shorten >=4pt, Rightarrow, from=1, to=2]
\end{tikzcd}\]
This square exhibits now the equality between the compositions:
\[\begin{tikzcd}
	a \\
	{a'} \\
	{\widehat{b}'} && {b'} && c \\
	{\widehat{b}} && {b'} && c
	\arrow["{v'}"', from=2-1, to=3-1]
	\arrow["\theta"', from=3-1, to=4-1]
	\arrow["u"', from=1-1, to=2-1]
	\arrow["{\widehat{v}}"', curve={height=24pt}, from=2-1, to=4-1]
	\arrow[""{name=0, anchor=center, inner sep=0}, "{\widehat{g}}"', from=4-1, to=4-3]
	\arrow["h"', from=3-3, to=3-5]
	\arrow["h"', from=4-3, to=4-5]
	\arrow[Rightarrow, no head, from=3-5, to=4-5]
	\arrow[""{name=1, anchor=center, inner sep=0}, "{g'}", from=3-1, to=3-3]
	\arrow[Rightarrow, no head, from=3-3, to=4-3]
	\arrow["\beta", shorten <=4pt, shorten >=4pt, Rightarrow, from=1, to=0]
\end{tikzcd}\]
\end{con}

\begin{prop}\label{lemmacnrXiscat}
$\cnr{X}$ is a category.
\end{prop}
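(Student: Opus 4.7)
The plan is to verify three claims about $\cnr{X}$: that composition descends to equivalence classes of corners, that the proposed identity morphisms are two-sided units, and that composition is associative. Independence of the composite from the chosen opcartesian filler has already been established inside Construction \ref{concnrpbsqs}, so these three points are enough.

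For well-definedness on equivalence classes, since the relation is generated by zigzags of 2-cells it suffices to treat a single 2-cell and post- or pre-compose with any corner. For post-composition, given $\alpha: (u,g) \Rightarrow (u', g')$ and $[v,h]$, choose opcartesian fillers $\kappa$ of $(v,g)$ and $\kappa'$ of $(v,g')$; the vertical composite $\kappa' \cdot \alpha$ in $X_1$ fills the corner $(v,g)$, and the opcartesian universal property of $\kappa$ yields a unique comparison square which reads off as the desired 2-cell between the two composite corners. For pre-composition, given $\alpha: (v,h) \Rightarrow (\theta \circ v, h')$ and $[u, g]$, fill $(\theta, \hat{g})$ opcartesianly by $\lambda$ (where $\hat{g}$ is the bottom of the opcartesian filler of $(v,g)$); the horizontal composite of $\lambda$ with $\alpha$, formed in $X_2$, is the required 2-cell between the composites.

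The unit laws are immediate from the two universal examples of opcartesian squares. On the right, the horizontal identity square $s(u)$ fills the corner $(u, 1_a)$ and is opcartesian by the assumption that $s$ is a morphism of opfibrations; its boundary yields $\hat{u} = u$ and $\hat{g} = 1_{a'}$, so $[u,g] \circ [1_a, 1_a] = [u, g]$. On the left, the vertical identity $1_g$ in $X_1$ is opcartesian for trivial reasons and fills $(1_b, g)$ with $\hat{v} = 1_{a'}$ and $\hat{g} = g$, giving $[1_b, 1_b] \circ [u,g] = [u,g]$.

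For associativity, let $f_i = [u_i, g_i]$ for $i = 1, 2, 3$ and pick opcartesian fillers $\kappa$ of $(u_3, g_2)$ (producing $\hat{u}_3, \hat{g}_2$), $\mu$ of $(u_2, g_1)$ (producing $\hat{u}_2, \hat{g}_1$), and $\rho$ of $(\hat{u}_3, \hat{g}_1)$ (producing $u', g'$). In $(f_3 \circ f_2) \circ f_1$ the outer corner to fill is $(\hat{u}_3 \circ u_2, g_1)$; the vertical composite $\rho \cdot \mu$ in $X_1$ fills it and is opcartesian because opcartesian morphisms are closed under composition in the opfibration $d_0$. In $f_3 \circ (f_2 \circ f_1)$ the outer corner is $(u_3, g_2 \circ \hat{g}_1)$; the horizontal composite of $\kappa$ and $\rho$ in $X_2$ fills it and is opcartesian because $d_1$ is a morphism of opfibrations. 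Both computations then read off the same corner $[u' \circ \hat{u}_2 \circ u_1, \, g_3 \circ \hat{g}_2 \circ g']$, and invoking independence from the chosen opcartesian filler closes the argument. The main obstacle is the bookkeeping in associativity: one must correctly decompose each outer corner as either a vertical (in $X_1$) or horizontal (in $X_2$) composite of the three atomic opcartesian fillers, after which the two closure properties of a crossed double category do all the work.
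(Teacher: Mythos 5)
Your proof is correct and follows essentially the same route as the paper's: the horizontal and vertical identity squares serve as the opcartesian fillers for the two unit laws, and associativity is obtained by comparing both bracketings against the ternary composite built from the same three-square grid, invoking closure of opcartesian squares under vertical and horizontal composition. The only difference is that you additionally spell out well-definedness on equivalence classes, which the paper disposes of with a brief remark inside Construction \ref{concnrpbsqs}.
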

\begin{proof}
Let $[f,g]: a \to b$ be a morphism in $\cnr{X}$. To show that $[f,g] \circ [1_a,1_a] = [f,g]$, note that the horizontal identity square on $f$ is by definition opcartesian so we might as well use it for the composite (the composite is independent of the choice) and the result follows.
\[\begin{tikzcd}
	a \\
	a && a \\
	c && c && b
	\arrow[Rightarrow, no head, from=1-1, to=2-1]
	\arrow[""{name=0, anchor=center, inner sep=0}, Rightarrow, no head, from=2-1, to=2-3]
	\arrow["f", from=2-3, to=3-3]
	\arrow["g", from=3-3, to=3-5]
	\arrow["f"', from=2-1, to=3-1]
	\arrow[""{name=1, anchor=center, inner sep=0}, Rightarrow, no head, from=3-1, to=3-3]
	\arrow["f"', curve={height=24pt}, from=1-1, to=3-1]
	\arrow["g"', curve={height=24pt}, from=3-1, to=3-5]
	\arrow[shorten <=4pt, shorten >=4pt, Rightarrow, from=0, to=1]
\end{tikzcd}\]
Analogously $[1_b,1_b] \circ [f,g] = [f,g]$. Consider now a composable triple $[f,g]$, $[f',g']$, $[f'',g'']$ and fill it to form a single corner as depicted below:
\[\begin{tikzcd}
	a \\
	{a'} && b \\
	\bullet && {b'} && c \\
	\bullet && \bullet && {c'} && d
	\arrow["f", from=1-1, to=2-1]
	\arrow[""{name=0, anchor=center, inner sep=0}, "g", from=2-1, to=2-3]
	\arrow["{f'}", from=2-3, to=3-3]
	\arrow[""{name=1, anchor=center, inner sep=0}, "{g'}", from=3-3, to=3-5]
	\arrow["{f''}", from=3-5, to=4-5]
	\arrow["{g''}", from=4-5, to=4-7]
	\arrow[dashed, from=2-1, to=3-1]
	\arrow[""{name=2, anchor=center, inner sep=0}, dashed, from=3-1, to=3-3]
	\arrow[from=3-3, to=4-3]
	\arrow[""{name=3, anchor=center, inner sep=0}, dashed, from=4-3, to=4-5]
	\arrow[dashed, from=3-1, to=4-1]
	\arrow[""{name=4, anchor=center, inner sep=0}, dashed, from=4-1, to=4-3]
	\arrow["{\kappa_1}", shorten <=4pt, shorten >=4pt, Rightarrow, from=0, to=2]
	\arrow["{\kappa_3}", shorten <=4pt, shorten >=4pt, Rightarrow, from=2, to=4]
	\arrow["{\kappa_2}", shorten <=4pt, shorten >=4pt, Rightarrow, from=1, to=3]
\end{tikzcd}\]
Denote this composite corner by $[f'',g''] \circ [f',g'] \circ [f,g]$ and call it the \textit{ternary composite}. Now to define the composition $[f'',g''] \circ [f',g']$, choose the square $\kappa_2$ as above. To define $([f'',g''] \circ [f',g']) \circ [f,g]$, choose the square $\kappa_3 \circ \kappa_1$ (as a vertical composite of opcartesian squares, it is opcartesian). We see that $([f'',g''] \circ [f',g']) \circ [f,g]$ is equal to the ternary composite. By an analogous argument (and using that opcartesian squares are closed under horizontal composites), $[f'',g''] \circ ([f',g'] \circ [f,g])$ also equals this ternary composite and thus composition is associative.

\end{proof}

If $F: X \to Y$ is any double functor between crossed double categories, there is an induced functor $\cnr{F}: \cnr{X} \to \cnr{Y}$ sending:
\begin{equation}\label{mapsto_cnrF}
\begin{tikzcd}
	a &&& Fa \\
	{a'} & b & \mapsto & {Fa'} & Fb
	\arrow["u"', from=1-1, to=2-1]
	\arrow["g", from=2-1, to=2-2]
	\arrow["Fu"', from=1-4, to=2-4]
	\arrow["Fg", from=2-4, to=2-5]
\end{tikzcd}
\end{equation}
This gives us a functor $\cnr{-}: \pscr \to \cat$. 

\begin{pozn}\label{pozn_coddiscrequivrel}
If $X$ is codomain-discrete, note that there is a 2-cell $\beta: (u,g) \Rightarrow (v,h)$ between corners if and only if $u=v$, $g=h$ and $\beta = 1_g$ is the identity square. Thus the category $\cnr{X}$ has corners as morphisms with no equivalence relation involved. This has also been observed in \cite[Corollary 5.4.7]{internalalg}.
\end{pozn}


\color{black}


\begin{pr}\label{prspans}
Let $\cc$ be a category with pullbacks and consider the double category $\text{PbSq}(\cc)^{v}$. The category $\cnr{\text{PbSq}(\cc)^{v}}$ has as objects the objects of $\cc$, while a morphism $a \to b$ is an equivalence class of corners (usually called spans) like this:
\[\begin{tikzcd}
	a \\
	{a'} & b
	\arrow[from=2-1, to=1-1]
	\arrow[from=2-1, to=2-2]
\end{tikzcd}\]
Note that there is a span isomorphism between two spans $(u,g), (v,h)$ if and only if there is a 2-cell between them if we regard them as corners.
\[\begin{tikzcd}
	a &&&&& a \\
	& {a'} && \iff && {a'} && b \\
	{a''} && b &&& {a'} && b
	\arrow["u"', from=2-2, to=1-1]
	\arrow["g", from=2-2, to=3-3]
	\arrow["v", from=3-1, to=1-1]
	\arrow["{\exists \cong}"', from=3-1, to=2-2]
	\arrow["h"', from=3-1, to=3-3]
	\arrow["u"', from=2-6, to=1-6]
	\arrow[""{name=0, anchor=center, inner sep=0}, "g", from=2-6, to=2-8]
	\arrow[Rightarrow, no head, from=2-8, to=3-8]
	\arrow[""{name=1, anchor=center, inner sep=0}, "g"', from=3-6, to=3-8]
	\arrow[from=3-6, to=2-6]
	\arrow["v", curve={height=-24pt}, from=3-6, to=1-6]
	\arrow["\exists", shorten <=4pt, shorten >=4pt, Rightarrow, from=0, to=1]
\end{tikzcd}\]
The composition of corners is defined using pullbacks. In other words, we have:
\[
\cnr{\text{PbSq}(\cc)^{v}} = \text{Span}(\cc).
\]
\end{pr}

\begin{pr}\label{prparc}
Let $\cc$ be a category with pullbacks and consider the double category $\text{MPbSq}(\cc)^{v}$. By a similar reasoning as above we obtain that the category of corners corresponding to this double category is isomorphic to the category $\text{Par}(\cc)$ of \textit{partial maps} in $\cc$, as defined in \cite[p. 246]{restrcats}.
\end{pr}

\begin{pr}\label{prcofun}
Let $\ce$ be a category with pullbacks and consider now the double category $\text{BOFib}(\ce)^{v}$. Morphisms in $\cnr{\text{BOFib}(\ce)^{v}}$ are equivalence classes of spans $(F,G)$ where $F$ is a bijection on objects and $G$ is a discrete opfibration as pictured below:
\[\begin{tikzcd}
	\ca \\
	{\ca'} && \cb
	\arrow["F", from=2-1, to=1-1]
	\arrow["G", from=2-1, to=2-3]
\end{tikzcd}\]
This category of corners is isomorphic to $\text{Cof}(\ce)$, the category of internal categories and \textit{cofunctors}, see for instance \cite[Theorem 18]{bryce_cof}.
\end{pr}

\begin{pr}
If $(L,R)$ is an algebraic weak factorization system on a category $\cc$ with pullbacks, consider the co-crossed double category $R\text{-}\mathbb{A}lg$ of $R$-algebras (Example \ref{pr_awfs_dblcat}). Its vertical dual $R\text{-}\mathbb{A}lg^{v}$ is thus crossed. Its category of corners construction now gives the \textit{category of weak maps} $\boldsymbol{Wk}_l(L,R)$ associated to the system $(L,R)$, see \cite[Section 3.4 and  Remark 13]{awfs2}.
\end{pr}

\begin{pr}
If $X$ is split crossed, our category of corners construction agrees with that of \cite[Corollary 5.4.5]{internalalg}, as is easily verified.

Recall from Example \ref{pr_crosseddblcats} the free symmetric strict monoidal category 2-monad $S$ and the crossed double category $\res{*}$.

The objects of $\cnr{\res{*}}$ are finite ordinals, while a morphism $m \to n$ is an equivalence class of corners consisting of a permutation followed by an order-preserving map:
\[\begin{tikzcd}
	m \\
	m & n
	\arrow["\phi"', from=1-1, to=2-1]
	\arrow["f"', from=2-1, to=2-2]
\end{tikzcd}\]
It can be proven that $\cnr{\res{*}} = \text{FinSet}$, the category of finite ordinals and all functions (see \cite[Theorem 6.3.1]{internalalg}).

If $B$ is the free braided strict monoidal category 2-monad, $\cnr{\res{*}} = \text{Vine}$, the category with objects being natural numbers and morphisms being \textit{vines}, that is, ``braids for which the strings can merge” (see \cite[Theorem 6.3.2]{internalalg}).
\end{pr}
\color{black}

\subsubsection{Some properties of $\cnr{X}$}


The following proposition captures the idea that ``a square in $X$ turns into a commutative square in $\cnr{X}$”:

\begin{prop}\label{prop_naturalityinsquares}
Let $X$ be a crossed double category. We have:
\[\begin{tikzcd}
	a && b &&& a && b \\
	&&& {\text{in } X} & \Rightarrow &&&& {\text{commutes in } \cnr{X}} \\
	c && d &&& c && d
	\arrow[""{name=0, anchor=center, inner sep=0}, "m", from=1-1, to=1-3]
	\arrow["e", from=1-3, to=3-3]
	\arrow["{e'}"', from=1-1, to=3-1]
	\arrow[""{name=1, anchor=center, inner sep=0}, "{m'}"', from=3-1, to=3-3]
	\arrow["{[e',1]}"', from=1-6, to=3-6]
	\arrow["{[1,m]}", from=1-6, to=1-8]
	\arrow["{[e,1]}", from=1-8, to=3-8]
	\arrow["{[1,m']}"', from=3-6, to=3-8]
	\arrow["\exists"', shorten <=9pt, shorten >=9pt, Rightarrow, from=0, to=1]
\end{tikzcd}\]
\end{prop}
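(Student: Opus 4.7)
The plan is to unpack both composites in $\cnr{X}$ using the definition from Construction \ref{concnrpbsqs}, and to produce a single 2-cell (in the sense of Definition \ref{defi2cell}) witnessing that their representing corners lie in the same equivalence class.

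First I would compute the right-hand composite $[1_c, m'] \circ [e', 1_c]$. The middle corner to be filled is the trivial corner $(1_c, 1_c)$ at $c$; by Definition \ref{deficrossed} every horizontal identity square is opcartesian, so one may use $s(1_c)$ as the filler, and the composite collapses to $[e', m']$.

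Next, for the left-hand composite $[e, 1_d] \circ [1_a, m]$, the middle corner to be filled is the top-right corner $(m, e)$ consisting of horizontal $m : a \to b$ followed by vertical $e : b \to d$. Fix any opcartesian filler $\kappa$ for this corner, with left side $\widetilde{e} : a \to \widetilde{d}$ and bottom side $\widetilde{m} : \widetilde{d} \to d$; the composite is then $[\widetilde{e}, \widetilde{m}]$. It therefore suffices to show $[\widetilde{e}, \widetilde{m}] = [e', m']$ in $\cnr{X}$.

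This is where the given square $\alpha$ enters. Both $\alpha$ and $\kappa$ share the same top $m$ and the same right $e$, while $\alpha$ has bottom $m'$ and left $e'$. Applying the universal property \eqref{eqopcartesiansquare} of the opcartesian square $\kappa$ to $\alpha$, with the trailing vertical morphism taken to be $v = 1_d$, yields a unique vertical morphism $\theta : \widetilde{d} \to c$ satisfying $\theta \circ \widetilde{e} = e'$ together with a unique square $\beta$ of shape top $\widetilde{m}$, right $1_d$, left $\theta$, bottom $m'$ such that $\alpha$ factors vertically as $\beta \cdot \kappa$. The identity $\theta \circ \widetilde{e} = e'$ is exactly the compatibility condition of Definition \ref{defi2cell}, so $\beta$ is a 2-cell $(\widetilde{e}, \widetilde{m}) \Rightarrow (e', m')$. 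By construction of the equivalence relation on corners, this forces $[\widetilde{e}, \widetilde{m}] = [e', m']$, and the square commutes.

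I do not anticipate a serious obstacle: the argument is a direct unwinding of the definitions, and the one non-trivial ingredient, namely the factorization of $\alpha$ through $\kappa$, is furnished immediately by opcartesianness. The only point that needs care is choosing the auxiliary vertical morphism in the universal property to be $v = 1_d$, so that the resulting $\beta$ has precisely the shape required to serve as a 2-cell between the two relevant corners.
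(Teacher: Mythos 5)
Your proof is correct and follows essentially the same route as the paper's: compute the composite $[e,1]\circ[1,m]$ via a chosen opcartesian filler $\kappa$ of the corner $(m,e)$, then apply the universal property of $\kappa$ to the given square $\alpha$ with trailing vertical morphism $1_d$ to obtain the 2-cell $\beta$ identifying the resulting corner with $(e',m')$. The only difference is that you spell out explicitly why $[1,m']\circ[e',1]=[e',m']$ (via the horizontal identity filler), a step the paper leaves implicit.
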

\begin{proof}
Denote $[u,g] := [e,1] \circ [1.m]$ and denote by $\kappa$ the opcartesian square we used for this composition. From opcartesianness there is a unique square $\beta$ such that:
\[\begin{tikzcd}
	a && b && a &&& b \\
	{\widehat{b}'} && d & {=} \\
	c && d && c &&& d
	\arrow[""{name=0, anchor=center, inner sep=0}, "m", from=1-1, to=1-3]
	\arrow["e", from=1-3, to=2-3]
	\arrow["u"', from=1-1, to=2-1]
	\arrow[""{name=1, anchor=center, inner sep=0}, "g"{description}, from=2-1, to=2-3]
	\arrow[""{name=2, anchor=center, inner sep=0}, "{m'}"', from=3-1, to=3-3]
	\arrow[Rightarrow, no head, from=2-3, to=3-3]
	\arrow[""{name=3, anchor=center, inner sep=0}, "m", from=1-5, to=1-8]
	\arrow["e", from=1-8, to=3-8]
	\arrow["{e'}"', from=1-5, to=3-5]
	\arrow[""{name=4, anchor=center, inner sep=0}, "{m'}"', from=3-5, to=3-8]
	\arrow["\theta"', from=2-1, to=3-1]
	\arrow["{e'}"', curve={height=24pt}, from=1-1, to=3-1]
	\arrow["\alpha"', shorten <=9pt, shorten >=9pt, Rightarrow, from=3, to=4]
	\arrow["\kappa", shorten <=4pt, shorten >=4pt, Rightarrow, from=0, to=1]
	\arrow["{\exists ! \beta}", shorten <=4pt, shorten >=4pt, Rightarrow, from=1, to=2]
\end{tikzcd}\]
This square $\beta$ now exhibits the equality $[1,m'] \circ [e',1] = [e',m'] = [u,g] = [e,1] \circ [1,m]$.

\end{proof}

The additional assumption requiring that every square is opcartesian simplifies the description of $\cnr{X}$ for a crossed double category $X$:

\begin{lemma}\label{lemmaXpscr_everysquareopcart}
Let $X$ be a crossed double category in which every square is opcartesian. Then any 2-cell $\beta: [e,m] \Rightarrow [e',m']$ between corners is vertically invertible. In particular two corners in $\cnr{X}$ are equivalent if and only if there exists a single (vertically invertible) 2-cell between them.
\end{lemma}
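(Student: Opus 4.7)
The plan is to exploit the hypothesis that every square of $X$ is opcartesian by treating $\beta$ itself, regarded as a morphism $m \to m'$ in $X_1$, as an opcartesian lift of $1_b \in X_0$ along the codomain functor $d_0$. The vertical invertibility will then follow from the standard fibrational fact that an opcartesian lift of an isomorphism is itself invertible, applied here to the isomorphism $1_b$.

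Concretely, I would first recall that $\beta$ is a square with top $m$, bottom $m'$, right $1_b$ and left $\theta \colon a' \to a''$ satisfying $\theta \circ e = e'$. To construct a one-sided inverse, I would apply opcartesianness of $\beta$ to the vertical identity square $1_m$, whose right edge $1_b$ factors trivially as $1_b \circ 1_b$. This produces a unique square $\beta^{-1}$ with top $m'$, bottom $m$, right $1_b$ and left some $\psi \colon a'' \to a'$, such that the vertical composite $\beta^{-1} \circ_v \beta$ (with $\beta$ on top) equals $1_m$. Reading off left edges gives $\psi \circ \theta = 1_{a'}$.

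For the other equation I would re-invoke opcartesianness of $\beta$, now applied to the square $\beta$ itself (with top $m$ and right $1_b = 1_b \circ 1_b$). Two candidate squares $X$ with top $m'$ and right $1_b$ satisfy $X \circ_v \beta = \beta$: trivially $X = 1_{m'}$, and also $X = \beta \circ_v \beta^{-1}$, which works by associativity and the one-sided inverse relation already obtained. Uniqueness of the opcartesian factorization forces $\beta \circ_v \beta^{-1} = 1_{m'}$, so $\theta \circ \psi = 1_{a''}$, and $\theta$ is an isomorphism in $X_0$ with inverse $\psi$.

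For the ``in particular'' statement, the inverse square $\beta^{-1}$ is automatically a 2-cell $(e',m') \Rightarrow (e,m)$, since $\theta^{-1} \circ e' = \theta^{-1} \circ \theta \circ e = e$. Hence every backwards leg of a zigzag of 2-cells may be flipped, and the resulting chain of forward 2-cells composes vertically into a single 2-cell; the 2-cell condition is preserved under vertical composition by an immediate telescope $\theta_n \circ \cdots \circ \theta_1 \circ e_0 = e_n$. The only mildly delicate point in the argument is the bookkeeping of the conventions in~\eqref{eqopcartesiansquare} --- one must identify $1_b$ simultaneously as the right edge $u$ of the opcartesian square $\beta$ and as the further factor $v$ in both applications of opcartesianness --- but once the setup is correct, both invocations are immediate.
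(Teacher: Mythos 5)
Your proposal is correct and takes essentially the same route as the paper: both produce a one-sided inverse square $\gamma$ with $\gamma \circ \beta = 1_m$ and then use the uniqueness clause in the opcartesianness of $\beta$ to force $\beta \circ \gamma = 1_{m'}$. If anything, your version is slightly cleaner in attributing the first step to opcartesianness of $\beta$ applied to $1_m$, and you additionally spell out the ``in particular'' clause (inverses and vertical composites of 2-cells are again 2-cells), which the paper leaves implicit.
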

\begin{proof}
Consider a square $\beta$ as pictured below. Since the vertical identity square $1_g$ on the morphism $g$ is opcartesian, there exists a unique square $\gamma$ such that:
\[\begin{tikzcd}
	a && b && a && b \\
	c && b & {=} \\
	a && b && a && b
	\arrow["u"', from=1-1, to=2-1]
	\arrow[""{name=0, anchor=center, inner sep=0}, "g", from=1-1, to=1-3]
	\arrow[Rightarrow, no head, from=1-3, to=2-3]
	\arrow[""{name=1, anchor=center, inner sep=0}, "h"{description}, from=2-1, to=2-3]
	\arrow["v"', from=2-1, to=3-1]
	\arrow[Rightarrow, no head, from=2-3, to=3-3]
	\arrow[""{name=2, anchor=center, inner sep=0}, "g"', from=3-1, to=3-3]
	\arrow["g", from=1-5, to=1-7]
	\arrow[Rightarrow, no head, from=1-7, to=3-7]
	\arrow[Rightarrow, no head, from=1-5, to=3-5]
	\arrow["g"', from=3-5, to=3-7]
	\arrow[curve={height=24pt}, Rightarrow, no head, from=1-1, to=3-1]
	\arrow["{\exists ! \gamma}", shorten <=4pt, shorten >=4pt, Rightarrow, from=1, to=2]
	\arrow["\beta", shorten <=4pt, shorten >=4pt, Rightarrow, from=0, to=1]
\end{tikzcd}\]
Thus $\gamma \circ \beta = 1_g$. Post-composing with $\beta$, we get:
\[
\beta \circ \gamma \circ \beta = 1_h \circ \beta
\]
Since $\beta$ is opcartesian, we get $\beta \circ \gamma = 1_h$ as well.
\end{proof}


\begin{nota}\label{notecnrX_verticalhorizontalcnrs}
Given a crossed double category $X$, denote by $\ce_X$ the class of corners in $\cnr{X}$ of form $[f,1_b]$ and $\cm_X$ the class of corners of form $[1_a, g]$. We will call these \textit{vertical} and \textit{horizontal} corners.
\end{nota}

\begin{prop}\label{THMcancprops_weakorthog}
Let $X$ be a crossed double category. Then the class $\ce_X$ has the right cancellation property. Both classes $\ce_X, \cm_X$ contain all identities and are closed under composition. We also have:
\[
\cnr{X} = \cm_X \circ \ce_X.
\]
Moreover, if every square is opcartesian in $X$, we have that $\ce_X$ is weakly orthogonal to $\cm_X$:
\[
\ce_X \boxslash \cm_X.
\]
\end{prop}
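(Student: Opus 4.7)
The plan is to handle the four assertions in the order they are stated, each requiring a different technique.

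First I verify that $\ce_X$ and $\cm_X$ contain identities and are closed under composition. The identity $1_a = [1_a, 1_a]$ lies in both classes. For closure in $\cm_X$, computing $[1, g'] \circ [1, g]$ requires filling the middle corner $(g, 1_b)$ with an opcartesian square; the vertical identity square $1_g$ (the identity on $g$ in $X_1$) serves, producing $[1, g' \circ g]$. For closure in $\ce_X$, computing $[v, 1] \circ [u, 1]$ requires filling the corner $(1_{a'}, v)$, and the horizontal identity square $s(v)$ — opcartesian by the very definition of a crossed double category — does the job, producing $[v \circ u, 1]$. The factorization $\cnr{X} = \cm_X \circ \ce_X$ is then immediate from the computation $[u, g] = [1, g] \circ [u, 1]$, obtained by filling the trivial middle corner $(1_{a'}, 1_{a'})$ with an identity square.

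For right cancellation, suppose $[v, h] \circ [u, g] \in \ce_X$ and $[u, g] \in \ce_X$. Choose a representative $(u', 1)$ for $[u, g]$; the composite simplifies to $[v \circ u', h]$ using the opcartesian $s(v)$, so the hypothesis becomes an equivalence $(v u', h) \sim (w, 1)$ via a zigzag of $2$-cells. The strategy is to transport this zigzag to one starting at $(v, h)$: a forward $2$-cell $\beta : (v u', h) \Rightarrow (w, 1)$ with left edge $\theta$ satisfying $\theta \circ v u' = w$ serves, without modification, as a $2$-cell $(v, h) \Rightarrow (\theta \circ v, 1)$, because the required identity $\theta \circ v = \theta \circ v$ is tautological. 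In general I proceed by induction on the length of the zigzag; the main technical obstacle is handling backward $2$-cells, where the transport is not automatic and must exploit the opcartesian structure to build a compatible replacement at each step.

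Finally, for weak orthogonality under the extra assumption that every square is opcartesian, the key input is Lemma \ref{lemmaXpscr_everysquareopcart}, which says that two corners are equivalent iff a single \emph{vertically invertible} $2$-cell connects them. Given a commuting square $[1, g] \circ [p_1, p_2] = [q_1, q_2] \circ [u, 1]$ with $[u, 1] \in \ce_X$ and $[1, g] \in \cm_X$, computing both sides by the composition formula yields the equivalence $(p_1, g \circ p_2) \sim (q_1 \circ u, q_2)$, witnessed by a single invertible $2$-cell $\beta$ whose underlying square has top $g \circ p_2$, bottom $q_2$, right edge $1_{c'}$, left edge $\theta$, and satisfies $\theta \circ p_1 = q_1 \circ u$. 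The vertical invertibility of $\beta$ forces $\theta$ to be an invertible vertical morphism. I then propose the diagonal filler $[\theta^{-1} \circ q_1, p_2]$: composed with $[u, 1]$ on the right this yields $(\theta^{-1} q_1 u, p_2) = (p_1, p_2)$ strictly (using $\theta^{-1} \theta p_1 = p_1$), while composition with $[1, g]$ on the left produces $(\theta^{-1} q_1, g \circ p_2)$, which is related to $(q_1, q_2)$ by the same square $\beta$ reinterpreted as a $2$-cell (its left edge $\theta$ indeed satisfies $\theta \circ \theta^{-1} q_1 = q_1$).
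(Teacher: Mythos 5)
Your treatment of identities, closure under composition, the factorization $\cnr{X} = \cm_X \circ \ce_X$, and the weak orthogonality claim is correct and follows essentially the same route as the paper. The composition formulas you use (the vertical identity square $1_g$ for composing horizontal corners, the horizontal identity square $s(v)$ for vertical ones, both opcartesian) are the right ones. For weak orthogonality you invoke Lemma \ref{lemmaXpscr_everysquareopcart} to obtain a single vertically invertible $2$-cell $\beta$ with invertible left edge $\theta$ and build the filler $[\theta^{-1}q_1,p_2]$ from it; the paper does the same after first reducing the lifting problem to comparing two $(\ce_X,\cm_X)$-factorizations of a single morphism, where the filler is $[\theta,1]$. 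In both cases the triangles are verified by reinterpreting $\beta$ as a $2$-cell whose left-edge condition holds tautologically, which is exactly the mechanism the paper uses.

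The genuine gap is in the right cancellation property. You correctly isolate the key trick: a square $\beta$ giving a $2$-cell $(vu',h)\Rightarrow(w,1)$ with left edge $\theta$ is, verbatim, a $2$-cell $(v,h)\Rightarrow(\theta v,1)$, since the required identity $\theta v=\theta v$ is vacuous. This single forward $2$-cell case is in fact the entirety of the paper's argument. But you then announce an induction on the length of the zigzag and explicitly defer the backward step to an unspecified ``compatible replacement'' built from ``the opcartesian structure''. No such replacement is constructed, and it is not obvious one exists: a backward $2$-cell $(u_1,g_1)\Rightarrow(vu',h)$ has left edge $\theta_1$ satisfying $\theta_1 u_1=vu'$, and since $u_1$ need not factor through $u'$ there is no evident corner out of the codomain of $u'$ to which it transports. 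As written, your proof of cancellation is therefore incomplete at exactly the step you flagged as the main obstacle; having chosen to set up the induction explicitly, you owe its inductive step (or an argument that the equivalence $(vu',h)\sim(w,1)$ can always be witnessed by a single forward $2$-cell, which is what the paper's proof implicitly assumes).
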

\begin{proof}
To show the cancellation property, assume $[s,t] \circ [e,1] = [u,1] \in \ce_X$. We then have a square as pictured below left:
\[\begin{tikzcd}
	a \\
	b &&&&& b \\
	{b'} && c &&& {a'} && c \\
	c && c &&& c && c
	\arrow["s", from=2-1, to=3-1]
	\arrow[""{name=0, anchor=center, inner sep=0}, "t", from=3-1, to=3-3]
	\arrow[""{name=1, anchor=center, inner sep=0}, Rightarrow, no head, from=4-1, to=4-3]
	\arrow[Rightarrow, no head, from=3-3, to=4-3]
	\arrow["e", from=1-1, to=2-1]
	\arrow["\theta"', from=3-1, to=4-1]
	\arrow["u"', curve={height=24pt}, from=1-1, to=4-1]
	\arrow["\theta"', from=3-6, to=4-6]
	\arrow[""{name=2, anchor=center, inner sep=0}, Rightarrow, no head, from=4-6, to=4-8]
	\arrow[""{name=3, anchor=center, inner sep=0}, "t", from=3-6, to=3-8]
	\arrow[Rightarrow, no head, from=3-8, to=4-8]
	\arrow["s", from=2-6, to=3-6]
	\arrow["{\theta s}"', curve={height=24pt}, from=2-6, to=4-6]
	\arrow["\beta", shorten <=4pt, shorten >=4pt, Rightarrow, from=0, to=1]
	\arrow["\beta", shorten <=4pt, shorten >=4pt, Rightarrow, from=3, to=2]
\end{tikzcd}\]
The same square $\beta$ now exhibits the equality $[s,t] = [\theta s, 1]$ (pictured above right) and thus $[s,t] \in \ce_X$. The fact that $\ce_X, \cm_X$ contain identities and are closed under composition, as well as the fact that $\cnr{X} = \cm_X \circ \ce_X$ are obvious.

Assume now that every square is opcartesian in $X$. Since $\cnr{X} = \cm_X \circ \ce_X$, to prove weak orthogonality it suffices to show that given two factorizations $[e,m] =  [e',m']$ of the same morphism, there exists a morphism of factorizations between them, i.e.:
\[\begin{tikzcd}
	a && {a'} && b \\
	a && {a''} && b
	\arrow["{[e,1]}", from=1-1, to=1-3]
	\arrow["{[1,m]}", from=1-3, to=1-5]
	\arrow["{[e',1]}"', from=2-1, to=2-3]
	\arrow["{[1,m']}"', from=2-3, to=2-5]
	\arrow[Rightarrow, no head, from=1-1, to=2-1]
	\arrow[Rightarrow, no head, from=1-5, to=2-5]
	\arrow[dashed, from=1-3, to=2-3]
\end{tikzcd}\]
Since $[e,m] = [e',m']$ and thanks to Lemma \ref{lemmaXpscr_everysquareopcart}, there exists a single invertible square like this:
\[\begin{tikzcd}
	a \\
	{a'} && b \\
	{a''} && b
	\arrow["{e'}"', curve={height=24pt}, from=1-1, to=3-1]
	\arrow["e"', from=1-1, to=2-1]
	\arrow[""{name=0, anchor=center, inner sep=0}, "m", from=2-1, to=2-3]
	\arrow["\theta"', from=2-1, to=3-1]
	\arrow[""{name=1, anchor=center, inner sep=0}, "{m'}"', from=3-1, to=3-3]
	\arrow[Rightarrow, no head, from=2-3, to=3-3]
	\arrow["\beta", shorten <=4pt, shorten >=4pt, Rightarrow, from=0, to=1]
\end{tikzcd}\]
It's now easy to verify that the corner $[\theta, 1]$ makes both squares in the above diagram commute.
\end{proof}

The classes $(\ce_X,\cm_X)$ give rise to an ordinary weak factorization system $(\widetilde{\ce_X},\widetilde{\cm_X})$ for which the first class is obtained by closing $\ce_X$ under codomain-retracts and the second is obtained from $\cm_X$ by closing it under domain retracts. This is a well-known result so we omit its proof.


\color{black}

\begin{pr}\label{pr_spansWFS}
Recall the example $\cnr{\pbsq{\cc}^{v}} = \spn{\cc}$. Since every square is opcartesian (a pullback), we obtain that the class $\ce_{\pbsq{\cc}^{v}}$ is weakly orthogonal to $\cm_{\pbsq{\cc}^{v}}$. Note that in this case, both classes are already closed under the required retracts. We obtain:

\begin{prop}\label{prop_wfsonspans}
The two canonical classes of morphisms in the category $\spn{\cc}$ form a weak factorization system.
\end{prop}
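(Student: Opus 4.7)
The starting point is Proposition \ref{THMcancprops_weakorthog} applied to $X = \pbsq{\cc}^{v}$: because every square in $\pbsq{\cc}$, and hence in its vertical dual, is a pullback square and therefore opcartesian, that proposition supplies the factorization $\spn{\cc} = \cm_X \circ \ce_X$ together with the weak orthogonality $\ce_X \boxslash \cm_X$. Under the identification $\cnr{\pbsq{\cc}^{v}} = \spn{\cc}$ from Example \ref{prspans}, the class $\ce_X$ consists exactly of the spans with invertible right leg and $\cm_X$ of the spans with invertible left leg; the canonical factorization of a span $\alpha = [u,g] : a \to b$ with apex $a'$ is $\alpha = [1_{a'}, g] \circ [u, 1_{a'}]$.

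To promote $(\ce_X, \cm_X)$ to a genuine weak factorization system, I plan to prove directly that $\ce_X$ coincides with the class of morphisms having the left lifting property against every element of $\cm_X$ (the dual identity for $\cm_X$ is analogous). One inclusion is already the content of $\ce_X \boxslash \cm_X$. For the reverse inclusion, take $\alpha = [u,g]$ with this lifting property and apply it to its own factorization: a lift $\ell : b \to a'$ against $[1_{a'}, g] \in \cm_X$ exists. After choosing a representative of $\ell$ using the lower-triangle equation $[1_{a'}, g] \circ \ell = 1_b$, it takes the form $\ell = [1_b, g_\ell]$ with $g_\ell$ a section of $g$ in $\cc$. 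Expanding the upper-triangle equation $\ell \circ \alpha = [u, 1_{a'}]$ via the trivial pullback of $g$ along $1_b$ then forces the composite $g_\ell \circ g : a' \to a'$ to be an isomorphism; combined with $g \circ g_\ell = 1_b$ this same composite is idempotent, so it must equal $1_{a'}$. Hence $g_\ell$ is a two-sided inverse of $g$, and $\alpha \in \ce_X$.

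Combining the factorization from Proposition \ref{THMcancprops_weakorthog} with the two lifting characterizations then yields the weak factorization system on $\spn{\cc}$. The main obstacle I anticipate is the careful bookkeeping with equivalence classes of spans: equalities such as $[1_{a'}, g] \circ \ell = 1_b$ only hold up to a span isomorphism between apexes, so representatives must be chosen judiciously in order to extract honest equations in $\cc$ at exactly the right moment. The crucial algebraic input is the elementary fact that an idempotent isomorphism is the identity, which is what converts the one-sided section produced by the lift into two-sided invertibility of $g$.
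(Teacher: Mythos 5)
Your proposal is correct and follows the same route as the paper: both start from Proposition \ref{THMcancprops_weakorthog} applied to $X=\pbsq{\cc}^{v}$ (every square is a pullback, hence opcartesian) and then upgrade the weakly orthogonal, factorizing pair $(\ce_X,\cm_X)$ to a genuine weak factorization system. The only difference is one of detail: where the paper merely asserts that both classes are already closed under the required retracts, you carry out the retract argument explicitly --- lifting a map against its own factorization, normalizing representatives of spans, and concluding via the observation that an idempotent isomorphism is the identity --- thereby substantiating the step the paper leaves to the reader.
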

\end{pr}


\color{black}

\section{Factorization systems and double categories}\label{sekcefsdbl}

In this section we will be putting additional hypotheses on the crossed double category $X$ to ensure that the classes $(\ce_X,\cm_X)$ of morphisms in $\cnr{X}$ have more desirable properties (namely, form a strict or an orthogonal factorization system). This gives us the direction:
\[
\text{double categories } \rightsquigarrow \text{ factorization systems.}
\]
For the opposite direction, we introduce a construction that sends two classes $(\ce,\cm)$ of morphisms in a category $\cc$ to a certain double category $D_{\ce,\cm}$ of commutative squares.

In Subsection \ref{subsekce_sfs} we then show that the mappings $X \mapsto (\ce_X, \cm_X)$ and \\$(\ce,\cm) \mapsto D_{\ce,\cm}$ induce an equivalence between the categories of strict factorization systems and codomain-discrete double categories.

In Subsection \ref{subsekce_ofs} we prove analogous results for the categories of orthogonal factorization systems and  \textit{factorization double categories}: a symmetric variant of crossed double categories whose bottom-left corners satisfy a certain joint monicity property.



\subsection{Strict factorization systems}\label{subsekce_sfs}


In \cite{distrlaws} it has been shown that distributive laws in $\spanc$ can equivalently be described as strict factorization systems. Given a codomain-discrete double category $X$, the category of corners $\cnr{X}$ can be constructed using a distributive law in $\spanc(\cat)$\footnote{This is the original construction of $\cnr{X}$ for a codomain-discrete double category $X$ in \cite{internalalg}} - this gives a first hint that there is a relationship between double categories and factorization systems.

\begin{defi}
A \textit{strict factorization system} $(\ce,\cm)$ on a category $\cc$ consists of two wide sub-categories $\ce, \cm \subseteq \cc$ such that for every morphism $f \in \cc$ there exist unique $e \in \ce, m \in \cm$ such that: $f = m \circ e$.
\end{defi}


\begin{defi}\label{defi_SFS}
Denote by $\cs \cf \cs$ the category whose:
\begin{itemize}
\item objects are strict factorization systems $\ce \subseteq \cc \supseteq \cm$,
\item a morphism $(\ce \subseteq \cc \supseteq \cm) \to (\ce' \subseteq \cc' \supseteq \cm')$ is a functor $F: \cc \to \cc'$ satisfying:
\begin{align*}
F(\ce) &\subseteq \ce',\\
F(\cm) &\subseteq \cm'.
\end{align*}
\end{itemize}
\end{defi}

\begin{lemma}\label{lemma_coddiscrimpliessfs}
Let $X$ be codomain-discrete. Then the classes $(\ce_X,\cm_X)$ form a strict factorization system on $\cnr{X}$. The assignment $X \mapsto (\ce_X,\cm_X)$ induces a functor $\coddiscr \to \sfs$.
\end{lemma}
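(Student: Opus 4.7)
The plan is to reduce the statement to Proposition \ref{THMcancprops_weakorthog} together with Remark \ref{pozn_coddiscrequivrel}. From the former, $\ce_X$ and $\cm_X$ are already known to be wide subcategories of $\cnr{X}$ closed under composition, and every morphism of $\cnr{X}$ can be written as $[1, m] \circ [e, 1]$ with $[e,1] \in \ce_X$ and $[1,m] \in \cm_X$; so what remains is uniqueness of such a factorization and functoriality of the assignment.

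The key computation is that in a codomain-discrete $X$ one has $[1_b, m] \circ [e, 1_b] = [e, m]$ for every composable pair $e, m$. By Construction \ref{concnrpbsqs}, this composite is obtained by filling the middle corner with top horizontal leg $1_b$ and right vertical leg $1_b$ by an opcartesian square; codomain-discreteness (Definition \ref{deficoddiscr}) forces this filler to be unique, and the horizontal identity square $s(1_b)$ already fits, with both of its new legs equal to $1_b$. Thus the composite corner is $(1_b \circ e,\, m \circ 1_b) = (e, m)$. Existence of a factorization of $[u, g]: a \to b$ with $u: a \to a'$, $g: a' \to b$ is then immediate from $[u, g] = [1_{a'}, g] \circ [u, 1_{a'}]$. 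Uniqueness follows because, if $[u, g] = [1, m] \circ [e, 1]$, the above computation gives $[u, g] = [e, m]$, and by Remark \ref{pozn_coddiscrequivrel} two corners in a codomain-discrete double category represent the same morphism of $\cnr{X}$ only when they coincide, so $u = e$ and $g = m$.

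For functoriality, any double functor $F: X \to Y$ between codomain-discrete double categories preserves identities, so the induced functor $\cnr{F}$ described in \eqref{mapsto_cnrF} sends $[u, 1]$ to $[Fu, 1]$ and $[1, g]$ to $[1, Fg]$. Hence $\cnr{F}(\ce_X) \subseteq \ce_Y$ and $\cnr{F}(\cm_X) \subseteq \cm_Y$, making $\cnr{F}$ a morphism in $\sfs$. Combined with functoriality of $\cnr{-}$ (established after Proposition \ref{lemmacnrXiscat}), this produces the required functor $\coddiscr \to \sfs$.

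The main conceptual obstacle would ordinarily be the uniqueness clause: in a general crossed double category, two distinct corners can represent the same morphism via a non-trivial zigzag of $2$-cells, so strict uniqueness is hopeless. Here the difficulty is dissolved entirely by Remark \ref{pozn_coddiscrequivrel}, which says that the equivalence relation on corners collapses to equality under codomain-discreteness, together with the uniqueness-of-fillers clause in Definition \ref{deficoddiscr} that makes the middle-corner computation deterministic.
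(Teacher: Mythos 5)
Your proof is correct and follows essentially the same route as the paper: the paper's proof likewise reduces everything to Remark \ref{pozn_coddiscrequivrel} (corners are equal in $\cnr{X}$ iff they coincide on the nose) and the observation that $[u,g]=[1,g]\circ[u,1]$, with existence and closure properties already supplied by Proposition \ref{THMcancprops_weakorthog}. You merely spell out the middle-corner computation $[1,m]\circ[e,1]=[e,m]$ that the paper leaves implicit, which is a welcome bit of extra detail rather than a deviation.
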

\begin{proof}
Recall from Remark \ref{pozn_coddiscrequivrel} that for such $X$, two morphisms $[u,g], [v,h]$ in $\cnr{X}$ are equal if and only if $u = v, g = h$. From this it follows that the factorization $[u,g] = [1,g] \circ [u,1]$ is unique.

If $H: X \to Y$ is a double functor, the induced functor $\cnr{H}: \cnr{X} \to \cnr{Y}$ (see \eqref{mapsto_cnrF}) satisfies $\cnr{H}(\ce_X) \subseteq \ce_Y$, $\cnr{H}(\cm_X) \subseteq \cm_Y$ and thus is a morphism in $\sfs$.
\end{proof}

Denote the above functor simply by $\cnr{-}: \coddiscr \to \sfs$.

\color{black}
\begin{pr}
Let $\ca, \cb$ be categories and $X_{\ca,\cb}$ the codomain-discrete double category from Example \ref{pr_atimesbdblcat}. The category of corners $\cnr{X_{\ca,\cb}}$ is isomorphic to just $\ca \times \cb$ and this category admits a strict factorization system $(\ce,\cm)$, where:
\begin{align*}
\ce &:= \{ (1_a, f) | a \in \ca, f \in \cb \},\\
\cm &:= \{ (g, 1_b) | g \in \ca, b \in \cb \}.
\end{align*}
\end{pr}
\color{black}

\begin{con}\label{conDcecm}
Let $(\ce,\cm)$ be two classes of morphisms in a category $\cc$, both closed under composition and containing all identities. We define a double category $D_{\ce,\cm}$ as follows:
\begin{itemize}
\item The objects are the objects of $\cc$,
\item the category of objects and vertical morphisms is $\ce$,
\item the category of objects and horizontal morphisms is $\cm$,
\item the squares are commutative squares in $\cc$.
\end{itemize}

If we have two classes $(\ce,\cm), (\ce',\cm')$ on categories $\cc, \cc'$ and $F: \cc \to \cc'$ a functor satisfying $F(\ce) \subseteq \ce'$ and $F(\cm) \subseteq \cm'$, there is an induced double functor $D_F : D_{\ce,\cm} \to D_{\ce',\cm'}$ defined in the obvious way.
\end{con}

\begin{lemma}\label{lemma_sfs_to_coddiscr}
Let $(\ce,\cm)$ be a strict factorization system on a category $\cc$. Then $D_{\ce,\cm}$ is codomain-discrete. The assignment $(\ce,\cm) \mapsto D_{\ce,\cm}$ induces a functor $\sfs \to \coddiscr$.
\end{lemma}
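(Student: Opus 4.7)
The plan is to unpack the definition of codomain-discreteness and reduce the unique filling property directly to the unique $(\ce,\cm)$-factorization of a composite.

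First I would describe the structure of a top-right corner in $D_{\ce,\cm}$. Such a corner consists of a horizontal morphism $g : a \to b$ (i.e.\ $g \in \cm$) followed by a vertical morphism $u : b \to c$ (i.e.\ $u \in \ce$). Filling this corner into a square means producing a vertical $e : a \to a'$ (so $e \in \ce$), a horizontal $m : a' \to c$ (so $m \in \cm$), and a commuting square $u \circ g = m \circ e$ in $\cc$. Since squares in $D_{\ce,\cm}$ are by definition commutative squares in $\cc$ and there are no extra data to choose, unique filling is equivalent to saying that the composite $u \circ g : a \to c$ admits a unique factorization $m \circ e$ with $e \in \ce$ and $m \in \cm$. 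This is precisely the defining condition of the strict factorization system $(\ce,\cm)$, so the filler exists and is unique. Hence $D_{\ce,\cm}$ is codomain-discrete.

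Next I would verify functoriality of the assignment. Given a morphism $F : (\ce,\cm) \to (\ce',\cm')$ in $\sfs$, Construction \ref{conDcecm} already supplies a double functor $D_F : D_{\ce,\cm} \to D_{\ce',\cm'}$: on objects and morphisms it acts as $F$ (which preserves $\ce, \cm$ by assumption, hence sends vertical morphisms to vertical and horizontal to horizontal), and on squares it sends a commutative square in $\cc$ to its image under $F$, which is again commutative in $\cc'$. Composition and identities are preserved because they are inherited from $\cc$ and $F$ is a functor. Strict functoriality of $(\ce,\cm) \mapsto D_{\ce,\cm}$ is immediate: $D_{1_\cc} = 1_{D_{\ce,\cm}}$ and $D_{G \circ F} = D_G \circ D_F$ since both sides agree on objects, horizontal and vertical morphisms, and squares.

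No step here is a real obstacle; the statement is essentially bookkeeping once one notices that top-right corners in $D_{\ce,\cm}$ are literally the composites $u \circ g$ to be factored and that squares carry no data beyond commutativity. The only point worth being careful about is to note explicitly that in $D_{\ce,\cm}$ the filler square is uniquely determined not only by $(e,m)$ but as an element of $X_1$, which holds because $D_{\ce,\cm}$ is flat by construction (squares are just commutative squares in $\cc$).
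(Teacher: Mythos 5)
Your proof is correct and follows the same route as the paper: codomain-discreteness of $D_{\ce,\cm}$ is read off directly from the unique $(\ce,\cm)$-factorization of the composite $u\circ g$ of a top-right corner, and functoriality is inherited from Construction \ref{conDcecm}. Your explicit remark that the filler square carries no data beyond its boundary (flatness of $D_{\ce,\cm}$) is a point the paper leaves implicit, but the argument is the same.
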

\begin{proof}
Every morphism in $\cc$ of form $e \circ m$ can be uniquely factored as $m' \circ e'$ with $e' \in \ce, m' \in \cm$. But this precisely means that $D_{\ce,\cm}$ is codomain-discrete.

In the above construction we have seen that $(\ce,\cm) \mapsto D_{\ce,\cm}$ is functorial, the rest is now obvious.
\end{proof}

\begin{theo}\label{thmekvivalence_sfs_coddiscr}
The functor $\cnr{-}: \coddiscr \to \sfs$ is the equivalence inverse to the functor $D: \sfs \to \coddiscr$ and so we have:
\[
\sfs \simeq \text{CodDiscr}.
\]
\end{theo}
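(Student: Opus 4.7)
The plan is to build natural isomorphisms $\epsilon:\cnr{-}\circ D\Rightarrow 1_{\sfs}$ and $\eta:1_{\coddiscr}\Rightarrow D\circ\cnr{-}$, both identity on underlying objects. The key observation is that $\cnr{X}$ for $X\in\coddiscr$ admits the explicit description of Remark \ref{pozn_coddiscrequivrel}: its morphisms are honest corners (with no equivalence relation) and two corners are equal iff they agree componentwise.

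For the counit, fix $(\ce,\cm)$ on $\cc$ and define $\epsilon_{(\ce,\cm)}:\cnr{D_{\ce,\cm}}\to\cc$ as the identity on objects with $[e,m]\mapsto m\circ e$ on morphisms. Since $D_{\ce,\cm}$ is codomain-discrete (Lemma \ref{lemma_sfs_to_coddiscr}), the remark above makes $(e,m)\mapsto m\circ e$ a bijection on homs, thanks to the strict factorization property. For functoriality, the composition of $[e_1,m_1]$ and $[e_2,m_2]$ is computed by filling the top-right corner $(m_1,e_2)$ in $D_{\ce,\cm}$, and the unique filler is exactly the strict factorization $e_2\circ m_1=\tilde m\circ\tilde e$; hence
\[
\epsilon\big([e_2,m_2]\circ[e_1,m_1]\big)=m_2\circ\tilde m\circ\tilde e\circ e_1 = m_2\circ e_2\circ m_1\circ e_1 = \epsilon[e_2,m_2]\circ\epsilon[e_1,m_1].
\]
Since $\epsilon[e,1]=e$ and $\epsilon[1,m]=m$, $\epsilon_{(\ce,\cm)}$ is an isomorphism in $\sfs$.

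For the unit, fix $X\in\coddiscr$ and define $\eta_X:X\to D_{\ce_X,\cm_X}$ as the identity on objects, $u\mapsto[u,1]$ on vertical morphisms, $g\mapsto[1,g]$ on horizontal morphisms, and on a square $\alpha$ with boundary $(g,v,u,h)$ return the commutative square in $\cnr{X}$ produced by Proposition \ref{prop_naturalityinsquares}. Bijectivity on vertical and horizontal morphisms is immediate from the definitions of $\ce_X,\cm_X$ combined with Remark \ref{pozn_coddiscrequivrel}. For squares, the composite $[v,1]\circ[1,g]$ is computed by filling $(g,v)$ with the unique square in $X$, giving $[\hat u,\hat g]$, while $[1,h]\circ[u,1]=[u,h]$. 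Remark \ref{pozn_coddiscrequivrel} therefore reduces the commuting equation $[v,1]\circ[1,g]=[1,h]\circ[u,1]$ to $\hat u=u$ and $\hat g=h$, i.e.\ to the existence of a square in $X$ with the prescribed boundary, which is unique since codomain-discrete implies flat. The double-functor axioms for $\eta_X$ reduce to short computations with corners, and preservation of both compositions of squares is automatic because a commutative square in $\cnr{X}$ carries no data beyond its boundary.

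Naturality of both transformations is a direct check: a morphism $F$ in $\sfs$ satisfies $\cnr{D_F}[e,m]=[Fe,Fm]$, matching $F(m\circ e)$; a double functor $H$ sends $[u,1]\in\ce_X$ to $[Hu,1]\in\ce_Y$ and $[1,g]\in\cm_X$ to $[1,Hg]\in\cm_Y$. The one place where the argument is not pure bookkeeping is the squares clause of $\eta_X$: it is precisely there that the codomain-discrete hypothesis together with Remark \ref{pozn_coddiscrequivrel} does the real work.
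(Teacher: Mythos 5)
Your proposal is correct and follows essentially the same route as the paper: both directions of the equivalence are realized by identity-on-objects (iso)morphisms, with Remark \ref{pozn_coddiscrequivrel} eliminating the equivalence relation on corners, Proposition \ref{prop_naturalityinsquares} supplying one direction of the square correspondence, and the uniqueness of the filler square (plus flatness) supplying the other. The only cosmetic difference is that you orient the comparison $\cnr{D_{\ce,\cm}}\to\cc$ rather than $\cc\to\cnr{D_{\ce,\cm}}$, and you spell out the functoriality and naturality checks that the paper leaves implicit.
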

\begin{proof}

We will show that there are natural isomorphisms $1 \cong D \circ \cnr{-}$ and\\$\cnr{-} \circ D \cong 1$:

To see that $1 \cong D \circ \cnr{-}$, let $X$ be a domain-discrete double category. First note that by Remark \ref{pozn_coddiscrequivrel}, the identity-on-objects functor $\ce \to \ce_X$ sending a morphism $e \mapsto (e,1)$ is an isomorphism. Similarly we have $\cm \cong \cm_X$ via the identity-on-objects functor $m \mapsto (1,m)$.

There is now a double functor $X \to D_{\ce_X, \cm_X}$ that is identity on objects and whose vertical morphism and horizontal morphism components are given by the functors described above. To see that it is well-defined on squares, we'd need to prove the direction ``$\Rightarrow$” in the picture below:
\begin{equation}\label{eq_iffsquares}
\begin{tikzcd}
	a && b &&& a && b \\
	&&& {\text{in } X} & \iff &&&& {\text{in }D_{\ce_X,\cm_X}} \\
	c && d &&& c && d
	\arrow[""{name=0, anchor=center, inner sep=0}, "m", from=1-1, to=1-3]
	\arrow["e", from=1-3, to=3-3]
	\arrow["{e'}"', from=1-1, to=3-1]
	\arrow[""{name=1, anchor=center, inner sep=0}, "{m'}"', from=3-1, to=3-3]
	\arrow["{(e',1)}"', from=1-6, to=3-6]
	\arrow[""{name=2, anchor=center, inner sep=0}, "{(m,1)}", from=1-6, to=1-8]
	\arrow["{(e,1)}", from=1-8, to=3-8]
	\arrow[""{name=3, anchor=center, inner sep=0}, "{(m',1)}"', from=3-6, to=3-8]
	\arrow["\exists"', shorten <=9pt, shorten >=9pt, Rightarrow, from=0, to=1]
	\arrow["\exists"', shorten <=9pt, shorten >=9pt, Rightarrow, from=2, to=3]
\end{tikzcd}
\end{equation}
But this direction follows from from Proposition \ref{prop_naturalityinsquares}. Since both double categories are flat, to show that this double functor is an isomorphism it suffices to show the direction ``$\Leftarrow$” in Diagram \eqref{eq_iffsquares}. Consider the square used for the composition $(e,1) \circ (m,1)$:
\[\begin{tikzcd}
	a && b \\
	\\
	\bullet && d
	\arrow[""{name=0, anchor=center, inner sep=0}, "m", from=1-1, to=1-3]
	\arrow["e", from=1-3, to=3-3]
	\arrow["{\widehat{e}}"', from=1-1, to=3-1]
	\arrow[""{name=1, anchor=center, inner sep=0}, "{\widehat{m}}"', from=3-1, to=3-3]
	\arrow[shorten <=9pt, shorten >=9pt, Rightarrow, from=0, to=1]
\end{tikzcd}\]
From the commutativity of the square on the above right we get $e' = \widehat{e}$, $m' = \widehat{m}$ and thus we obtain the left square in Diagram \eqref{eq_iffsquares}.

To see that $\cnr{-} \circ D \cong 1$, consider now a strict factorization system $(\ce,\cm)$ on $\cc$, we then have a functor $\cc \to \cnr{D_{\ce,\cm}}$ that is identity on objects and sends $f \mapsto (e,m)$, where $f = me$ is the unique factorization with $e \in \ce, m \in \cm$. This functor is an isomorphism and preserves the classes $\ce, \cm$. 
\end{proof}

\subsection{Orthogonal factorization systems}\label{subsekce_ofs}

\begin{defi}
An \textit{orthogonal factorization system} $(\ce,\cm)$ on a category $\cc$ consists of two wide sub-categories $\ce, \cm \subseteq \cc$ satisfying\footnote{Note that this definition is equivalent to the more standard one in which the orthogonality $\ce \perp \cm$ appears. See \cite[Theorem 3.7]{joyalfs}}:
\begin{itemize}
\item For every morphism $f \in \cc$ there exist $e \in \ce, m \in \cm$ such that $f = m \circ e$, and if $f = m'e'$ is a second factorization with $e' \in \ce, m' \in \cm$, there exists a unique morphism $\theta$ so that this commutes:
\[\begin{tikzcd}
	a && {a'} && b \\
	a && {a''} && b
	\arrow["e", from=1-1, to=1-3]
	\arrow["m", from=1-3, to=1-5]
	\arrow["{e'}"', from=2-1, to=2-3]
	\arrow["{m'}"', from=2-3, to=2-5]
	\arrow[Rightarrow, no head, from=1-1, to=2-1]
	\arrow[Rightarrow, no head, from=1-5, to=2-5]
	\arrow["{\exists ! \theta}", dashed, from=1-3, to=2-3]
\end{tikzcd}\]
\item we have that $\ce \cap \cm = \{ \text{isomorphisms in }\cc \}$.
\end{itemize}
\end{defi}


In the same way as in Definition \ref{defi_SFS} define the category $\ofs$ with objects orthogonal factorization systems and morphisms being functors preserving both classes.

To describe orthogonal factorization systems as certain double categories, we will introduce a more symmetric version of a crossed double category. Given a double category $X$, denote:
\begin{equation}\label{eq_Xhvezdnicka}
X^* := ((X^v)^h)^T.
\end{equation}
This is the double category obtained from $X$ by taking both the vertical and horizontal opposites as well as the transpose.

\begin{defi}
A square $\lambda$ in a double category $X$ will be called \textit{bicartesian} if it is opcartesian in both $X$ and $X^*$. In elementary terms, this means that given a square $\alpha$ with the same top-right corner as $\lambda$, there exist unique squares $\epsilon, \delta$ so that both the bottom left composite and the bottom right composite are equal to the square $\alpha$:
\[\begin{tikzcd}
	a && b \\
	c && d & a & b & a && a && b \\
	e && d & e & d & e && c && d
	\arrow[""{name=0, anchor=center, inner sep=0}, "g", from=1-1, to=1-3]
	\arrow["v", from=1-3, to=2-3]
	\arrow["u"', from=1-1, to=2-1]
	\arrow[""{name=1, anchor=center, inner sep=0}, "h"{description}, from=2-1, to=2-3]
	\arrow[Rightarrow, no head, from=2-3, to=3-3]
	\arrow["\theta"', from=2-1, to=3-1]
	\arrow[""{name=2, anchor=center, inner sep=0}, "k"', from=3-1, to=3-3]
	\arrow[""{name=3, anchor=center, inner sep=0}, Rightarrow, no head, from=2-6, to=2-8]
	\arrow["l"', from=2-6, to=3-6]
	\arrow[""{name=4, anchor=center, inner sep=0}, "\psi"', from=3-6, to=3-8]
	\arrow["u"', from=2-8, to=3-8]
	\arrow[""{name=5, anchor=center, inner sep=0}, "g", from=2-8, to=2-10]
	\arrow["v", from=2-10, to=3-10]
	\arrow[""{name=6, anchor=center, inner sep=0}, "h"', from=3-8, to=3-10]
	\arrow[""{name=7, anchor=center, inner sep=0}, "g", from=2-4, to=2-5]
	\arrow["l"', from=2-4, to=3-4]
	\arrow["v", from=2-5, to=3-5]
	\arrow[""{name=8, anchor=center, inner sep=0}, "k"', from=3-4, to=3-5]
	\arrow["{\exists ! \epsilon}", shorten <=4pt, shorten >=4pt, Rightarrow, from=1, to=2]
	\arrow["\lambda", shorten <=4pt, shorten >=4pt, Rightarrow, from=0, to=1]
	\arrow["\lambda", shorten <=4pt, shorten >=4pt, Rightarrow, from=5, to=6]
	\arrow["{\exists !  \delta}", shorten <=4pt, shorten >=4pt, Rightarrow, from=3, to=4]
	\arrow["\alpha"', shorten <=4pt, shorten >=4pt, Rightarrow, from=7, to=8]
\end{tikzcd}\]
\end{defi}

\begin{defi}
A double category $X$ is \textit{top-right bicrossed} if every top-right corner can be filled into a bicartesian square, and moreover bicartesian squares are closed under horizontal and vertical compositions and contain vertical and horizontal identities.
\end{defi}

In a top-right bicrossed double category $X$, the two conditions of being opcartesian in $X$ and in $X^*$ can be expressed as a single condition as follows:

\begin{lemma}\label{LEMMA_bicartdblopcart}
Let $\lambda$ be a bicartesian square in a top-right bicrossed double category $X$ and let $\alpha$ be any square with the same top-right corner. Then there exists a unique square $\beta$ such that this equation holds:
\[\begin{tikzcd}
	a && a && b && a &&&& b \\
	{\widehat{b}} && {\widehat{b}} && d & {=} \\
	d && {\widehat{b}} && d && d &&&& c
	\arrow[""{name=0, anchor=center, inner sep=0}, "f", from=1-3, to=1-5]
	\arrow["g", from=1-5, to=2-5]
	\arrow["{\pi_1}"', from=1-3, to=2-3]
	\arrow[""{name=1, anchor=center, inner sep=0}, "{\pi_2}"', from=2-3, to=2-5]
	\arrow[""{name=2, anchor=center, inner sep=0}, "{\widehat{v}}"', from=3-1, to=3-3]
	\arrow["{\widehat{u}}"', from=2-1, to=3-1]
	\arrow[""{name=3, anchor=center, inner sep=0}, "f", from=1-7, to=1-11]
	\arrow["g", from=1-11, to=3-11]
	\arrow["u"', from=1-7, to=3-7]
	\arrow[""{name=4, anchor=center, inner sep=0}, "v"', from=3-7, to=3-11]
	\arrow[""{name=5, anchor=center, inner sep=0}, Rightarrow, no head, from=2-1, to=2-3]
	\arrow[Rightarrow, no head, from=2-3, to=3-3]
	\arrow[Rightarrow, no head, from=1-1, to=1-3]
	\arrow[Rightarrow, no head, from=2-5, to=3-5]
	\arrow["{\pi_1}", from=1-1, to=2-1]
	\arrow["{\pi_2}"', from=3-3, to=3-5]
	\arrow["v"', curve={height=18pt}, from=3-1, to=3-5]
	\arrow["u"', curve={height=18pt}, from=1-1, to=3-1]
	\arrow["\lambda", shorten <=4pt, shorten >=4pt, Rightarrow, from=0, to=1]
	\arrow["\alpha", shorten <=9pt, shorten >=9pt, Rightarrow, from=3, to=4]
	\arrow["{\exists ! \beta}", shorten <=4pt, shorten >=4pt, Rightarrow, from=5, to=2]
\end{tikzcd}\]
\end{lemma}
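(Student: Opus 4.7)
The plan is to apply the two opcartesian properties of $\lambda$'s bicartesianness in sequence—first the one in $X^*$, then the one in $X$ applied to a bicartesian horizontal identity square—and to verify the equation via middle-four interchange together with the unit laws.

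First, since $\alpha$ shares the top-right corner $(f, g)$ with $\lambda$, the opcartesian property of $\lambda$ in $X^*$ yields a unique square $\delta$ with boundary top $1_a$, right $\pi_1$, left $u$, bottom $\widehat{v}$, satisfying $\alpha = \delta \cdot_h \lambda$; in particular this forces the factorization $v = \pi_2 \cdot \widehat{v}$. Second, since $X$ is top-right bicrossed, bicartesian squares contain all horizontal identity squares, so the horizontal identity square $\text{hor-id}_{\pi_1}$ is bicartesian and in particular opcartesian in $X$. Its top-right corner agrees with that of $\delta$ (both being $(1_a, \pi_1)$), so applying its opcartesian property to $\delta$ produces a unique square $\beta$ of the prescribed boundary satisfying $\delta = \text{hor-id}_{\pi_1} \cdot_v \beta$; this forces $u = \widehat{u} \cdot \pi_1$.

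The equation then follows by a short calculation: $\alpha = \delta \cdot_h \lambda = (\text{hor-id}_{\pi_1} \cdot_v \beta) \cdot_h \lambda$; using the unit law $\lambda = \lambda \cdot_v \text{vert-id}_{\pi_2}$ together with the middle-four interchange law rewrites this as $(\text{hor-id}_{\pi_1} \cdot_h \lambda) \cdot_v (\beta \cdot_h \text{vert-id}_{\pi_2})$, which is the four-square composite displayed in the statement. Uniqueness of $\beta$ follows immediately: any $\beta'$ satisfying the stated equation produces, via the same interchange, a $\delta' := \text{hor-id}_{\pi_1} \cdot_v \beta'$ with $\alpha = \delta' \cdot_h \lambda$, forcing $\delta' = \delta$ by the first step and then $\beta' = \beta$ by the second.

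The main subtle point I anticipate is pinning down, from the triple duality $X^* = ((X^v)^h)^T$, the precise $X$-level statement of opcartesianness of $\lambda$ in $X^*$—namely that it yields the horizontal factorization $\alpha = \delta \cdot_h \lambda$ with $\delta$ having the boundary recorded above. Once that translation is established, the rest of the argument is a mechanical application of the interchange and unit laws.
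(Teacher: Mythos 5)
Your proof is correct and follows essentially the same route as the paper's: first factor $\alpha$ through $\lambda$ using opcartesianness of $\lambda$ in $X^*$ (the paper's square $\gamma$ is your $\delta$), then factor the resulting square through the horizontal identity on $\pi_1$ using opcartesianness in $X$, and obtain uniqueness by reversing these two steps. The only difference is that you spell out the middle-four-interchange bookkeeping that the paper leaves implicit.
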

\begin{proof}
From the definition of opcartesianness in $X^*$ there is a unique square $\gamma$ such that:
\begin{equation}\label{eq_gammalambdahoriz}
\begin{tikzcd}
	a && a && b && a && b \\
	c && {\widehat{b}} && d && c && d
	\arrow["f"', from=1-1, to=2-1]
	\arrow[""{name=0, anchor=center, inner sep=0}, "f"', from=1-7, to=2-7]
	\arrow["v", from=1-9, to=2-9]
	\arrow[""{name=1, anchor=center, inner sep=0}, "g", from=1-7, to=1-9]
	\arrow[""{name=2, anchor=center, inner sep=0}, "k"', from=2-7, to=2-9]
	\arrow[""{name=3, anchor=center, inner sep=0}, Rightarrow, no head, from=1-1, to=1-3]
	\arrow[""{name=4, anchor=center, inner sep=0}, "g", from=1-3, to=1-5]
	\arrow[""{name=5, anchor=center, inner sep=0}, "h"', from=2-3, to=2-5]
	\arrow[""{name=6, anchor=center, inner sep=0}, "v", from=1-5, to=2-5]
	\arrow["u"', from=1-3, to=2-3]
	\arrow[""{name=7, anchor=center, inner sep=0}, "\psi"', from=2-1, to=2-3]
	\arrow["\gamma", shorten <=4pt, shorten >=4pt, Rightarrow, from=3, to=7]
	\arrow["\lambda", shorten <=4pt, shorten >=4pt, Rightarrow, from=4, to=5]
	\arrow["\alpha", shorten <=4pt, shorten >=4pt, Rightarrow, from=1, to=2]
	\arrow["{=}"{description}, draw=none, from=6, to=0]
\end{tikzcd}
\end{equation}

Because the horizontal identity square on a vertical morphism $u$ is opcartesian in $X$ (because $X$ is top-right bicrossed), there exists a unique square $\gamma'$ such that:
\[\begin{tikzcd}
	a && a && a && b \\
	{\widehat{b}} && {\widehat{b}} & {=} \\
	{\widehat{b}} && d && c && d
	\arrow["u"', from=1-1, to=2-1]
	\arrow["\theta"', from=2-1, to=3-1]
	\arrow[""{name=0, anchor=center, inner sep=0}, Rightarrow, no head, from=2-1, to=2-3]
	\arrow[""{name=1, anchor=center, inner sep=0}, "\psi"', from=3-1, to=3-3]
	\arrow[Rightarrow, no head, from=1-1, to=1-3]
	\arrow["u", from=1-3, to=2-3]
	\arrow[Rightarrow, no head, from=2-3, to=3-3]
	\arrow[""{name=2, anchor=center, inner sep=0}, Rightarrow, no head, from=1-5, to=1-7]
	\arrow["u", from=1-7, to=3-7]
	\arrow["f"', from=1-5, to=3-5]
	\arrow[""{name=3, anchor=center, inner sep=0}, "\psi"', from=3-5, to=3-7]
	\arrow["\gamma", shorten <=9pt, shorten >=9pt, Rightarrow, from=2, to=3]
	\arrow["{\gamma'}", shorten <=4pt, shorten >=4pt, Rightarrow, from=0, to=1]
\end{tikzcd}\]
This gives us the \textbf{existence}. To prove the  \textbf{uniqueness}, let $\beta$ be a different square satisfying the equation in the Lemma. Then the composite $\beta \circ 1^\bullet_{\pi_1}$ (vertical composite of $\beta$ and the horizontal identity square on $\pi_1$) satisfies the equation \eqref{eq_gammalambdahoriz} (with $\beta \circ 1^\bullet_{\pi_1}$ in place of $\gamma$). Because $\lambda$ is opcartesian in $X^*$, this forces $\beta \circ  1^\bullet_{\pi_1} = \gamma = \gamma' \circ 1^\bullet_{\pi_1}$. Because $1^\bullet_{\pi_1}$ is opcartesian in $X$, this in turn forces $\beta = \gamma'$.
\end{proof}

\color{black}

\begin{pr}
In the double category $\sq{\cc}^{v}$, a square is bicartesian if and only if it's a pullback square. If $\cc$ has pullbacks, the double category $\sq{\cc}^{v}$ is top-right bicrossed.

Both $\text{MPbSq}(\cc)^{v}$ (for a category $\cc$ with pullbacks) and $\text{BOFib}(\ce)^{v}$ are top-right bicrossed with every square being bicartesian.
\end{pr}
\color{black}

We will now focus on double categories in which every top-right corner can be filled into a square and every square is bicartesian. Such double categories are automatically crossed so we may again use the category of corners construction. Notice that in this case, two corners $(e,m), (e',m')$ are equivalent if and only if there is a single 2-cell between them (Lemma \ref{lemmaXpscr_everysquareopcart}), and moreover every 2-cell has the following form (since the vertical identities on morphisms are bicartesian):
\begin{equation}\label{PIC_twocellforbicrosseddblcat}
\begin{tikzcd}
	a && a \\
	{a'} && {a'} && b \\
	{a''} && {a'} && b
	\arrow[Rightarrow, no head, from=2-5, to=3-5]
	\arrow["m", from=2-3, to=2-5]
	\arrow["e", from=1-3, to=2-3]
	\arrow[Rightarrow, no head, from=1-1, to=1-3]
	\arrow["e"', from=1-1, to=2-1]
	\arrow[""{name=0, anchor=center, inner sep=0}, Rightarrow, no head, from=2-1, to=2-3]
	\arrow["\theta"', from=2-1, to=3-1]
	\arrow[Rightarrow, no head, from=2-3, to=3-3]
	\arrow[""{name=1, anchor=center, inner sep=0}, "\psi"', from=3-1, to=3-3]
	\arrow["m"', from=3-3, to=3-5]
	\arrow["{e'}"', curve={height=24pt}, from=1-1, to=3-1]
	\arrow["{m'}"', curve={height=24pt}, from=3-1, to=3-5]
	\arrow["\beta", shorten <=4pt, shorten >=4pt, Rightarrow, from=0, to=1]
\end{tikzcd}
\end{equation}

The following somewhat technical notion is introduced in this paper to guarantee the uniqueness of factorizations up to a unique morphism:

\begin{defi}\label{defi_jointly_monic}
A top-left corner $(\pi_1, \pi_2)$ is said to be \textit{jointly monic} if, given squares $\kappa_1$, $\kappa_2$ pictured below:
\[\begin{tikzcd}
	{a'} && b && {a''} && {a'} && {a''} && {a'} \\
	\\
	a &&&& {a'} && {a'} && {a'} && {a'}
	\arrow["{\pi_2}", from=1-1, to=1-3]
	\arrow["{\pi_1}"', from=1-1, to=3-1]
	\arrow[""{name=0, anchor=center, inner sep=0}, "{\psi'}", from=1-9, to=1-11]
	\arrow["\theta"', from=1-5, to=3-5]
	\arrow[""{name=1, anchor=center, inner sep=0}, "\psi", from=1-5, to=1-7]
	\arrow[Rightarrow, no head, from=1-7, to=3-7]
	\arrow[""{name=2, anchor=center, inner sep=0}, Rightarrow, no head, from=3-5, to=3-7]
	\arrow["{\theta'}"', from=1-9, to=3-9]
	\arrow[""{name=3, anchor=center, inner sep=0}, Rightarrow, no head, from=3-9, to=3-11]
	\arrow[Rightarrow, no head, from=1-11, to=3-11]
	\arrow["{\kappa_2}", shorten <=9pt, shorten >=9pt, Rightarrow, from=0, to=3]
	\arrow["{\kappa_1}", shorten <=9pt, shorten >=9pt, Rightarrow, from=1, to=2]
\end{tikzcd}\]
We have the following implication:
\[
(\pi_1 \theta = \pi_1 \theta' \land \pi_2 \psi = \pi_2 \psi' ) \Rightarrow \theta = \theta', \psi = \psi'.
\]
\end{defi}

\color{black}
\begin{pr}\label{prsq_jointmonicity}
In $\sq{\cc}$ a top-right corner $(\pi_1,\pi_2)$ is jointly monic if and only if the pair of morphisms $(\pi_1,\pi_2)$ is jointly monic in the category $\cc$.
In $\pbsq{\cc}$ a top-right corner is jointly monic if and only if the pair is jointly monic in $\cc$ \textbf{with respect to all isomorphisms}.
\end{pr}

\begin{pr}\label{prmpbsq_pbsqsjointmonicity}
In the double category $\text{MPbSq}(\cc)$ every top-left corner is jointly monic as we now show: Let there be squares $\kappa_1, \kappa_2$ as in the definition, then $\theta = \psi$, $\theta' = \psi'$ and the equality $\pi_1 \theta = \pi_1 \theta'$ forces $\theta = \theta'$ because $\pi_1$ is a monomorphism.
\end{pr}

\begin{pr}\label{prbofib_pbsqsjointmonicity}
Every top-right corner is jointly monic in the double category $\text{BOFib}(\ce)$ as well: Let there be a top-left corner and squares as pictured below:
\[\begin{tikzcd}
	C && B && D && C && D && C \\
	\\
	A &&&& C && C && C && C
	\arrow["G", from=1-1, to=1-3]
	\arrow["F"', from=1-1, to=3-1]
	\arrow[""{name=0, anchor=center, inner sep=0}, "{P'}", from=1-9, to=1-11]
	\arrow["H"', from=1-5, to=3-5]
	\arrow[""{name=1, anchor=center, inner sep=0}, "P", from=1-5, to=1-7]
	\arrow[Rightarrow, no head, from=1-7, to=3-7]
	\arrow[""{name=2, anchor=center, inner sep=0}, Rightarrow, no head, from=3-5, to=3-7]
	\arrow["{H'}"', from=1-9, to=3-9]
	\arrow[""{name=3, anchor=center, inner sep=0}, Rightarrow, no head, from=3-9, to=3-11]
	\arrow[Rightarrow, no head, from=1-11, to=3-11]
	\arrow["{\kappa_2}", shorten <=9pt, shorten >=9pt, Rightarrow, from=0, to=3]
	\arrow["{\kappa_1}", shorten <=9pt, shorten >=9pt, Rightarrow, from=1, to=2]
\end{tikzcd}\]

Assume that $FH = FH'$, $GP = GP'$. We again get $H = P, H' = P'$. Now since $F$ is a bijection on objects, we have $H_0 = H'_0$ (the object parts of the functors agree). Because $G$ is a discrete opfibration, the square below is a pullback and we obtain $H_1 = H'_1$ as well:
\[\begin{tikzcd}
	{D_1} \\
	& {C_1} && {B_1} \\
	\\
	& {C_0} && {B_0}
	\arrow["s", from=2-4, to=4-4]
	\arrow["s"', from=2-2, to=4-2]
	\arrow["{G_1}", from=2-2, to=2-4]
	\arrow["{G_0}"', from=4-2, to=4-4]
	\arrow["\lrcorner"{anchor=center, pos=0.125}, draw=none, from=2-2, to=4-4]
	\arrow["{H_1'}", shift left=2, from=1-1, to=2-2]
	\arrow["{H_1}"', shift right=2, from=1-1, to=2-2]
	\arrow["{sH_0 = sH'_0}"', curve={height=24pt}, from=1-1, to=4-2]
	\arrow["{G_1H_1' = G_1H_1}", curve={height=-24pt}, from=1-1, to=2-4]
\end{tikzcd}\]
\end{pr}
\color{black}

\noindent Recall Notation \ref{notecnrX_verticalhorizontalcnrs}.

\begin{lemma}\label{lemma_isossubcxcm}
Let $X$ be a double category in which every top-right corner can be filled into a square and every square is bicartesian. Then:
\[
\ce_X \cap \cm_X \subseteq \{ \text{isomorphisms in } \cnr{X} \}.
\]
\end{lemma}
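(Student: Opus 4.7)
The strategy is to extract from the equality $[f, 1_b] = [1_a, g]$ a vertical morphism $\theta \colon b \to a$ that is a two-sided inverse of $f$ in $X_0$, and then check that $[\theta, 1_a]$ is a two-sided inverse of $\phi$ in $\cnr{X}$.

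Let $\phi \in \ce_X \cap \cm_X$, so $\phi = [f, 1_b] = [1_a, g]$ for some vertical $f \colon a \to b$ and horizontal $g \colon a \to b$. Since every square is bicartesian, in particular opcartesian, Lemma \ref{lemmaXpscr_everysquareopcart} applies: the equivalence is witnessed by a single vertically invertible 2-cell $\beta \colon (f, 1_b) \Rightarrow (1_a, g)$. Unpacking Definition \ref{defi2cell}, $\beta$ is a square with top edge $1_b$, bottom edge $g$, right edge $1_b$, and left edge some $\theta \colon b \to a$ satisfying $\theta \circ f = 1_a$. Because $\beta$ is invertible as a morphism in the category $X_1$, applying the functor $d_1 \colon X_1 \to X_0$ (which returns the left vertical edge of a square) shows that $\theta$ is an isomorphism in $X_0$. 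From $\theta \circ f = 1_a$ and the uniqueness of right inverses of an isomorphism, one deduces $f = \theta^{-1}$, and in particular $f \circ \theta = 1_b$.

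To finish, I would compute both composites involving $[\theta, 1_a] \colon b \to a$ using the composition rule of Construction \ref{concnrpbsqs}. For $[f, 1_b] \circ [\theta, 1_a]$, the middle corner $(1_a, f)$ must be filled by an opcartesian square; the horizontal identity square $s(f)$ on $f$ is such a filler, by the very definition of a crossed double category, and its left and bottom edges are $f$ and $1_b$. Hence the composite equals $[f \circ \theta, 1_b \circ 1_b] = [1_b, 1_b]$, the identity on $b$. A symmetric calculation using $s(\theta)$ gives $[\theta, 1_a] \circ [f, 1_b] = [1_a, 1_a]$, so $\phi$ is an isomorphism.

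I do not anticipate any serious obstacle; the only subtle point is reading off from the single bicartesian 2-cell $\beta$ that the left edge $\theta$ is an actual two-sided inverse of $f$ in $X_0$, which is where vertical invertibility (and hence the bicartesian hypothesis) is used. Once $\theta$ is in hand, the rest reduces to noting that horizontal identity squares are opcartesian and may be used as canonical fillers.
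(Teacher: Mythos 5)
Your proof is correct and takes essentially the same route as the paper's: both extract the single (vertically invertible) 2-cell between the corners $(f,1_b)$ and $(1_a,g)$, read off from its left edge $\theta$ (with $\theta f = 1_a$ and $\theta$ invertible) that $f$ is a vertical isomorphism, and conclude that $[f^{-1},1]$ inverts $[f,1_b]$ in $\cnr{X}$. Your explicit computation of the two composites using horizontal identity squares as opcartesian fillers merely spells out a step the paper leaves implicit.
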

\begin{proof}
Let $[u,1] = [1,h]$ be a morphism in the intersection. There is then a 2-cell as follows (see the remark above diagram \eqref{PIC_twocellforbicrosseddblcat}):
\[\begin{tikzcd}
	a && a \\
	{a'} && {a'} && {a'} \\
	a && {a'} && {a'}
	\arrow["u", from=1-3, to=2-3]
	\arrow[Rightarrow, no head, from=2-3, to=2-5]
	\arrow["u"', from=1-1, to=2-1]
	\arrow["\theta"', from=2-1, to=3-1]
	\arrow[""{name=0, anchor=center, inner sep=0}, "h"', from=3-1, to=3-3]
	\arrow[Rightarrow, no head, from=2-3, to=3-3]
	\arrow[""{name=1, anchor=center, inner sep=0}, Rightarrow, no head, from=2-1, to=2-3]
	\arrow[Rightarrow, no head, from=3-3, to=3-5]
	\arrow[Rightarrow, no head, from=2-5, to=3-5]
	\arrow[Rightarrow, no head, from=1-1, to=1-3]
	\arrow[curve={height=18pt}, Rightarrow, no head, from=1-1, to=3-1]
	\arrow["h"', curve={height=18pt}, from=3-1, to=3-5]
	\arrow["\beta", shorten <=4pt, shorten >=4pt, Rightarrow, from=1, to=0]
\end{tikzcd}\]
Since every square is bicartesian, $\theta, h$ are isomorphisms, and so is $\theta^{-1} = u$. Hence $[u,1]$ is an isomorphism in $\cnr{X}$ with the inverse being $[u^{-1},1]$.

\end{proof}

\begin{lemma}
Let $X$ be a double category in which every top-right corner can be filled into a square and every square is bicartesian. An equivalence class of corners $[u,g]$ is invertible in $\cnr{X}$ if and only if both $u$ and $g$ are isomorphisms.
\end{lemma}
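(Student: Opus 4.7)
The backward implication ($u, g$ isomorphisms $\Rightarrow [u,g]$ invertible) is handled by direct construction. The decomposition $[u,g] = [1,g] \circ [u,1]$ expresses $[u,g]$ as a composite of two morphisms in $\cnr{X}$, each of which is individually invertible: $[u,1]$ is inverted by $[u^{-1},1]$ (composition of vertical corners via horizontal identity squares, which are opcartesian by hypothesis) and $[1,g]$ is inverted by $[1,g^{-1}]$.

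For the forward implication, suppose $[u,g]$ is invertible with inverse $[v,h]$. I would let $\kappa$ and $\widetilde{\kappa}$ denote the bicartesian fills of the corners $(h,u)$ and $(g,v)$, with resulting left verticals $\widehat{u}: b' \to c$, $\widetilde{v}: a' \to d$ and bottom horizontals $\widehat{h}: c \to a'$, $\widetilde{g}: d \to b'$. The identity composites then become
\[
[\widehat{u}v,\, g\widehat{h}] \;=\; [u,g]\circ[v,h] \;=\; 1_b, \qquad [\widetilde{v}u,\, h\widetilde{g}] \;=\; [v,h]\circ[u,g] \;=\; 1_a.
\]
By Lemma \ref{lemmaXpscr_everysquareopcart}, each equivalence to the identity corner is realized by a single vertically invertible 2-cell $\beta_1, \beta_2$ — that is, squares whose right vertical and bottom horizontal are the identity. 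Since every square of $X$ is bicartesian, the reasoning used in the proof of Lemma \ref{lemma_isossubcxcm} (where opcartesianness in both $X$ and $X^*$ forces the remaining sides of an identity-heavy bicartesian square to be invertible) yields isomorphisms $\theta_1: c \to b$ and $\theta_2: d \to a$ satisfying $\theta_1\widehat{u}v = 1_b$ and $\theta_2\widetilde{v}u = 1_a$, together with isomorphisms $g\widehat{h}$ and $h\widetilde{g}$ in $h(X)$.

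Applying Proposition \ref{prop_naturalityinsquares} to $\beta_1$ and $\beta_2$ next provides the identifications $[1,g\widehat{h}] = [\theta_1,1]$ and $[1,h\widetilde{g}] = [\theta_2,1]$ in $\cnr{X}$; as elements of $\ce_X \cap \cm_X$ they are isomorphisms by Lemma \ref{lemma_isossubcxcm}. Combined with the factorizations $1_b = [1,g\widehat{h}]\circ[\widehat{u}v,1]$ and $1_a = [1,h\widetilde{g}]\circ[\widetilde{v}u,1]$, this forces $[\widehat{u}v,1]$ and $[\widetilde{v}u,1]$ to be their inverses, hence isomorphisms in $\cnr{X}$; a further single-2-cell analysis in the bicartesian setting lifts this conclusion to $X_0$, giving that $\widehat{u}v$ and $\widetilde{v}u$ are themselves isomorphisms.

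The main obstacle is now the final step: converting the accumulated one-sided invertibility data into two-sided inverses for $u$ and $g$ individually. At this stage, $\theta_2\widetilde{v}$ is a left inverse to $u$ and $u\theta_2$ is a right inverse to $\widetilde{v}$, while $g\widehat{h}$ being iso exhibits $g$ as split epic and $\widehat{h}$ as split monic, with symmetric statements for $v, h, \widehat{u}, \widetilde{g}$. I plan to close the loop by re-applying the three-iso-sides principle to the bicartesian squares $\kappa$ and $\widetilde{\kappa}$: once the isomorphisms $g\widehat{h}, h\widetilde{g}, \widehat{u}v, \widetilde{v}u$ are in hand, the bicartesian structure relating the four sides of $\kappa$ (resp.\ $\widetilde{\kappa}$) propagates invertibility throughout, forcing $\widehat{u}, \widehat{h}$ (resp.\ $\widetilde{v}, \widetilde{g}$) to be isomorphisms; iso-ness of $u$ and $g$ then follows by reading off the appropriate composites.
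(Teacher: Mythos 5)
Your backward direction and the first two-thirds of the forward direction track the paper's argument closely: setting up the two inverse-law composites, collapsing the equivalences to single 2-cells via Lemma \ref{lemmaXpscr_everysquareopcart}, and using the bicartesian comparison with identity squares (as in Lemma \ref{lemma_isossubcxcm}) to conclude that the composites $\widehat{u}v$, $\widetilde{v}u$, $g\widehat{h}$, $h\widetilde{g}$ are isomorphisms is all sound. The problem is the final step, which you correctly flag as the main obstacle but do not actually resolve. At that stage every piece of data you have is one-sided: $g$ is split epi (via $\widehat{h}(g\widehat{h})^{-1}$), $u$ is split mono (via $(\widetilde{v}u)^{-1}\widetilde{v}$), and so on, and nothing in your list pairs a left inverse with a right inverse for the \emph{same} morphism. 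The proposed fix — ``re-applying the three-iso-sides principle to $\kappa$ and $\widetilde{\kappa}$ so that the bicartesian structure propagates invertibility throughout'' — is not a valid argument: bicartesianness of a square is a universal property relating it to \emph{other} squares sharing its top-right corner, and it gives no implication among the four boundary morphisms of a single square of the form ``if certain composites of adjacent sides are invertible then the individual sides are.'' Without invariance (which is not assumed in this lemma) there is no such propagation, and one can check that the relations you have accumulated, viewed abstractly, are consistent with $g$ being a non-invertible split epimorphism.

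The missing idea is the paper's triple-composite trick. One computes $[v,1]\circ[u,g]\circ[1,h]$ in two ways, obtaining $[v\theta'\widehat{u},1]=[1,\widehat{g}\psi h]$, a morphism lying in $\ce_X\cap\cm_X$; the argument of Lemma \ref{lemma_isossubcxcm} then makes $\widehat{g}\psi h$ an isomorphism, so $h$ acquires a \emph{left}-sided splitting $\Theta^{-1}(\widehat{g}\psi)h=1$. Separately, reading the bottom row of the 2-cell witnessing $[v,h]\circ[u,g]=1_a$ (in the normal form \eqref{PIC_twocellforbicrosseddblcat}) gives $h(\widehat{g}\psi)=1$, a \emph{right}-sided splitting of $h$ by the very same morphism $\widehat{g}\psi$. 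It is this sandwiching of $h$ on both sides of identities/isomorphisms — which your proposal never produces for any of $u,g,v,h$ — that yields a two-sided inverse; the remaining morphisms are then handled symmetrically. You should replace your final paragraph with an argument of this shape.
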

\begin{proof}
Let $[u,g]$ be an isomorphism in $\cnr{X}$ with inverse $[v,h]$ as pictured together with the inverse laws below:
\begin{equation}\label{eq_ugvhinverse}
\begin{tikzcd}
	a & a &&&&&& b & b \\
	{a'} & {a'} && b &&&& {b'} & {b'} && a \\
	a & a && {b'} && a && b & b && {a'} && b \\
	a & a && {b'} && a && b & b && {a'} && b
	\arrow["u", from=1-2, to=2-2]
	\arrow[""{name=0, anchor=center, inner sep=0}, "g", from=2-2, to=2-4]
	\arrow["v", from=2-4, to=3-4]
	\arrow["h", from=3-4, to=3-6]
	\arrow["{\widehat{v}}"', from=2-2, to=3-2]
	\arrow[""{name=1, anchor=center, inner sep=0}, "{\widehat{g}}"', from=3-2, to=3-4]
	\arrow[""{name=2, anchor=center, inner sep=0}, Rightarrow, no head, from=3-1, to=3-2]
	\arrow[Rightarrow, no head, from=3-2, to=4-2]
	\arrow["\theta"', from=3-1, to=4-1]
	\arrow[""{name=3, anchor=center, inner sep=0}, "\psi"', from=4-1, to=4-2]
	\arrow["u"', from=1-1, to=2-1]
	\arrow["{\widehat{v}}"', from=2-1, to=3-1]
	\arrow[Rightarrow, no head, from=1-1, to=1-2]
	\arrow[Rightarrow, no head, from=3-6, to=4-6]
	\arrow["{\widehat{g}}"', from=4-2, to=4-4]
	\arrow["h"', from=4-4, to=4-6]
	\arrow[curve={height=24pt}, Rightarrow, no head, from=1-1, to=4-1]
	\arrow[curve={height=24pt}, Rightarrow, no head, from=4-1, to=4-6]
	\arrow["g", from=3-11, to=3-13]
	\arrow["v", from=1-9, to=2-9]
	\arrow[""{name=4, anchor=center, inner sep=0}, "h", from=2-9, to=2-11]
	\arrow["{\widehat{u}}"', from=2-9, to=3-9]
	\arrow[""{name=5, anchor=center, inner sep=0}, "{\widehat{h}}"', from=3-9, to=3-11]
	\arrow["u", from=2-11, to=3-11]
	\arrow["v"', from=1-8, to=2-8]
	\arrow["{\widehat{u}}"', from=2-8, to=3-8]
	\arrow[Rightarrow, no head, from=1-8, to=1-9]
	\arrow["{\widehat{h}}"', from=4-9, to=4-11]
	\arrow["g"', from=4-11, to=4-13]
	\arrow[Rightarrow, no head, from=3-13, to=4-13]
	\arrow[Rightarrow, no head, from=3-9, to=4-9]
	\arrow[""{name=6, anchor=center, inner sep=0}, Rightarrow, no head, from=3-8, to=3-9]
	\arrow["{\theta'}"', from=3-8, to=4-8]
	\arrow[""{name=7, anchor=center, inner sep=0}, "{\psi'}"', from=4-8, to=4-9]
	\arrow[curve={height=24pt}, Rightarrow, no head, from=1-8, to=4-8]
	\arrow[curve={height=24pt}, Rightarrow, no head, from=4-8, to=4-13]
	\arrow[shorten <=4pt, shorten >=4pt, Rightarrow, from=0, to=1]
	\arrow[shorten <=4pt, shorten >=4pt, Rightarrow, from=2, to=3]
	\arrow[shorten <=4pt, shorten >=4pt, Rightarrow, from=6, to=7]
	\arrow[shorten <=4pt, shorten >=4pt, Rightarrow, from=4, to=5]
\end{tikzcd}
\end{equation}

From the pictures below we obtain the following equalities:
\begin{align*}
[v,1] \circ [u,g] &= [1,\widehat{g}\psi], \\
[u,g] \circ [1,h] &= [\theta' \widehat{u},1].
\end{align*}
\[\begin{tikzcd}
	a & a \\
	{a'} & {a'} && b &&& {b'} & {b'} && a \\
	a & a && {b'} &&& b & b && {a'} && b \\
	a & a && {b'} &&& b & b && {a'} && b
	\arrow["u", from=1-2, to=2-2]
	\arrow[""{name=0, anchor=center, inner sep=0}, "g", from=2-2, to=2-4]
	\arrow["v", from=2-4, to=3-4]
	\arrow["{\widehat{v}}"', from=2-2, to=3-2]
	\arrow[""{name=1, anchor=center, inner sep=0}, "{\widehat{g}}"', from=3-2, to=3-4]
	\arrow[""{name=2, anchor=center, inner sep=0}, Rightarrow, no head, from=3-1, to=3-2]
	\arrow[Rightarrow, no head, from=3-2, to=4-2]
	\arrow["\theta"', from=3-1, to=4-1]
	\arrow[""{name=3, anchor=center, inner sep=0}, "\psi"', from=4-1, to=4-2]
	\arrow["u"', from=1-1, to=2-1]
	\arrow["{\widehat{v}}"', from=2-1, to=3-1]
	\arrow[Rightarrow, no head, from=1-1, to=1-2]
	\arrow["{\widehat{g}}"', from=4-2, to=4-4]
	\arrow[curve={height=24pt}, Rightarrow, no head, from=1-1, to=4-1]
	\arrow["g", from=3-10, to=3-12]
	\arrow[""{name=4, anchor=center, inner sep=0}, "h", from=2-8, to=2-10]
	\arrow["{\widehat{u}}"', from=2-8, to=3-8]
	\arrow[""{name=5, anchor=center, inner sep=0}, "{\widehat{h}}"', from=3-8, to=3-10]
	\arrow["u", from=2-10, to=3-10]
	\arrow["{\widehat{u}}"', from=2-7, to=3-7]
	\arrow["{\widehat{h}}"', from=4-8, to=4-10]
	\arrow["g"', from=4-10, to=4-12]
	\arrow[Rightarrow, no head, from=3-12, to=4-12]
	\arrow[Rightarrow, no head, from=3-8, to=4-8]
	\arrow[""{name=6, anchor=center, inner sep=0}, Rightarrow, no head, from=3-7, to=3-8]
	\arrow["{\theta'}"', from=3-7, to=4-7]
	\arrow[""{name=7, anchor=center, inner sep=0}, "{\psi'}"', from=4-7, to=4-8]
	\arrow[curve={height=24pt}, Rightarrow, no head, from=4-7, to=4-12]
	\arrow["{\widehat{g}\psi}"', curve={height=24pt}, from=4-1, to=4-4]
	\arrow[Rightarrow, no head, from=3-4, to=4-4]
	\arrow["{\theta'\widehat{u}}"', curve={height=24pt}, from=2-7, to=4-7]
	\arrow[Rightarrow, no head, from=2-7, to=2-8]
	\arrow[shorten <=4pt, shorten >=4pt, Rightarrow, from=0, to=1]
	\arrow[shorten <=4pt, shorten >=4pt, Rightarrow, from=2, to=3]
	\arrow[shorten <=4pt, shorten >=4pt, Rightarrow, from=6, to=7]
	\arrow[shorten <=4pt, shorten >=4pt, Rightarrow, from=4, to=5]
\end{tikzcd}\]

Now consider the following composite:
\[
[v \theta' \widehat{u},1] = [v,1] \circ [\theta' \widehat{u},1] = [v,1] \circ [u,g] \circ [1,h] = [1,\widehat{g}\psi] \circ [1,h] = [1,\widehat{g}\psi h]
\]

This composite belongs both in $\ce_X$ and $\cm_X$, so as in the proof of Lemma \ref{lemma_isossubcxcm} we obtain that $(\widehat{g}\psi) h$ is an isomorphism that we denote by $\Theta$. This implies that $\Theta^{-1}(\widehat{g}\psi) h = 1$ and so $h$ is a split monomorphism. Since $h (\widehat{g}\psi) = 1$ by Equation \eqref{eq_ugvhinverse}, $h$ is a split epimorphism. Thus $h$ is an isomorphism and by similar reasoning, $v$ is also an isomorphism. Hence $[v,h]$ is an isomorphism in $\cnr{X}$ with the inverse being given by $[v^{-1},1] \circ [1,h^{-1}]$.
\end{proof}

Note that for $X = \pbsq{\cc}^{v}$ the above lemma gives the usual folklore characterization of isomorphisms in the category $\spn{\cc}$ of spans.

\begin{lemma}\label{lemma_isossupcxcm}
Let $X$ be a double category in which every top-right corner can be filled into a square and every square is bicartesian. Assume in addition that $X$ is invariant. Then:
\[
\ce_X \cap \cm_X \supseteq \{ \text{isomorphisms in } \cnr{X} \}.
\]
\end{lemma}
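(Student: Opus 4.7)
The plan is to start with an arbitrary isomorphism $[u,g]: a \to b$ in $\cnr{X}$, represented by a vertical morphism $u: a \to a'$ and a horizontal morphism $g: a' \to b$. By the preceding lemma, both $u$ and $g$ must themselves be isomorphisms. I would then exhibit two separate 2-cells showing that $[u,g]$ lies simultaneously in $\ce_X$ and in $\cm_X$, invoking horizontal and vertical invariance of $X$ respectively.

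For the inclusion $[u,g] \in \ce_X$, I would apply horizontal invariance to the pair of invertible horizontal morphisms $g: a' \to b$ and $1_b: b \to b$ together with the identity vertical morphism $1_b$ on the right edge. This produces a unique filler square whose left vertical edge is some morphism $\theta: a' \to b$. By Definition \ref{defi2cell}, this square is exactly a 2-cell $(u, g) \Rightarrow (\theta \circ u, 1_b)$, witnessing the equality $[u,g] = [\theta \circ u, 1_b]$ in $\cnr{X}$, which exhibits $[u,g]$ as a vertical corner.

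For the inclusion $[u,g] \in \cm_X$, the argument is dual. I would apply vertical invariance (that is, horizontal invariance in the transpose $X^T$) to the pair of invertible vertical morphisms $u^{-1}: a' \to a$ on the left and $1_b$ on the right, together with the horizontal morphism $g$ on top. This yields a unique filler square whose bottom horizontal edge is some morphism $h: a \to b$. Since $u^{-1} \circ u = 1_a$, this square constitutes a 2-cell $(u, g) \Rightarrow (1_a, h)$, so $[u,g] = [1_a, h] \in \cm_X$.

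I anticipate no serious obstacle: the two invariance hypotheses of $X$ are precisely engineered to provide the filler squares that the 2-cell definition requires, and the previous lemma directly supplies the inverse $u^{-1}$ needed in the $\cm_X$ argument. The only real bookkeeping is keeping track of orientation conventions — which sides of each filler are prescribed by the invariance hypothesis versus read off from the resulting square — and ensuring that the directions of the composites $\theta \circ u$ and $u^{-1} \circ u$ match what Definition \ref{defi2cell} demands.
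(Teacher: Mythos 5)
Your proof is correct and follows essentially the same route as the paper's: the paper likewise cites the previous lemma to get that $u$ and $g$ are invertible, then uses horizontal invariance to produce the square exhibiting $[u,g]=[\theta u,1]\in\ce_X$ and vertical invariance (``dually'') to land in $\cm_X$. The one bookkeeping point you flagged is real but harmless: vertical invariance, being horizontal invariance of $X^T$, most directly hands you a square with left edge $u$ that is a 2-cell $(1_a,h)\Rightarrow(u,g)$ rather than one with left edge $u^{-1}$ giving $(u,g)\Rightarrow(1_a,h)$ — but since equivalence of corners is generated by zigzags of 2-cells (and, here, every 2-cell is vertically invertible by Lemma \ref{lemmaXpscr_everysquareopcart}), either orientation yields $[u,g]=[1_a,h]\in\cm_X$.
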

\begin{proof}

We will show that when $X$ is horizontally invariant, we have:
\[
[u,g] \in \cnr{X}\text{, }g\text{ is an isomorphism }\Rightarrow [u,g] \in \ce_X.
\]

Let $[u,g]$ be such a corner. From horizontal invariance we get the square (pictured below), that exhibits the equality $[u,g] = [\theta u, 1]$:
\[\begin{tikzcd}
	a \\
	{a'} & b \\
	b & b
	\arrow["u"', from=1-1, to=2-1]
	\arrow[""{name=0, anchor=center, inner sep=0}, "g", from=2-1, to=2-2]
	\arrow[Rightarrow, no head, from=2-2, to=3-2]
	\arrow[""{name=1, anchor=center, inner sep=0}, Rightarrow, no head, from=3-1, to=3-2]
	\arrow["\theta"', from=2-1, to=3-1]
	\arrow["{\theta u}"', curve={height=24pt}, from=1-1, to=3-1]
	\arrow[shorten <=4pt, shorten >=4pt, Rightarrow, from=0, to=1]
\end{tikzcd}\]
Thus, $[u,g] \in \ce_X$. Dually, if $X$ is vertically invariant, we have:
\[
[u,g] \in \cnr{X}\text{, }u\text{ is an isomorphism }\Rightarrow [u,g] \in \cm_X.
\]

Now if $[u,g]$ is an isomorphism, by previous lemma both $u,g$ are isomorphisms, and by the above implications $[u,g]$ belongs to both $\ce_X$ and $\cm_X$.
\end{proof}

\begin{lemma}\label{lemma_orthog}
Let $X$ be a double category in which every top-right corner can be filled into a square and every square is bicartesian. Assume further that every top-left corner in $X^{v}$ is jointly monic. Then the $(\ce_X,\cm_X)$-factorization of a morphism in $\cnr{X}$ is unique up to a unique morphism.
\end{lemma}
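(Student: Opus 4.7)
The plan is to reduce uniqueness of the connecting morphism to the uniqueness of 2-cell data, which is the content of joint monicity. Existence of a filler is already provided by Proposition~\ref{THMcancprops_weakorthog}: the corner equivalence $(e,m) \sim (e',m')$ produces, via Lemma~\ref{lemmaXpscr_everysquareopcart}, a single 2-cell $\nu \colon (e,m) \Rightarrow (e',m')$ whose vertical component $\theta_0$ gives the canonical filler $[\theta_0, 1_{c'}]$. The task is to show that any other filler $\phi$ coincides with this one.

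By right cancellation of $\ce_X$ in Proposition~\ref{THMcancprops_weakorthog}, one may write $\phi = [s, 1_{c'}]$ for some vertical $s \colon c \to c'$. The two filler conditions become corner equivalences $(se, 1_{c'}) \sim (e', 1_{c'})$ and $(s, m') \sim (1_c, m)$, each witnessed by a single 2-cell in $X$. Denote them by $\alpha$, whose essential part $\alpha'$ (in the sense of the decomposition \eqref{PIC_twocellforbicrosseddblcat}) is a square of boundary $(1_{c'}, 1_{c'}, \theta^{\alpha}, 1_{c'})$ with $\theta^{\alpha} s e = e'$, and $\gamma$, a square of boundary $(m, m', s, 1_b)$ that simultaneously represents a 2-cell $(e,m) \Rightarrow (se, m')$ with vertical component $s$.

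The heart of the argument is the construction of a 2-cell $(e,m) \Rightarrow (e',m')$ whose vertical component is the composite $\theta^{\alpha}s$. I would horizontally compose $\alpha'$ with the vertical identity square on $m'$ to produce a square $\delta$ of boundary $(m', m', \theta^{\alpha}, 1_b)$, which is a 2-cell $(se, m') \Rightarrow (e', m')$, and then stack $\delta$ vertically below $\gamma$. The composite is a square of boundary $(m, m', \theta^{\alpha}s, 1_b)$, hence a 2-cell $(e,m) \Rightarrow (e',m')$. Its essential part and that of $\nu$ are squares of the shape required in Definition~\ref{defi_jointly_monic} and satisfy the equations $\theta^{\alpha}s \cdot e = e' = \theta_0 \cdot e$ and $m\psi^{\ast} = m' = m\psi_0$; joint monicity of the bottom-left corner $(e,m)$ (the content, translated from $X^v$ to $X$, of the hypothesis) therefore forces $\theta^{\alpha}s = \theta_0$.

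To conclude, observe that $\alpha'$ itself is a 2-cell $(1_{c'}, 1_{c'}) \Rightarrow (\theta^{\alpha}, 1_{c'})$, so $[\theta^{\alpha}, 1_{c'}] = 1_{c'}$ in $\cnr{X}$. Filling the middle corner by the horizontal identity square on $\theta^{\alpha}$, the composition $[\theta^{\alpha}, 1_{c'}] \circ [s, 1_{c'}]$ equals $[\theta^{\alpha}s, 1_{c'}]$, and thus
\[
\phi = [s, 1_{c'}] = 1_{c'} \circ [s, 1_{c'}] = [\theta^{\alpha}, 1_{c'}] \circ [s, 1_{c'}] = [\theta^{\alpha}s, 1_{c'}] = [\theta_0, 1_{c'}],
\]
which is the canonical filler, establishing uniqueness. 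The main technical obstacle I anticipate lies in identifying the essential part of the composite $\delta \circ \gamma$: because the horizontal identity pieces in the PIC decompositions of $\gamma$ and $\delta$ lie over different horizontals $m$ and $m'$, the PIC decomposition of the vertical composite is not a direct product of the two decompositions, and one must verify carefully that the essential part carries left vertical $\theta^{\alpha}s$ and a bottom horizontal $\psi^{\ast}$ satisfying $m\psi^{\ast} = m'$ before joint monicity can be invoked.
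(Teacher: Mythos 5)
Your proof is correct and follows essentially the same strategy as the paper's: both extract 2-cells from the two commutativity conditions on a candidate filler, assemble them into squares of the shape required by Definition \ref{defi_jointly_monic}, and invoke joint monicity to force the vertical component of the filler to agree with that of the canonical one. The only organizational differences are that you first normalize the filler to the form $[s,1]$ via the right cancellation property of $\ce_X$ (the paper works with a general $[s,t]$ throughout) and that you apply joint monicity at the corner $(e,m)$ rather than at $(se,m't)$; both variants go through, and the technical point you flag about the essential part of $\delta \circ \gamma$ is resolved exactly as you anticipate, by the uniqueness clause of opcartesianness in $X^*$ of the vertical identity square on $m$.
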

\begin{proof}
Assume that $[e,m] = [e',m']$ are two $(\ce_X,\cm_X)$-factorizations of a morphism in $\cnr{X}$. We wish to show that there is a unique morphism between them:
\begin{equation}\label{eqmorfismusfaktorizaci}
\begin{tikzcd}
	a && {a'} && b \\
	a && {a''} && b
	\arrow["{[e,1]}", from=1-1, to=1-3]
	\arrow["{[1,m]}", from=1-3, to=1-5]
	\arrow["{[e',1]}"', from=2-1, to=2-3]
	\arrow["{[1,m']}"', from=2-3, to=2-5]
	\arrow[Rightarrow, no head, from=1-1, to=2-1]
	\arrow[Rightarrow, no head, from=1-5, to=2-5]
	\arrow[dashed, from=1-3, to=2-3]
\end{tikzcd}
\end{equation}

As in the proof of Theorem \ref{THMcancprops_weakorthog}, one such morphism is given by the corner $[\theta,1]$, where $\theta$ is the domain of the 2-cell square between $(e,m)$ and $(e',m')$:
\[\begin{tikzcd}
	a && a \\
	{a'} && {a'} && b \\
	{a''} && {a'} && b
	\arrow["e", from=1-3, to=2-3]
	\arrow["m", from=2-3, to=2-5]
	\arrow["e"', from=1-1, to=2-1]
	\arrow["\theta"', from=2-1, to=3-1]
	\arrow[""{name=0, anchor=center, inner sep=0}, "\psi"', from=3-1, to=3-3]
	\arrow[Rightarrow, no head, from=2-3, to=3-3]
	\arrow[""{name=1, anchor=center, inner sep=0}, Rightarrow, no head, from=2-1, to=2-3]
	\arrow["m"', from=3-3, to=3-5]
	\arrow[Rightarrow, no head, from=2-5, to=3-5]
	\arrow[Rightarrow, no head, from=1-1, to=1-3]
	\arrow["{e'}"', curve={height=18pt}, from=1-1, to=3-1]
	\arrow["{m'}"', curve={height=18pt}, from=3-1, to=3-5]
	\arrow["\beta", shorten <=4pt, shorten >=4pt, Rightarrow, from=1, to=0]
\end{tikzcd}\]

Assume that there is a different morphism $[s,t]: a' \to a''$ making both squares in \eqref{eqmorfismusfaktorizaci} commute. The commutativity of these two squares gives the following 2-cells:
\[\begin{tikzcd}
	a && a \\
	{a'} && {a'} &&&& {a'} && {a'} \\
	x && x && {a''} && x && x && {a''} && b \\
	{a''} && x && {a''} && {a'} && x && {a''} && b
	\arrow["s", from=2-3, to=3-3]
	\arrow["t", from=3-3, to=3-5]
	\arrow["s"', from=2-1, to=3-1]
	\arrow["{\theta'}"', from=3-1, to=4-1]
	\arrow[""{name=0, anchor=center, inner sep=0}, "{\psi'}"', from=4-1, to=4-3]
	\arrow[Rightarrow, no head, from=3-3, to=4-3]
	\arrow[""{name=1, anchor=center, inner sep=0}, Rightarrow, no head, from=3-1, to=3-3]
	\arrow["t"', from=4-3, to=4-5]
	\arrow[Rightarrow, no head, from=3-5, to=4-5]
	\arrow[curve={height=18pt}, Rightarrow, no head, from=4-1, to=4-5]
	\arrow["e", from=1-1, to=2-1]
	\arrow["e", from=1-3, to=2-3]
	\arrow[Rightarrow, no head, from=1-1, to=1-3]
	\arrow["{m'}", from=3-11, to=3-13]
	\arrow["s"', from=2-7, to=3-7]
	\arrow["s", from=2-9, to=3-9]
	\arrow[""{name=2, anchor=center, inner sep=0}, Rightarrow, no head, from=3-7, to=3-9]
	\arrow[Rightarrow, no head, from=3-9, to=4-9]
	\arrow["t", from=3-9, to=3-11]
	\arrow["t"', from=4-9, to=4-11]
	\arrow[Rightarrow, no head, from=3-13, to=4-13]
	\arrow["{m'}"', from=4-11, to=4-13]
	\arrow[""{name=3, anchor=center, inner sep=0}, "{\widetilde{\psi}}"', from=4-7, to=4-9]
	\arrow["{\widetilde{\theta}}"', from=3-7, to=4-7]
	\arrow[Rightarrow, no head, from=2-7, to=2-9]
	\arrow[curve={height=18pt}, Rightarrow, no head, from=2-7, to=4-7]
	\arrow["{e'}"', curve={height=24pt}, from=1-1, to=4-1]
	\arrow["m"', curve={height=24pt}, from=4-7, to=4-13]
	\arrow["{\beta'}", shorten <=4pt, shorten >=4pt, Rightarrow, from=1, to=0]
	\arrow["{\widetilde{\beta}}", shorten <=4pt, shorten >=4pt, Rightarrow, from=2, to=3]
\end{tikzcd}\]

Assume now we had the following:
\begin{align}\label{eqthetathetatilda}
\begin{split}
\theta \widetilde{\theta} &= \theta', \\
\widetilde{\psi} \psi &= \psi'.
\end{split}
\end{align}

The square $\beta'$ would then exhibit the equality $[s,t] = [\theta, 1]$:
\[\begin{tikzcd}
	{a'} && {a'} \\
	x && x && {a''} \\
	{a''} && x && {a''}
	\arrow[Rightarrow, no head, from=1-1, to=1-3]
	\arrow[Rightarrow, no head, from=2-5, to=3-5]
	\arrow[""{name=0, anchor=center, inner sep=0}, Rightarrow, no head, from=2-1, to=2-3]
	\arrow[Rightarrow, no head, from=2-3, to=3-3]
	\arrow["s"', from=1-1, to=2-1]
	\arrow["s", from=1-3, to=2-3]
	\arrow["t", from=2-3, to=2-5]
	\arrow["t"', from=3-3, to=3-5]
	\arrow["{\theta'}"', from=2-1, to=3-1]
	\arrow[""{name=1, anchor=center, inner sep=0}, "{\psi'}"', from=3-1, to=3-3]
	\arrow["\theta"', curve={height=24pt}, from=1-1, to=3-1]
	\arrow[curve={height=24pt}, Rightarrow, no head, from=3-1, to=3-5]
	\arrow["{\beta'}", shorten <=4pt, shorten >=4pt, Rightarrow, from=0, to=1]
\end{tikzcd}\]

Because the corner $(se, m't)$ is jointly monic in $X^{v}$, to show \eqref{eqthetathetatilda} it suffices to show:
\begin{align*}
\theta \widetilde{\theta} se &= \theta' se, \\
m't \widetilde{\psi} \psi &= m't \psi'.
\end{align*}
The first equality holds because:
\[
\theta \widetilde{\theta} s e = \theta e = e' = \theta' s e,
\]
while the second equality holds because:
\[
m't\widetilde{\psi} \psi = m \psi = m' = m' t \psi'.
\]
\end{proof}

We therefore propose the following terminology:

\begin{defi}
By an (orthogonal) \textit{factorization double category} we mean a double category $X$ with the following properties:
\begin{itemize}
\item $X$ is invariant,
\item every top-right corner in $X$ can be filled into a square and every square is bicartesian,
\item every top-left corner in $X^{v}$ is jointly monic.
\end{itemize}
Denote by $\factdbl$ the full subcategory of $\text{Dbl}$ consisting of factorization double categories.
\end{defi}

\begin{pozn}
Any factorization double category is automatically flat: Given two squares $\alpha, \lambda$ with the same boundary, by Lemma \ref{LEMMA_bicartdblopcart} there exists a unique square $\beta$ as follows:
\[\begin{tikzcd}
	a && a && b && a &&&& b \\
	d && d && d & {=} \\
	d && d && d && d &&&& c
	\arrow[""{name=0, anchor=center, inner sep=0}, "f", from=1-3, to=1-5]
	\arrow["g", from=1-5, to=2-5]
	\arrow["u"', from=1-3, to=2-3]
	\arrow[""{name=1, anchor=center, inner sep=0}, "v"', from=2-3, to=2-5]
	\arrow[""{name=2, anchor=center, inner sep=0}, "\psi"', from=3-1, to=3-3]
	\arrow["\theta"', from=2-1, to=3-1]
	\arrow[""{name=3, anchor=center, inner sep=0}, "f", from=1-7, to=1-11]
	\arrow["g", from=1-11, to=3-11]
	\arrow["u"', from=1-7, to=3-7]
	\arrow[""{name=4, anchor=center, inner sep=0}, "v"', from=3-7, to=3-11]
	\arrow[""{name=5, anchor=center, inner sep=0}, Rightarrow, no head, from=2-1, to=2-3]
	\arrow[Rightarrow, no head, from=2-3, to=3-3]
	\arrow[Rightarrow, no head, from=1-1, to=1-3]
	\arrow[Rightarrow, no head, from=2-5, to=3-5]
	\arrow["u"', from=1-1, to=2-1]
	\arrow["v"', from=3-3, to=3-5]
	\arrow["v"', curve={height=18pt}, from=3-1, to=3-5]
	\arrow["u"', curve={height=18pt}, from=1-1, to=3-1]
	\arrow["\lambda", shorten <=4pt, shorten >=4pt, Rightarrow, from=0, to=1]
	\arrow["\alpha", shorten <=9pt, shorten >=9pt, Rightarrow, from=3, to=4]
	\arrow["{\exists ! \beta}", shorten <=4pt, shorten >=4pt, Rightarrow, from=5, to=2]
\end{tikzcd}\]
Now in $X^{v}$ the corner $(u,v)$ is jointly monic and we have $\theta u = 1_d u$, $v \psi = v 1_d$. Thus $\theta = 1_d$ and $v = 1_d$. Invariance now forces $\gamma = \Box_d$, the identity square on the morphism $1_d$ and so $\alpha = \lambda$.
\end{pozn}

Combining Lemmas \ref{lemma_isossubcxcm}, \ref{lemma_isossupcxcm}, \ref{lemma_orthog} we obtain:

\begin{prop}\label{THM_dbl_to_ofs}
Let $X$ be a factorization double category. Then the classes $(\ce_X,\cm_X)$ of vertical and horizontal corners form an orthogonal factorization system on the category $\cnr{X}$.
\end{prop}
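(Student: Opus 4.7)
The plan is to verify the two axioms of an orthogonal factorization system for $(\ce_X,\cm_X)$ by directly assembling the three preceding lemmas, each of which uses exactly one of the three defining properties of a factorization double category.

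\textbf{Existence of factorizations.} The equality $\cnr{X} = \cm_X \circ \ce_X$ was already established in Proposition \ref{THMcancprops_weakorthog} for any crossed double category, and every factorization double category is crossed (in fact every top-right corner can be filled, and bicartesian squares are in particular opcartesian). Thus every morphism in $\cnr{X}$ admits at least one factorization $[u,g] = [1,g] \circ [u,1]$ with $[u,1] \in \ce_X$ and $[1,g] \in \cm_X$.

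\textbf{Uniqueness up to a unique comparison.} Given two such factorizations $[e,m] = [e',m']$, Lemma \ref{lemma_orthog} (applicable because every top-left corner in $X^{v}$ is jointly monic) produces a morphism $[\theta,1]$ making both comparison squares commute, and shows that any other candidate $[s,t]$ must coincide with it. This is exactly the required existence-and-uniqueness of the comparison morphism.

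\textbf{Characterization of the intersection.} The inclusion $\ce_X \cap \cm_X \subseteq \{\text{isomorphisms}\}$ is Lemma \ref{lemma_isossubcxcm}, which uses only that every square is bicartesian (so that the 2-cell witnessing $[u,1]=[1,h]$ forces $\theta$, and hence $u$, to be invertible). The reverse inclusion $\{\text{isomorphisms}\} \subseteq \ce_X \cap \cm_X$ is Lemma \ref{lemma_isossupcxcm}, which uses horizontal and vertical invariance to show that any corner $[u,g]$ with $g$ (resp.\ $u$) invertible lies in $\ce_X$ (resp.\ $\cm_X$); applied to an isomorphism in $\cnr{X}$, which by the preceding lemma has both $u$ and $g$ invertible, this places $[u,g]$ in both classes.

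Combining these three ingredients gives both defining conditions of an orthogonal factorization system on $\cnr{X}$. There is no genuine obstacle left to overcome: the whole work has been absorbed into the three lemmas, and what remains is only to check that the hypotheses of a factorization double category supply exactly the hypotheses each lemma requires (invariance for Lemma \ref{lemma_isossupcxcm}, bicartesianness and fillability of top-right corners for Lemma \ref{lemma_isossubcxcm}, and joint monicity in $X^{v}$ for Lemma \ref{lemma_orthog}), which they manifestly do.
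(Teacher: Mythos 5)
Your proposal is correct and follows exactly the paper's own route: the paper's proof is literally ``Combining Lemmas \ref{lemma_isossubcxcm}, \ref{lemma_isossupcxcm}, \ref{lemma_orthog} we obtain,'' together with the existence of factorizations and closure properties already recorded in Proposition \ref{THMcancprops_weakorthog}. You have simply made explicit which hypothesis of a factorization double category feeds into which lemma, which is a faithful expansion of the same argument.
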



\color{black}
\begin{pr}[Partial maps]

Let $\cc$ be a category with pullbacks and consider the double category $\text{MPbSq}(\cc)^{v}$. It is obviously flat and invariant with very square bicartesian. We have seen that corners in its vertical dual are jointly monic in Example \ref{prmpbsq_pbsqsjointmonicity}. $\text{MPbSq}(\cc)^{v}$ is thus a factorization double category.

Combined with the description of the category of corners from Example \ref{prparc} and Proposition \ref{THM_dbl_to_ofs} we obtain that the category $\text{Par}(\cc)$ of objects and partial maps admits an orthogonal factorization system given by vertical corners followed by horizontal ones:
\[\begin{tikzcd}
	A &&&& A \\
	{A'} && {A'} && A && B
	\arrow[Rightarrow, no head, from=2-1, to=2-3]
	\arrow["\iota", hook, from=2-1, to=1-1]
	\arrow["f"', from=2-5, to=2-7]
	\arrow[Rightarrow, no head, from=1-5, to=2-5]
\end{tikzcd}\]
In \cite{restrcats} these are called the \textit{domains} and \textit{total maps} in $\cc$.
\end{pr}

\begin{pr}[Categories and cofunctors]
If $\ce$ is a category with pullbacks, the double category $\text{BOFib}(\ce)^{v}$ is a factorization double category. In the category\\ $\cnr{\text{BOFib}(\ce)^{v}} = \text{Cof}(\ce)$ every morphism can be factored as (the opposite of) a bijection on objects functor followed by a discrete opfibration (as mentioned in \cite[Theorem 18]{bryce_cof}). By the results in this section, these classes form an orthogonal factorization system on $\text{Cof}(\ce)$.
\end{pr}

\begin{pr}\label{pr_xp_fib}
Given a fibration $P: \ce \to \cb$, there is an associated sub-double category $X_P \subseteq \sq{\ce}$ whose vertical morphisms are $P$-vertical morphisms (those that are sent to isomorphisms by $P$), horizontal morphisms are cartesian lifts of morphisms in $\cb$, and squares are commutative squares in $\ce$.
\begin{prop}
$X_P$ is a factorization double category.
\end{prop}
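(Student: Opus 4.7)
The plan is to verify each of the three defining conditions of a factorization double category for $X_P$ in turn: invariance, top-right bicrossedness with every square bicartesian, and joint monicity of every top-left corner in $X_P^{v}$.

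Invariance and the corner-filling property follow from standard facts about fibrations. For horizontal invariance, an invertible horizontal morphism is a cartesian lift of an isomorphism in $\cb$, hence itself an isomorphism in $\ce$; given such invertibles $g, h$ and a $P$-vertical $u$, the unique filler is $v := h^{-1} u g$, which is $P$-vertical because $P(v)$ is a composite of isomorphisms, and uniqueness follows from flatness of $\sq{\ce}$. Vertical invariance is dual, using that cartesian morphisms are closed under pre- and post-composition with isomorphisms. For top-right corner filling, a cartesian $g\colon a \to b$ composed with a $P$-vertical $u\colon b \to c$ yields $u g$, which factors as a $P$-vertical morphism $\phi\colon a \to d$ followed by a cartesian lift $\bar f\colon d \to c$ of $P(u g)$; this is the required filler square in $X_P$.

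For the claim that every square $\lambda$ in $X_P$ is bicartesian, I verify the two opcartesianness conditions directly. Given a square $\alpha$ extending the right side of $\lambda$ by a further $P$-vertical $v$, with $\alpha$'s bottom-left corner $(w, h)$, the comparison morphism $\theta\colon \hat b \to d$ from the bottom-left object of $\lambda$ to that of $\alpha$ is constructed as the unique lift through the cartesian bottom edge $h$ of $\alpha$ satisfying $h \theta = v \hat g$. The required equality $\theta \hat u = w$ then follows from the cartesian uniqueness clause for $h$, since both sides have the same $P$-image and compose equally with $h$. The resulting square $\beta$ with boundary $(\theta, \hat g, v, h)$ is the opcartesian filler. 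Opcartesianness in $X_P^{*}$ is proved analogously by the same cartesian universal property applied in the dual configuration.

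The delicate step is joint monicity of every top-left corner in $X_P^{v}$. Such a corner corresponds in $\ce$ to a $P$-vertical $\pi_1\colon a \to a'$ together with a cartesian $\pi_2\colon a' \to b$. Reading the shape of the squares $\kappa_i$ from Definition \ref{defi_jointly_monic} in $\sq{\ce}^{v}$ gives the commutation $\psi \theta = 1_{a'}$ with $\psi$ cartesian and $\theta$ $P$-vertical; hence $P(\psi)$ is an isomorphism, so $\psi$ is itself an isomorphism in $\ce$ with $\theta = \psi^{-1}$, and similarly for $\psi', \theta'$. The hypotheses $\pi_1 \theta = \pi_1 \theta'$ and $\pi_2 \psi = \pi_2 \psi'$ in $X_P^{v}$ translate in $\ce$ to $\theta \pi_1 = \theta' \pi_1$ and $\pi_2 \psi = \pi_2 \psi'$, equivalently $\alpha \pi_1 = \pi_1$ and $\pi_2 \alpha = \pi_2$ for $\alpha := \psi (\psi')^{-1}\colon a' \to a'$. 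Applying $P$ to the first equation and using that $P(\pi_1)$ is an isomorphism forces $P(\alpha) = 1_{P(a')}$. Now $\alpha$ is a $P$-vertical endomorphism over the identity satisfying $\pi_2 \alpha = \pi_2 = \pi_2 \cdot 1_{a'}$, and the uniqueness clause for the cartesian morphism $\pi_2$ (the only lift of $\pi_2$ over the identity is $1_{a'}$) forces $\alpha = 1_{a'}$, whence $\psi = \psi'$ and $\theta = \theta'$. The main obstacle is chasing the vertical-opposite conventions correctly; once the condition is rephrased in $\ce$, the interplay between the $P$-verticality of $\pi_1$ and the universal property of the cartesian $\pi_2$ makes the verification immediate.
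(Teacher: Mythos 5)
Your proof is correct and follows essentially the same strategy as the paper's: cartesian lifts furnish the corner fillers and the opcartesian comparison squares, and joint monicity reduces to the uniqueness clause of the cartesian $\pi_2$ once one observes that $\psi$ is cartesian over an isomorphism and hence invertible (your $\alpha = \psi(\psi')^{-1} = 1$ is the paper's $\psi\circ\theta' = 1$ in disguise). You actually spell out more than the paper does -- in particular the verification that \emph{every} square, not just the chosen filler, is bicartesian, and the invariance checks -- with only minor imprecisions (e.g.\ when lifting through $h$ you should name the morphism downstairs you lift over, namely $Pw\circ(P\hat u)^{-1}$, to get both uniqueness and $P$-verticality of $\theta$).
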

\begin{proof}
Invariance is straightforward. To show joint monicity, assume we're given the data as in Definition \ref{defi_jointly_monic}. Note then that from the existence of squares $\kappa_1, \kappa_2$ in $(X^P)^v$ it follows that $P\theta = (P\psi)^{-1}$, and from $\theta \pi_1 = \theta' \pi_1$ we have $P\theta = P\theta'$. Since $\pi_2$ is a cartesian lift, the following picture forces $\psi \circ \theta' = 1$ and thus $\psi = \psi'$:
\[\begin{tikzcd}
	& {a'} && P && {Pa'} \\
	{a'} && b & \mapsto & {Pa'} && Pb
	\arrow["{\pi_2}", from=1-2, to=2-3]
	\arrow["{\pi_2}"', from=2-1, to=2-3]
	\arrow["{P\pi_2}", from=1-6, to=2-7]
	\arrow["{P\pi_2}"', from=2-5, to=2-7]
	\arrow[Rightarrow, no head, from=2-5, to=1-6]
	\arrow["{\psi \circ \theta'}", from=2-1, to=1-2]
\end{tikzcd}\]
Given a top-right corner $\lambda, u$, the bicartesian filler square is given by the cartesian lift of the pair $(P\lambda, b')$ and the unique canonical comparison morphism:
\[\begin{tikzcd}
	a && b && a & b & {b'} \\
	{a'} && {b'} && Pa && Pb
	\arrow[""{name=0, anchor=center, inner sep=0}, "\lambda", from=1-1, to=1-3]
	\arrow["u", from=1-3, to=2-3]
	\arrow[""{name=1, anchor=center, inner sep=0}, "{\lambda_{P\lambda, b}}"', from=2-1, to=2-3]
	\arrow["{\exists !}"', from=1-1, to=2-1]
	\arrow["\lambda", from=1-5, to=1-6]
	\arrow["u", from=1-6, to=1-7]
	\arrow[""{name=2, anchor=center, inner sep=0}, "P\lambda"', from=2-5, to=2-7]
	\arrow[shorten <=4pt, shorten >=4pt, Rightarrow, from=0, to=1]
	\arrow["\mapsto"{marking}, draw=none, from=1-6, to=2]
\end{tikzcd}\]
\end{proof}
The category of corners $\cnr{X_P}$ is isomorphic to the category $\ce$ via the functor sending an equivalence class $[u,\lambda]$ to the composite $\lambda \circ u$.

From the results in this section we obtain that the category $\ce$ admits an orthogonal factorization system given by the class of $P$-vertical morphisms followed by cartesian morphisms. Factorization systems associated to fibrations are a special case of \textit{simple reflective factorization systems} associated to prefibrations and have been studied in \cite{factorizationOFSrosicky}.
\end{pr}


\begin{nonpr}[Spans]

Let $\cc$ be a category with pullbacks and consider the double category $\text{PbSq}(\cc)^{v}$ of (opposite) pullback squares. This is not a factorization double category because not every top-left corner in $\text{PbSq}(\cc)$ is jointly monic (see Example \ref{prsq_jointmonicity}). We can not thus use Theorem \ref{THM_dbl_to_ofs} to obtain an orthogonal factorization system on $\cnr{\text{PbSq}(\cc)^{v}} = \text{Span}(\cc)$ and in fact, the two canonical classes of spans in $\spn{\cc}$ do not form one\footnote{They do form a weak factorization system as we've seen in Example \ref{pr_spansWFS}}.

To see this, let $\cc = \set$, denote by $\text{sw}: 2 \to 2$ the non-identity automorphism of the two-element set. Note that the class $[sw,1]: 2 \to 2$ in $\text{Span}$ is not the identity morphism. Consider now the span $(!,!): * \to *$:
\[\begin{tikzcd}
	{*} \\
	2 && {*}
	\arrow["{!}", from=2-1, to=1-1]
	\arrow["{!}", from=2-1, to=2-3]
\end{tikzcd}\]

\noindent As both $[sw,1]$ and $[1,1]$ in the place of the dotted line make the diagram below commute, we obtain that the factorization is not unique up to a unique isomorphism and thus the classes are not orthogonal.
\[\begin{tikzcd}
	{*} && 2 && {*} \\
	{*} && 2 && {*}
	\arrow[dashed, from=1-3, to=2-3]
	\arrow["{[!,1]}"', from=1-3, to=1-1]
	\arrow["{[!,1]}", from=2-3, to=2-1]
	\arrow["{[1,!]}"', from=2-3, to=2-5]
	\arrow["{[1,!]}", from=1-3, to=1-5]
	\arrow[Rightarrow, no head, from=1-1, to=2-1]
	\arrow[Rightarrow, no head, from=1-5, to=2-5]
\end{tikzcd}\]

\end{nonpr}
\color{black}
\bigskip

Recall now the assignment $(\ce,\cm) \mapsto D_{\ce,\cm}$ from Construction \ref{conDcecm}. We have:

\begin{prop}\label{lemma_ofs_to_dbl}
Let $(\ce,\cm)$ be an orthogonal factorization system on a category $\cc$. Then $D_{\ce,\cm}$ is a factorization double category. The assignment $(\ce,\cm) \to D_{\ce,\cm}$ induces a functor $\ofs \to \factdbl$.
\end{prop}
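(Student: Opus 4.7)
The plan is to verify directly that $D_{\ce,\cm}$ satisfies the three axioms of a factorization double category --- invariance; every top-right corner fills to a square and every square is bicartesian; joint monicity of top-left corners in the vertical dual --- after which functoriality of the assignment $(\ce, \cm) \mapsto D_{\ce,\cm}$ is immediate from Construction~\ref{conDcecm}. The argument is throughout a systematic application of the orthogonality $\ce \perp \cm$, together with the standard OFS fact that $\ce \cap \cm$ is the class of isomorphisms of $\cc$.

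For invariance, the inverse of a $\cm$-isomorphism is again in $\cm$ (since all isomorphisms lie in $\cm$); given invertible horizontals $g \colon a \to b$ and $h \colon c \to d$ in $\cm$ and a vertical $u \in \ce$, the uniquely determined filler $v = h^{-1} u g$ lies in $\ce$ by closure under composition (with $h^{-1}$ and $g$ being isomorphisms and hence in $\ce$). Vertical invariance is symmetric. Top-right corners $(g, u)$ are filled by taking the $(\ce, \cm)$-factorization of the composite $u g$ in $\cc$.

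The heart of the proof is that every commutative square in $D_{\ce,\cm}$ is bicartesian. Fix such a square $\kappa$ with top $g \in \cm$, right $u \in \ce$, left $\widehat{u} \in \ce$, bottom $\widehat{g} \in \cm$, and let $\alpha$ be any square sharing the top-right corner but further extended downward by a vertical $v \in \ce$. The compatibility of $\alpha$ with $\kappa$ produces a commutative square in $\cc$ with $\widehat{u} \in \ce$ on the left and $h \in \cm$ on the bottom, so orthogonality $\widehat{u} \perp h$ yields a unique diagonal $\theta$. The key technical step is the verification $\theta \in \ce$: factoring $\theta = m' e'$ orthogonally, the identity $w = \theta \widehat{u} = m'(e' \widehat{u})$ exhibits two $(\ce, \cm)$-factorizations of $w \in \ce$ (the other being the trivial $w = 1_d \circ w$), so uniqueness up to unique isomorphism forces $m'$ to be an isomorphism, whence $\theta \in \ce$. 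Opcartesianness of $\kappa$ in the starred dual $(D_{\ce,\cm})^*$ is established by a symmetric orthogonality argument.

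For joint monicity of a top-left corner $(\pi_1, \pi_2)$ in $(D_{\ce,\cm})^v$, the data unwinds in $\cc$ to a composable pair $a \xrightarrow{\pi_1} a' \xrightarrow{\pi_2} b$ with $\pi_1 \in \ce$ and $\pi_2 \in \cm$, together with morphisms $\theta, \theta' \colon a' \to a''$ in $\ce$ and $\psi, \psi' \colon a'' \to a'$ in $\cm$ satisfying $\psi \theta = \psi' \theta' = 1_{a'}$. Under the hypotheses $\theta \pi_1 = \theta' \pi_1$ and $\pi_2 \psi = \pi_2 \psi'$, applying orthogonality $(\theta \pi_1) \perp \pi_2$ to an appropriate commutative square in $\cc$ exhibits both $\psi$ and $\psi'$ as valid diagonals --- the crucial calculation being $\psi' \theta \pi_1 = \psi' \theta' \pi_1 = \pi_1$ --- and so $\psi = \psi'$; a parallel application of $\pi_1 \perp (\pi_2 \psi)$ to a square admitting both $\theta$ and $\theta'$ as diagonals then yields $\theta = \theta'$. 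The main obstacle will be the $\theta \in \ce$ step in the bicartesian verification: this is precisely where the orthogonal (rather than merely weak) factorization hypothesis enters in an essential way.
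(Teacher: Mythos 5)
Your proposal is correct, and it follows the same overall decomposition as the paper (verify invariance, corner-filling plus bicartesianness, and joint monicity in turn, then observe functoriality), but the internal mechanics of the two harder axioms are genuinely different. For bicartesianness the paper only explicitly treats the comparison between two squares with literally the same top-right corner: there $me$ and $m'e'$ are two $(\ce,\cm)$-factorizations of the same composite, so the comparison map is the canonical isomorphism between factorizations and hence automatically a legal vertical morphism; the general opcartesian condition (with the extra vertical extension $v$) is left to the reduction from pre-opcartesian to opcartesian squares recorded earlier in the paper. You instead attack the general condition head-on via the lifting property $\widehat{u} \perp h$, which buys a self-contained verification at the cost of the extra step showing the induced diagonal $\theta$ lies in $\ce$ --- and your argument for that step (comparing the factorization $m'(e'\widehat{u})$ of $w \in \ce$ with the trivial one $1_d \circ w$ to force $m'$ invertible) is sound. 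For joint monicity the paper shows $\psi'\theta$ is an endomorphism of the factorization $(\pi_1,\pi_2)$, deduces $\psi'\theta = 1$, then proves $\theta$ and $\psi$ are mutually inverse isomorphisms before concluding; you instead exhibit $\psi$ and $\psi'$ as two diagonals of the single lifting problem for $(\theta\pi_1) \perp \pi_2$, and then $\theta,\theta'$ as two diagonals for $\pi_1 \perp (\pi_2\psi)$, which is shorter and avoids discussing invertibility of $\theta,\psi$ altogether. Both routes are applications of the two equivalent formulations of orthogonality (unique diagonal fillers versus unique comparison of factorizations); your choice of the filler formulation makes the joint-monicity step cleaner, while the paper's choice makes the bicartesianness step cleaner.
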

\begin{proof}
Let's verify each point:
\begin{itemize}


\item \textbf{Invariance}: We show the horizontal invariance, the vertical invariance is done similarly. Because the classes $\ce,\cm$ are closed under composition, given $u \in \ce$ and two isomorphisms $\theta, \psi$, the composite $\psi^{-1} u \theta$ gives the unique square with the given boundary:
\[\begin{tikzcd}
	a && b \\
	\\
	c && d
	\arrow[""{name=0, anchor=center, inner sep=0}, "{\theta \cong}", from=1-1, to=1-3]
	\arrow["u", from=1-3, to=3-3]
	\arrow[""{name=1, anchor=center, inner sep=0}, "\psi\cong"', from=3-1, to=3-3]
	\arrow[dashed, from=1-1, to=3-1]
	\arrow[shorten <=9pt, shorten >=9pt, Rightarrow, from=0, to=1]
\end{tikzcd}\]

\item \textbf{Filling corners into squares, every square bicartesian}: Consider a top-right corner as pictured below:
\[\begin{tikzcd}
	a && b \\
	\\
	{a'} && c
	\arrow[""{name=0, anchor=center, inner sep=0}, "{m'}", from=1-1, to=1-3]
	\arrow["{e'}", from=1-3, to=3-3]
	\arrow["e"', dashed, from=1-1, to=3-1]
	\arrow[""{name=1, anchor=center, inner sep=0}, "m"', dashed, from=3-1, to=3-3]
	\arrow[shorten <=9pt, shorten >=9pt, Rightarrow, from=0, to=1]
\end{tikzcd}\]
The filler square is given by the $(\ce,\cm)$-factorization of the morphism $e'm'$ in $\cc$. Next, let there be a square as pictured below left:
\[\begin{tikzcd}
	a && b && a && b \\
	{a'} && c && {a''} && c
	\arrow[""{name=0, anchor=center, inner sep=0}, "g", from=1-1, to=1-3]
	\arrow["u", from=1-3, to=2-3]
	\arrow[""{name=1, anchor=center, inner sep=0}, "g", from=1-5, to=1-7]
	\arrow["u", from=1-7, to=2-7]
	\arrow["e"', from=1-1, to=2-1]
	\arrow[""{name=2, anchor=center, inner sep=0}, "m"', from=2-1, to=2-3]
	\arrow["{e'}"', from=1-5, to=2-5]
	\arrow[""{name=3, anchor=center, inner sep=0}, "{m'}"', from=2-5, to=2-7]
	\arrow[shorten <=4pt, shorten >=4pt, Rightarrow, from=0, to=2]
	\arrow[shorten <=4pt, shorten >=4pt, Rightarrow, from=1, to=3]
\end{tikzcd}\]
We wish to show that it is bicartesian. Assume there is another square (pictured above right) with the same top-right corner.

Because $me = m'e'$ are two factorizations of the same morphism, there is a unique isomorphism $\theta \in \ce \cap \cm$ between them (pictured below left). It then gives a comparison square between the first square and the second square, as pictured below right:
\[\begin{tikzcd}
	& a &&& a && b && a && b \\
	{a''} && {a'} && {a'} && c & {=} \\
	& c &&& {a''} && c && {a''} && c
	\arrow[Rightarrow, no head, from=2-7, to=3-7]
	\arrow["e"', from=1-5, to=2-5]
	\arrow[""{name=0, anchor=center, inner sep=0}, "m"{description}, from=2-5, to=2-7]
	\arrow[""{name=1, anchor=center, inner sep=0}, "{m'}"', from=3-5, to=3-7]
	\arrow[""{name=2, anchor=center, inner sep=0}, "g", from=1-5, to=1-7]
	\arrow["u", from=1-7, to=2-7]
	\arrow["e", from=1-2, to=2-3]
	\arrow["m", from=2-3, to=3-2]
	\arrow["{e'}"', from=1-2, to=2-1]
	\arrow["{m'}"', from=2-1, to=3-2]
	\arrow["{\cong \theta}"{description}, from=2-3, to=2-1]
	\arrow[""{name=3, anchor=center, inner sep=0}, "g", from=1-9, to=1-11]
	\arrow["u", from=1-11, to=3-11]
	\arrow["{e'}"', from=1-9, to=3-9]
	\arrow[""{name=4, anchor=center, inner sep=0}, "{m'}"', from=3-9, to=3-11]
	\arrow["\theta"', from=2-5, to=3-5]
	\arrow[shorten <=4pt, shorten >=4pt, Rightarrow, from=2, to=0]
	\arrow[shorten <=9pt, shorten >=9pt, Rightarrow, from=3, to=4]
	\arrow[shorten <=4pt, shorten >=4pt, Rightarrow, from=0, to=1]
\end{tikzcd}\]
This gives the \textbf{existence}. To prove the \textbf{uniqueness}, assume that there is a different comparison square. Its vertical domain map then gives the morphism of factorizations $(e,m),(e',m')$ and is thus forced to be equal to $\theta$. Thus the square is opcartesian in $D_{\ce,\cm}$. The proof that it is also opcartesian in ${(D_{\ce,\cm})}^*$ is done the same way.


\item \textbf{Joint monicity}: Let $(\pi_1, \pi_2)$, $\kappa_1$, $\kappa_2$ be the data in $D_{\ce,\cm}$ as pictured below:
\[\begin{tikzcd}
	a &&&& {a'} && {a'} && {a'} && {a'} \\
	{a'} && b && {a''} && {a'} && {a''} && {a'}
	\arrow["{\pi_2}", from=2-1, to=2-3]
	\arrow["{\pi_1}", from=1-1, to=2-1]
	\arrow[""{name=0, anchor=center, inner sep=0}, "{\psi'}"', from=2-9, to=2-11]
	\arrow["\theta"', from=1-5, to=2-5]
	\arrow[""{name=1, anchor=center, inner sep=0}, "\psi"', from=2-5, to=2-7]
	\arrow[Rightarrow, no head, from=2-7, to=1-7]
	\arrow[""{name=2, anchor=center, inner sep=0}, Rightarrow, no head, from=1-5, to=1-7]
	\arrow["{\theta'}"', from=1-9, to=2-9]
	\arrow[""{name=3, anchor=center, inner sep=0}, Rightarrow, no head, from=1-9, to=1-11]
	\arrow[Rightarrow, no head, from=2-11, to=1-11]
	\arrow["{\kappa_2}"', shorten <=4pt, shorten >=4pt, Rightarrow, from=3, to=0]
	\arrow["{\kappa_1}"', shorten <=4pt, shorten >=4pt, Rightarrow, from=2, to=1]
\end{tikzcd}\]

Assume $\theta \pi_1 = \theta' \pi_1 $ and $\pi_2 \psi = \pi_2 \psi'$. We have the following:
\begin{align*}
\psi' \theta \pi_1 &= \pi_1,\\
\pi_2 &= \pi_2 \psi' \theta.
\end{align*}

In other words, $\psi' \theta$ is an endomorphism of the factorization:
\[\begin{tikzcd}
	a && {a'} && b \\
	a && {a'} && b
	\arrow["{\pi_1}", from=1-1, to=1-3]
	\arrow["{\pi_2}", from=1-3, to=1-5]
	\arrow["{\pi_1}"', from=2-1, to=2-3]
	\arrow["{\pi_2}"', from=2-3, to=2-5]
	\arrow[Rightarrow, no head, from=1-1, to=2-1]
	\arrow[Rightarrow, no head, from=1-5, to=2-5]
	\arrow["{\psi'\circ \theta}", from=1-3, to=2-3]
\end{tikzcd}\]
From orthogonality we get $\psi' \theta = 1$. Since $\psi \circ \theta$ is an $(\ce,\cm)$-factorization of the identity, both of $\psi$ and $\theta$ are isomorphisms together with $\psi' \theta = 1$ we get:
\[
\psi = \theta^{-1} = \psi'.
\]
Applying inverses, we get $\theta = \theta'$.
\end{itemize}
Thus $D_{\ce,\cm}$ is a factorization double category. The functoriality is straightforward.
\end{proof}

Analogous to Theorem \ref{thmekvivalence_sfs_coddiscr}, we have:
\begin{theo}\label{thm_equiv_ofs_factdbl}
The functor $\cnr{-}: \factdbl \to \ofs$ is the equivalence inverse to the functor $D: \ofs \to \factdbl$ and so we have:
\[
\ofs \simeq \factdbl.
\]
\end{theo}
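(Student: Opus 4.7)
The plan is to mirror the proof of Theorem \ref{thmekvivalence_sfs_coddiscr} by constructing natural isomorphisms $1 \cong D \circ \cnr{-}$ and $\cnr{-} \circ D \cong 1$. For a factorization double category $X$, I would define $\Phi_X: X \to D_{\ce_X,\cm_X}$ as the identity on objects, sending a vertical morphism $u$ to $[u,1]$, a horizontal morphism $g$ to $[1,g]$, and a square $\alpha$ of $X$ to the commutative square in $\cnr{X}$ provided by Proposition \ref{prop_naturalityinsquares}. In the opposite direction, given an orthogonal factorization system $(\ce,\cm)$ on $\cc$, I would define $\Psi_{(\ce,\cm)}: \cnr{D_{\ce,\cm}} \to \cc$ as the identity on objects with $[e,m] \mapsto m \circ e$.

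To verify that $\Phi_X$ is an isomorphism of double categories, I would first check bijectivity on vertical and horizontal morphisms. Surjectivity is built into the definitions of $\ce_X$ and $\cm_X$. For injectivity on verticals, suppose $[u,1] = [v,1]$ with $u,v:a \to c$. Because every square in $X$ is bicartesian, hence opcartesian, Lemma \ref{lemmaXpscr_everysquareopcart} supplies a single 2-cell $\beta$ with top, bottom, and right sides all equal to the horizontal identity $1_c$ and some left leg $\theta$ satisfying $\theta u = v$. Since $1_c$ is invertible, horizontal invariance of $X$ forces this filler to be the identity square, so $\theta = 1_c$ and hence $u = v$. Injectivity on horizontals is dual, using vertical invariance.

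The main technical point is bijectivity on squares. Uniqueness is automatic by flatness of factorization double categories, established in the remark preceding Proposition \ref{THM_dbl_to_ofs}. For existence, assume a commutative square $[v,1]\circ[1,g] = [1,h]\circ[u,1]$ in $\cnr{X}$. Computing the left composite by a bicartesian filler $\kappa$ of the corner $(g,v)$ with resulting bottom-left legs $(\hat v, \hat g)$ yields $[\hat v,\hat g]$, while the right composite is trivially $[u,h]$. By Lemma \ref{lemmaXpscr_everysquareopcart}, the equality $[\hat v,\hat g] = [u,h]$ is witnessed by a single 2-cell square $\beta$ with top $\hat g$, bottom $h$, right $1$, and left $\theta$ satisfying $\theta \hat v = u$. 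The vertical composite $\beta \circ \kappa$ is then a square of $X$ with boundary $(u,g,v,h)$, giving the required preimage under $\Phi_X$.

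For the second natural isomorphism, $\Psi_{(\ce,\cm)}$ is well-defined on equivalence classes because a 2-cell in $D_{\ce,\cm}$ between $(e,m)$ and $(e',m')$ is precisely a morphism $\theta$ with $\theta e = e'$ and $m'\theta = m$, forcing $me = m'e'$; functoriality holds because composition in $\cnr{D_{\ce,\cm}}$ is performed by $(\ce,\cm)$-factoring the middle composite. Its inverse $\cc \to \cnr{D_{\ce,\cm}}$ sends $f$ to the class of its $(\ce,\cm)$-factorization, well-defined because any two such factorizations are joined by a unique isomorphism (orthogonality), giving a 2-cell. Naturality of both isomorphisms and preservation of the canonical classes are then routine. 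The only nontrivial step is the existence half of the bijection on squares; everything else is bookkeeping.
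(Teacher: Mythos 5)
Your proposal is correct and follows essentially the same route as the paper: both directions are handled by the same pair of functors ($X \to D_{\ce_X,\cm_X}$ via $e\mapsto[e,1]$, $m\mapsto[1,m]$, and $[e,m]\mapsto m\circ e$ on the other side), injectivity on vertical/horizontal morphisms is obtained from invariance forcing the witnessing 2-cell to be the identity square, and the existence half of the square bijection is produced exactly as in the paper by vertically composing the opcartesian filler of the composite corner with the 2-cell witnessing the equality of corners, with flatness giving uniqueness. No gaps.
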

\begin{proof}
We will again show that there are natural isomorphisms $1 \cong D \circ \cnr{-}$ and $\cnr{-} \circ D \cong 1$:

To see that $1 \cong D \circ \cnr{-}$, let $X$ be a factorization double category.
Consider again the identity-on-objects functor $\ce \to \ce_X$ that sends the morphism $e \mapsto [e,1]$, we would like to show that it's an isomorphism. It is clearly full. To see faithfulness, assume we have $[e,1] = [e',1]$, then there is a 2-cell like this:
\[\begin{tikzcd}
	a \\
	{a'} & {a'} \\
	{a'} & {a'}
	\arrow[""{name=0, anchor=center, inner sep=0}, Rightarrow, no head, from=2-1, to=2-2]
	\arrow["\theta"', from=2-1, to=3-1]
	\arrow[""{name=1, anchor=center, inner sep=0}, Rightarrow, no head, from=3-1, to=3-2]
	\arrow[Rightarrow, no head, from=2-2, to=3-2]
	\arrow["e"', from=1-1, to=2-1]
	\arrow["{e'}"', curve={height=24pt}, from=1-1, to=3-1]
	\arrow[shorten <=4pt, shorten >=4pt, Rightarrow, from=0, to=1]
\end{tikzcd}\]
Since the double category $X$ is invariant, the square is forced to be the identity and thus $e = e'$. Analogously there is an isomorphism $\cm \cong \cm_X$.  Define a double functor $X \to D_{\ce_X, \cm_X}$ so that it is identity on objects, on vertical morphisms sends $e \mapsto [e,1]$ and on horizontal morphisms sends $m \mapsto [1,m]$. Because both double categoroies are flat, to show that it is an isomorphism it's enough to prove the following:
\[\begin{tikzcd}
	a && b &&& a && b \\
	&&& {\text{in } X} & \iff &&&& {\text{in }D_{\ce_X,\cm_X}} \\
	c && d &&& c && d
	\arrow[""{name=0, anchor=center, inner sep=0}, "m", from=1-1, to=1-3]
	\arrow["e", from=1-3, to=3-3]
	\arrow["{e'}"', from=1-1, to=3-1]
	\arrow[""{name=1, anchor=center, inner sep=0}, "{m'}"', from=3-1, to=3-3]
	\arrow["{[e',1]}"', from=1-6, to=3-6]
	\arrow[""{name=2, anchor=center, inner sep=0}, "{[m,1]}", from=1-6, to=1-8]
	\arrow["{[e,1]}", from=1-8, to=3-8]
	\arrow[""{name=3, anchor=center, inner sep=0}, "{[m',1]}"', from=3-6, to=3-8]
	\arrow["\exists"', shorten <=9pt, shorten >=9pt, Rightarrow, from=0, to=1]
	\arrow["\exists"', shorten <=9pt, shorten >=9pt, Rightarrow, from=2, to=3]
\end{tikzcd}\]

The direction ``$\Rightarrow$” follows already from Proposition \ref{prop_naturalityinsquares}. For the direction ``$\Leftarrow$” assume the right square commutes. If we denote $[e,1] \circ [1,m] = [u,g]$, we obtain the square required above right as the following composite, where the upper square is the square used for the composition of the corners, and the lower square exists from the equality $[u,g] = [e',m']$:
\[\begin{tikzcd}
	a && b \\
	c && d \\
	c && d
	\arrow[""{name=0, anchor=center, inner sep=0}, "m", from=1-1, to=1-3]
	\arrow["e", from=1-3, to=2-3]
	\arrow["u"', from=1-1, to=2-1]
	\arrow[""{name=1, anchor=center, inner sep=0}, "g"{description}, from=2-1, to=2-3]
	\arrow["\theta"', from=2-1, to=3-1]
	\arrow[""{name=2, anchor=center, inner sep=0}, "{m'}"', from=3-1, to=3-3]
	\arrow[Rightarrow, no head, from=2-3, to=3-3]
	\arrow["{e'}"', curve={height=24pt}, from=1-1, to=3-1]
	\arrow[shorten <=4pt, shorten >=4pt, Rightarrow, from=0, to=1]
	\arrow[shorten <=4pt, shorten >=4pt, Rightarrow, from=1, to=2]
\end{tikzcd}\]

To see that $\cnr{-} \circ D \cong 1$, let $(\ce,\cm)$ be an orthogonal factorization system on a category $\cc$ and define an identity-on-objects functor $F: \cnr{D_{\ce,\cm}} \to \cc$ so that it sends the class of corners $[e,m]$ to $m \circ e$. This is well defined because if $[e,m] = [e',m']$, there is a 2-cell between $(e,m), (e',m')$ in $D_{\ce,\cm}$:
\[\begin{tikzcd}
	a && a \\
	{a'} && {a'} && b \\
	{a''} && {a'} && b
	\arrow[Rightarrow, no head, from=2-5, to=3-5]
	\arrow["m", from=2-3, to=2-5]
	\arrow["e", from=1-3, to=2-3]
	\arrow[Rightarrow, no head, from=1-1, to=1-3]
	\arrow["e"', from=1-1, to=2-1]
	\arrow[""{name=0, anchor=center, inner sep=0}, Rightarrow, no head, from=2-1, to=2-3]
	\arrow["\theta"', from=2-1, to=3-1]
	\arrow[Rightarrow, no head, from=2-3, to=3-3]
	\arrow[""{name=1, anchor=center, inner sep=0}, "\psi"', from=3-1, to=3-3]
	\arrow["m"', from=3-3, to=3-5]
	\arrow["{e'}"', curve={height=24pt}, from=1-1, to=3-1]
	\arrow["{m'}"', curve={height=24pt}, from=3-1, to=3-5]
	\arrow[shorten <=4pt, shorten >=4pt, Rightarrow, from=0, to=1]
\end{tikzcd}\]
But since squares in this double category are commutative squares in $\cc$, we get: $me = m \psi \theta e = m'e'$. Functoriality of $F$ is straightforward, faithfulness follows from the fact that $(\ce,\cm)$-factorizations are unique up to a unique isomorphism, and fullness follows because every morphism in $\cc$ has an $(\ce,\cm)$-factorization. Thus $F$ is an isomorphism.
\end{proof}

\section{Codescent objects and double categories}\label{sekcecodobjdbl}

The purpose of this section is to put the previous ones into the broader perspective of 2-category theory. In Subsection \ref{subsekce_introcodobj} we give a brief exposition of a 2-categorical colimit called the \textit{codescent object}. In Subsection \ref{subsekce_codobjdbl} we study codescent objects of double categories: if $X$ is crossed, the codescent object is given by the category of corners $\cnr{X}$ - this was in fact the original reason for introducing the category of corners of crossed double categories in \cite{internalalg}. As a colimit, the category $\cnr{X}$ enjoys a universal property: it is the universal category equipped with functors $(F: X_0 \to \cnr{X}, \xi: h(X) \to \cnr{X})$ satisfying a certain \textit{naturality condition} (Proposition \ref{prop_cnriscodobj}).

In Subsection \ref{subsekce_laxmors} we use the results to obtain an explicit description of various lax morphism classifiers. Section \ref{subsekce_sfs} then gives us a conceptual reason for why they admit strict factorization systems.

\subsection{Review of codescent objects}\label{subsekce_introcodobj}
Denote by $\Delta$ the category of finite ordinals and order-preserving maps and by $\Delta_2$ its full subcategory spanned by the ordinals $[0], [1], [2]$. Let $W: \Delta_2 \to \cat$ be the 2-functor that regards each ordinal as a category.

\begin{defi}
Let $\ck$ be a 2-category. By (\textit{strict, reflexive}) \textit{coherence data} in $\ck$ we mean a 2-functor $X: \Delta^{op}_2 \to \ck$. By the \textit{lax codescent object} of $X$ we mean the $W$-weighted colimit of $X$.
\end{defi}

\noindent In elementary terms, a $W$-weighted cocone for coherence data $X$ is a pair $(F,\xi)$ of a 1-cell $F: X_0 \to Y$ and a 2-cell $\xi: Fd_1 \Rightarrow Fd_0$ satisfying the following equations:
\[\begin{tikzcd}
	& {X_1} & {X_0} &&&& {X_1} & {X_0} \\
	{X_2} & {X_1} && Y & {=} & {X_2} && {X_0} & Y \\
	& {X_1} & {X_0} &&&& {X_1} & {X_0}
	\arrow["{d_0}"', from=2-1, to=3-2]
	\arrow["{d_1}"{description}, from=2-1, to=2-2]
	\arrow["{d_0}"', from=3-2, to=3-3]
	\arrow["{d_0}"{description}, from=2-2, to=3-3]
	\arrow["{d_1}"{description}, from=2-2, to=1-3]
	\arrow["F"', from=3-3, to=2-4]
	\arrow["F", from=1-3, to=2-4]
	\arrow["{d_2}", from=2-1, to=1-2]
	\arrow["{d_1}", from=1-2, to=1-3]
	\arrow["{d_0}"', from=2-6, to=3-7]
	\arrow["{d_2}", from=2-6, to=1-7]
	\arrow["{d_0}"', from=1-7, to=2-8]
	\arrow["{d_1}", from=3-7, to=2-8]
	\arrow["{d_0}"', from=3-7, to=3-8]
	\arrow["F"', from=3-8, to=2-9]
	\arrow["F"{description}, from=2-8, to=2-9]
	\arrow["{d_1}", from=1-7, to=1-8]
	\arrow["F", from=1-8, to=2-9]
	\arrow["\xi"', shorten <=6pt, shorten >=6pt, Rightarrow, from=1-3, to=3-3]
	\arrow["\xi", shorten <=2pt, shorten >=2pt, Rightarrow, from=1-8, to=2-8]
	\arrow["\xi", shorten <=2pt, shorten >=2pt, Rightarrow, from=2-8, to=3-8]
\end{tikzcd}\]
\[\begin{tikzcd}
	&&&& {X_0} \\
	{X_0} && {X_1} &&&& Y & {=} & {1_F} \\
	&&&& {X_0}
	\arrow["s"', from=2-1, to=2-3]
	\arrow["{d_0}"', from=2-3, to=3-5]
	\arrow["{d_1}", from=2-3, to=1-5]
	\arrow["F"', from=3-5, to=2-7]
	\arrow["F", from=1-5, to=2-7]
	\arrow["\xi", shorten <=6pt, shorten >=6pt, Rightarrow, from=1-5, to=3-5]
	\arrow[curve={height=-24pt}, Rightarrow, no head, from=2-1, to=1-5]
	\arrow[curve={height=24pt}, Rightarrow, no head, from=2-1, to=3-5]
\end{tikzcd}\]


A $W$-weighted cocone $(F,\xi)$ with apex $Y$ is then the lax codescent object for $X$ if given any other cocone $(G,\psi)$ with apex $Z$, there exists a unique map $\theta: Y \to Z$ such that\footnote{This is the 1-dimensional universal property. We omit mentioning the corresponding 2-dimensional universal property here since for the case we'll be interested in ($\ck = \cat$) it follows automatically from the 1-dimensional one. For the full definition see \cite[2.2]{johnphd}.}:
\begin{align}
\theta F &= G,\\
\theta \xi &= \psi.
\end{align}

\subsection{Codescent objects and double categories}\label{subsekce_codobjdbl}

\begin{lemma}\label{propcoconeistwofuns}
Let $X$ be a double category regarded as a diagram $X: \Delta^{op} \to \cat$. There is a natural bijection between codescent cocones for $X$ and pairs of functors $(F: X_0 \to \cy, \xi: h(X) \to \cy)$ (recall $h(X)$ is the category of objects and horizontal morphisms in $X$) that agree on objects and satisfy the following \textit{naturality condition}:
\[\begin{tikzcd}
	a & b \\
	c & d & {\text{ exists in }X} & \Rightarrow & {Fv \circ \xi(g) = \xi(h) \circ Fu \text{ in } \cy}
	\arrow[""{name=0, anchor=center, inner sep=0}, "g", from=1-1, to=1-2]
	\arrow["v", from=1-2, to=2-2]
	\arrow["u"', from=1-1, to=2-1]
	\arrow[""{name=1, anchor=center, inner sep=0}, "h"', from=2-1, to=2-2]
	\arrow["{\exists \alpha}", shorten <=4pt, shorten >=4pt, Rightarrow, from=0, to=1]
\end{tikzcd}\]
\end{lemma}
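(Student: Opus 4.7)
The plan is to unwind the definition of a codescent cocone for $X:\Delta^{op}\to\cat$ and observe that its three ingredients (the 2-cell, its naturality, and the hexagon/unit axioms) correspond, component by component, to the assignment of $\xi$ on horizontal morphisms, the naturality condition on squares, and the functoriality of $\xi$ on $h(X)$.

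First I would recall that a $W$-weighted cocone for $X$ is a pair $(F,\xi)$ with $F:X_0\to\cy$ a functor and $\xi:Fd_1\Rightarrow Fd_0$ a natural transformation satisfying the hexagon and unit axioms displayed in Subsection \ref{subsekce_introcodobj}. Since $d_0,d_1:X_1\to X_0$ are codomain and domain respectively, a component of $\xi$ is a morphism $\xi_g:F(a)\to F(b)$ indexed by a horizontal morphism $g:a\to b$ in $X$. Setting $\xi(a):=F(a)$ on objects and $\xi(g):=\xi_g$ on morphisms gives a candidate assignment $\xi:h(X)\to\cy$ which automatically agrees with $F$ on objects.

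Next I would translate each axiom on elements. Morphisms in $X_1$ are precisely squares $\alpha$ with $d_1\alpha=u$ and $d_0\alpha=v$, so naturality of $\xi$ as a 2-cell applied to $\alpha$ reads $F(v)\circ\xi(g)=\xi(h)\circ F(u)$ — precisely the naturality condition in the statement. The unit axiom $\xi\cdot s=1_F$ evaluated at $a\in X_0$ gives $\xi(1_a)=1_{F(a)}$, so $\xi$ preserves identities; the hexagon axiom evaluated at a composable pair $(g,g')\in X_2$ (for which $d_2=g$, $d_0=g'$, and $d_1=g'\circ g$) reads $\xi(g'\circ g)=\xi(g')\circ\xi(g)$, so $\xi$ preserves composition. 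Together these say exactly that $\xi:h(X)\to\cy$ is a functor.

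Finally I would read the same correspondence backwards: given functors $F$ and $\xi$ agreeing on objects and satisfying the naturality condition on squares, define $\xi_g$ to be the value of the functor $\xi$ on the horizontal morphism $g$, and observe that the same three translations produce a natural transformation $\xi:Fd_1\Rightarrow Fd_0$ fulfilling the hexagon and unit axioms. The two constructions are patently mutually inverse, and the bijection is natural in $\cy$ by post-composition with functors $\cy\to\cy'$. There is no real obstacle here; the only point requiring care is keeping the conventions on $d_0$ versus $d_1$ and on the three face maps $d_0,d_1,d_2:X_2\to X_1$ straight, so that each codescent axiom can be unambiguously read off on elements.
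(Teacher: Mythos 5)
Your proposal is correct and follows essentially the same route as the paper: both unwind the $W$-weighted cocone data, identify the components of the natural transformation $\xi$ with the values of a functor on $h(X)$, match the unit and hexagon axioms with preservation of identities and composition, and match naturality of $\xi$ at squares with the stated naturality condition. The paper's proof is simply a terser version of the same translation, so there is nothing to flag.
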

\begin{proof}
A cocone $(F,\xi)$ for $X$ consists of a functor $F:X_0 \to \cy$ and a natural transformation $\xi: Fd_1 \Rightarrow Fd_0: X_1 \to \cy$. The cocone axioms mean precisely that given a composable pair of morphisms $(h,g)$ in $h(X)$ and an object $a \in X$, we have:
\begin{align*}
\xi_g \circ \xi_h &= \xi_{g \circ h},\\
\xi_{s(a)} &= 1_{Fa}.
\end{align*}
In other words, $\xi$ induces a functor $h(X) \to \cy$ that sends an object $a \in h(X)$ to $Fa$ and a morphism $g \in h(X)$ to $\xi_g$. The naturality condition above is precisely the condition that $\xi$ is a natural transformation.
\end{proof}


Similarly, the cocone $(F,\xi)$ is the codescent object if and only if the corresponding pair of functors is initial in the sense that given a different pair of functors $(G,\psi)$ satisfying the naturality condition, there is a unique map commuting with the functors:
\[\begin{tikzcd}
	&& \cy \\
	{X_0} &&&& {h(X)} \\
	&& \cz
	\arrow["F", from=2-1, to=1-3]
	\arrow["\xi"', from=2-5, to=1-3]
	\arrow["G"', from=2-1, to=3-3]
	\arrow["\psi", from=2-5, to=3-3]
	\arrow[dashed, from=1-3, to=3-3]
\end{tikzcd}\]

From now on, we will not distinguish between codescent cocones $(F,\xi)$ and pairs of functors satisfying the conditions above.

\begin{prop}[Invariance under transposition]\label{propcodinvariantwrttransp}
Let $X$ be a double category. We have: $\cod{X} \cong \cod{X^T}$.
\end{prop}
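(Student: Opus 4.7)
The plan is to reduce the statement to the reformulation of codescent cocones established in Lemma \ref{propcoconeistwofuns} and then exhibit an obvious symmetry. By that lemma, for any target category $\cy$, a codescent cocone for $X$ with apex $\cy$ is the same thing as a pair of functors $(F : X_0 \to \cy,\; \xi : h(X) \to \cy)$ that agree on objects and satisfy the naturality condition that, for every square in $X$ with top $g$, bottom $h$, left $u$ and right $v$, the identity $Fv \circ \xi(g) = \xi(h) \circ Fu$ holds in $\cy$.

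Next I would apply the same description to $X^T$. Since transposition literally swaps vertical and horizontal morphisms, we have $(X^T)_0 = h(X)$ and $h(X^T) = X_0$; the squares of $X^T$ are the squares of $X$ relabelled so that the former verticals become the new horizontals and vice versa. Consequently, a codescent cocone for $X^T$ with apex $\cy$ is exactly a pair of functors $(F' : h(X) \to \cy,\; \xi' : X_0 \to \cy)$ agreeing on objects such that for every square of $X$ as above, $F'h \circ \xi'(u) = \xi'(v) \circ F'g$ holds in $\cy$.

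I would then define a bijection between cocones for $X$ and cocones for $X^T$ by $(F,\xi) \longleftrightarrow (\xi, F)$, that is, by simply swapping the two functors, which is legitimate because $F$ and $\xi'$ live on the same category, as do $\xi$ and $F'$. Under this involution the two naturality conditions coincide term by term: the equation $Fv \circ \xi(g) = \xi(h) \circ Fu$ for $X$ is, after relabelling $F$ as $\xi'$ and $\xi$ as $F'$, precisely the equation $\xi'(v) \circ F'(g) = F'(h) \circ \xi'(u)$ required for $X^T$. The correspondence is manifestly natural in $\cy$, and by Yoneda this natural bijection of cocone functors yields the desired isomorphism $\cod{X} \cong \cod{X^T}$.

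The argument is essentially unpacking of definitions, and there is no real obstacle; the only point that needs care is the bookkeeping that identifies the categories of vertical/horizontal morphisms of $X^T$ with the horizontal/vertical categories of $X$, together with the corresponding identification of squares. Everything else is a direct verification.
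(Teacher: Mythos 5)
Your proof is correct and follows essentially the same route as the paper: the paper likewise reduces to the one-dimensional universal property and exhibits the natural bijection $\text{Cocone}(X,\cy) \cong \text{Cocone}(X^T,\cy)$ given by $(F,\xi) \mapsto (\xi,F)$, relying on the reformulation of cocones as pairs of functors from Lemma \ref{propcoconeistwofuns}. Your write-up is if anything slightly more explicit about the identifications $(X^T)_0 = h(X)$ and $h(X^T) = X_0$ and the matching of the naturality conditions.
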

\begin{proof}
Because $\cat$ admits cotensors with an arrow, it's enough to show just the one-dimensional universal property\footnote{See \cite[Page 306]{elemobs}}, i.e. show that there's a natural bijection between the sets of $W$-weighted codescent cocones:
\[
\text{Cocone}(X,\cc) \cong \text{Cocone}(X^T, \cc)
\]
The bijection is given by $(F,\xi) \mapsto (\xi,F)$.
\end{proof}

\begin{prop}\label{prop_cnriscodobj}
Let $X$ be a crossed double category. Then the pair of functors $(F: X_0 \to \cnr{X},\xi: h(X) \to \cnr{X})$ sending $u \mapsto [u,1]$, $g \mapsto [1,g]$ is the codescent object of $X$.
\end{prop}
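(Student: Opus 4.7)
The plan is to use Lemma \ref{propcoconeistwofuns} to present codescent cocones as pairs $(F,\xi)$ of functors agreeing on objects and satisfying its naturality condition, and then to verify the one-dimensional universal property by direct construction; the two-dimensional part follows automatically since $\cat$ admits the relevant cotensors, as in the proof of Proposition \ref{propcodinvariantwrttransp}.

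First I would check that the candidate pair $(F : u \mapsto [u,1],\; \xi : g \mapsto [1,g])$ is a codescent cocone. For a square in $X$ with top $g$, right $v$, left $u$, bottom $h$, the required identity $F(v) \circ \xi(g) = \xi(h) \circ F(u)$ reads $[v,1]\circ[1,g] = [1,h]\circ[u,1]$ in $\cnr{X}$, which is exactly the content of Proposition \ref{prop_naturalityinsquares}.

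For the universal property, let $(G : X_0 \to \cz,\; \psi : h(X) \to \cz)$ be any other cocone. Since every morphism in $\cnr{X}$ factors as $[u,g] = [1,g]\circ[u,1] = \xi(g)\circ F(u)$, the requirements $\theta F = G$ and $\theta \xi = \psi$ force $\theta$ to agree with $G$ on objects and to act on morphisms by $\theta([u,g]) := \psi(g)\circ G(u)$; this supplies both uniqueness and the only possible candidate for existence. It then remains to show that $\theta$ is well-defined on equivalence classes and functorial.

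For well-definedness, since the zigzag equivalence on corners is generated by single 2-cells, it suffices to handle one 2-cell $\beta : (u,g)\Rightarrow(u',g')$ from Definition \ref{defi2cell}: viewing $\beta$ as a square with top $g$, right identity, bottom $g'$, and left leg $\theta_0$ satisfying $\theta_0\circ u = u'$, the naturality condition for $(G,\psi)$ gives $\psi(g) = \psi(g')\circ G(\theta_0)$, whence $\psi(g)G(u) = \psi(g')G(\theta_0\circ u) = \psi(g')G(u')$. Preservation of identities is immediate. For composition, recall that $[v,h]\circ[u,g] = [\hat v \circ u,\; h\circ\hat g]$ via an opcartesian filler square with top $g$, right $v$, left $\hat v$, bottom $\hat g$; applying the naturality condition for $(G,\psi)$ to this square yields $\psi(\hat g)G(\hat v) = G(v)\psi(g)$, which rewrites $\theta([v,h]\circ[u,g]) = \psi(h)\psi(\hat g)G(\hat v)G(u)$ as $\psi(h)G(v)\psi(g)G(u) = \theta([v,h])\circ\theta([u,g])$. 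The only substantive point is the well-definedness step under the zigzag equivalence; everything else is a direct transport of the naturality condition from $X$ across the cocone $(G,\psi)$.
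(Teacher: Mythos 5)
Your proposal is correct and follows the same route as the paper's proof: the cocone condition is Proposition \ref{prop_naturalityinsquares}, the formula $\theta([u,g]) = \psi(g)\circ G(u)$ is forced by commutativity, and the remaining verifications (which the paper dismisses as routine) are exactly the well-definedness and functoriality checks you carry out. Your explicit reduction of the zigzag equivalence to a single 2-cell and the use of the naturality condition on the opcartesian filler square are the right way to fill in those omitted details.
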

\begin{proof}
The naturality condition has already been verified in Proposition \ref{prop_naturalityinsquares}. Let now $(G: X_0 \to Y, \psi: h(X) \to Y)$ be another cocone. We have to show that there is a unique functor $\theta: \cnr{X} \to Y$ commuting with the functors:
\[\begin{tikzcd}
	&& {\cnr{X}} \\
	{X_0} &&&& {h(X)} \\
	&& Y
	\arrow["F", from=2-1, to=1-3]
	\arrow["\xi"', from=2-5, to=1-3]
	\arrow["G"', from=2-1, to=3-3]
	\arrow["\psi", from=2-5, to=3-3]
	\arrow[dashed, from=1-3, to=3-3]
\end{tikzcd}\]
The above commutativity forces $\theta(a) = Fa = \xi(a)$ for an object $a \in \cnr{X}$, and on morphisms $\theta([u,g]) = \psi(g) \circ Gu$. It is routine to verify that this mapping is well defined and a functor.
\end{proof}

\begin{pozn}
Note that if $X$ is a general double category, the category of corners can be generalized in terms of generators and relations as follows. The codescent object $\cod{X}$ has objects the objects of $X$, while a morphism is an equivalence class of paths $[f_1, \dots, f_n]$, with $f_i$ being either a vertical or a horizontal morphism of $X$. The equivalence relation on morphisms is then generated by the following:
\begin{align*}
[f_1, f_2] &= [f_2 \circ f_1] \text{ if both }f_1, f_2\text{ are vertical or horizontal},\\
[1_a] &= []_a \text{ if }1_a\text{ is a vertical or a horizontal identity morphism in }X,\\
[g,v] &= [u,h] \text{ if there is a square }\alpha \text{ as pictured below:}
\end{align*}
\[\begin{tikzcd}
	a & b \\
	c & d
	\arrow[""{name=0, anchor=center, inner sep=0}, "g", from=1-1, to=1-2]
	\arrow["v", from=1-2, to=2-2]
	\arrow["u"', from=1-1, to=2-1]
	\arrow[""{name=1, anchor=center, inner sep=0}, "h"', from=2-1, to=2-2]
	\arrow["\alpha", shorten <=4pt, shorten >=4pt, Rightarrow, from=0, to=1]
\end{tikzcd}\]
To give an example, let $G,H$ be groups. We can define a double category $X$ with one object, vertical morphisms being elements of $G$, horizontal morphisms being elements of $H$, such that there are no non-identity squares in $X$. Then $\cod{X} = G \ast H$ is the free product of the groups $G,H$.
\end{pozn}


\subsection{Lax morphism classifiers}\label{subsekce_laxmors}\vphantom{.}


\begin{defi}\label{defi_resolution}
Let $(T,m,i)$ be a 2-monad on a 2-category $\ck$ and let $(A,a)$ be a strict $T$-algebra. By its \textit{resolution}, denoted $\res{A,a}$, we mean the following coherence data in $\talgs$:
\[\begin{tikzcd}
	{T^3A} && {T^2A} && TA
	\arrow["{Ti_A}"{description}, from=1-5, to=1-3]
	\arrow["Ta"{description}, shift right=4, from=1-3, to=1-5]
	\arrow["{m_A}"{description}, shift left=4, from=1-3, to=1-5]
	\arrow["{m_{T^2A}}"{description}, shift left=4, from=1-1, to=1-3]
	\arrow["{Tm_{TA}}"{description}, from=1-1, to=1-3]
	\arrow["{T^2a}"{description}, shift right=4, from=1-1, to=1-3]
\end{tikzcd}\]
\end{defi}

\begin{theo}
Assume the 2-category $\talgs$ admits lax codescent objects of resolutions of strict algebras. Then the inclusion 2-functor $\talgs \to \talgl$ admits a left 2-adjoint:
\[\begin{tikzcd}
	\talgs && \talgl
	\arrow[""{name=0, anchor=center, inner sep=0}, curve={height=24pt}, hook, from=1-1, to=1-3]
	\arrow[""{name=1, anchor=center, inner sep=0}, "{(-)'}"', curve={height=24pt}, dashed, from=1-3, to=1-1]
	\arrow["\dashv"{anchor=center, rotate=-90}, draw=none, from=1, to=0]
\end{tikzcd}\]
This left adjoint is given by the formula:
\[
(A,a)' = \cod{\res{A,a}}.
\]
\end{theo}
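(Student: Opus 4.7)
The plan is to show that for every strict algebra $(B,b)$ there is a natural isomorphism of categories
\[
\talgs(\cod{\res{A,a}}, (B,b)) \cong \talgl((A,a), (B,b)),
\]
and that this is 2-natural in $(B,b)$. By the defining universal property of the codescent object, the left-hand side is naturally isomorphic to the category of codescent cocones on $\res{A,a}$ with apex $(B,b)$ in the 2-category $\talgs$. The heart of the proof is therefore the identification of such cocones with lax morphisms.

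Given a lax morphism $(f,\overline{f}):(A,a) \to (B,b)$ with $\overline{f}: b \cdot Tf \Rightarrow f \cdot a$, define a cocone by
\[
F := b \cdot Tf \colon (TA, m_A) \to (B,b), \qquad \xi := b \cdot T\overline{f}.
\]
Here $F$ is a strict algebra morphism out of the free strict algebra on $A$ (using strictness of $b$ and naturality of $m$); the 2-cell $\xi$ has the correct type $F \cdot m_A \Rightarrow F \cdot Ta$ because $b \cdot m_B = b \cdot Tb$ forces $F \cdot m_A = b \cdot T(b \cdot Tf)$. Moreover, the algebra-compatibility $b \cdot T\xi = \xi \cdot m_{TA}$ required for $\xi$ to be a 2-cell in $\talgs$ reduces to a naturality square for $m$. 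Conversely, from a cocone $(F,\xi)$ extract a lax morphism by setting
\[
f := F \cdot i_A, \qquad \overline{f} := \xi \cdot i_{TA},
\]
using the free-forgetful adjunction to recover $F = b \cdot Tf$, and using the identities $m_A \cdot i_{TA} = 1_{TA}$, $Ta \cdot i_{TA} = i_A \cdot a$, $b \cdot i_B = 1_B$ to check that $\overline{f}$ has the desired source and target.

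The main obstacle will be verifying that the two cocone axioms on $(F,\xi)$ — the unit axiom $\xi \cdot Ti_A = 1_F$ and the ternary equation involving the three face maps $T^3 A \to T^2 A$ — match, respectively, the unit and the multiplication axioms of a lax morphism under this correspondence, and that the two constructions are mutually inverse. These are diagram chases using naturality of $m$ and $i$, 2-functoriality of $T$, and strictness of $b$. The extension of the bijection to 2-cells is then automatic: a modification between cocones is a 2-cell $\sigma: F \Rightarrow F'$ compatible with the $\xi$'s, which under the correspondence is precisely an algebra 2-cell between the associated lax morphisms. Finally, 2-naturality in $(B,b)$ follows because all constructions are built from whiskering and composition that is preserved by strict morphisms, and the counit of the resulting adjunction at $(A,a)$ is the cocone classifying the identity lax morphism on $(A,a)$.
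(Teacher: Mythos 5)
Your proposal is correct: the paper proves this theorem by simply citing Lemma 3.2 of Lack's \emph{Codescent objects and coherence}, and your argument --- identifying codescent cocones under $\res{A,a}$ with apex $(B,b)$ in $\talgs$ with lax morphisms $(A,a) \to (B,b)$ via $F = b \cdot Tf$, $\xi = b \cdot T\overline{f}$, with inverse $f = F \cdot i_A$, $\overline{f} = \xi \cdot i_{TA}$, then deducing the left 2-adjoint from the resulting 2-natural hom-isomorphism --- is precisely the standard proof given there. The only cosmetic slip is writing the algebra-2-cell condition for $\xi$ as whiskering with $m_{T^2A}$ rather than with $m_{TA}$, the structure map of the free algebra $(T^2A, m_{TA})$ on $TA$ (a mismatch of indices that mirrors one already present in the paper's own resolution diagram and does not affect the argument).
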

\begin{proof}
Lemma 3.2 in \cite{codobjcoh}.
\end{proof}

Given a strict $T$-algebra $(A,a)$, the algebra $(A',a') := (A,a)'$ has the property of being a \textit{lax morphism classifier}: there is a lax $T$-morphism $(A,a) \rightsquigarrow (A',a')$ (the unit of the above adjunction) such that for any lax morphism $(F,\overline{F}): (A,a) \to (B,b)$ there exists a unique strict $T$-morphism $G: (A',a') \to (B,b)$ such that the following commutes:
\[\begin{tikzcd}
	A && B \\
	\\
	{A'}
	\arrow[squiggly, from=1-1, to=3-1]
	\arrow["{\forall(F,\overline{F})}", squiggly, from=1-1, to=1-3]
	\arrow["{\exists ! G}"', dashed, from=3-1, to=1-3]
\end{tikzcd}\]

\begin{defi}
A monad $(T',m',i')$ on a category $\ce$ is said to be \textit{cartesian} if $T'$ preserves pullbacks and the naturality squares for $m',i'$ are pullbacks.
\end{defi}

If $T': \ce \to \ce$ is a cartesian monad on a category $\ce$ with pullbacks, it preserves internal categories in $\ce$ and thus induces a cartesian 2-monad $\cat(T')$ on the 2-category $\cat(\ce)$ of internal categories and functors\footnote{See for example \cite[Remark 3.16]{johnphd}.}.

We will make use of the following alternative definition of a discrete opfibration. If $A$ is a category, by $s: A_1 \to A_0$ we mean the function that sends a morphism in $A$ to its domain.


\noindent \textbf{Assumption:} Let $(T,m,i)$ be a 2-monad on $\cat$ of form $\cat(T')$ for a cartesian monad $T'$ on $\set$. Assume that $T$ preserves codescent objects. Denote by $U: \talgs \to \cat$ the forgetful 2-functor.

\begin{prop}\label{prop_resAaTiscoddiscr}
$U\res{A,a}$ is a double category. Its transpose, $U\res{A,a}^T$, is codomain-discrete.
\end{prop}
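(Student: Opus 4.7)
The plan has two steps. First, I would show that $U\res{A,a}$ is a double category; then I would verify codomain-discreteness of its transpose by reducing it to a pullback coming from the cartesianness of $T'$.

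For the first step, since $T'$ is cartesian the induced 2-monad $T = \cat(T')$ on $\cat$ is cartesian, so the paragraph preceding Definition~\ref{defi_resolution} (reiterated in Example~\ref{pr_crosseddblcats}) gives that $\res{A,a}$ is an internal category in $\talgs$. The forgetful 2-functor $U$ preserves limits, and in particular preserves the pullbacks witnessing the internal category axioms, so $U\res{A,a}$ is an internal category in $\cat$, that is, a double category. Its transpose is then automatically a double category as well.

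For the second step, write $X := U\res{A,a}$. Unraveling, the double category $X^T$ has objects $T'(\ob A)$, horizontal morphisms $T'(\mor A)$, vertical morphisms $T^{\prime 2}(\ob A)$, and squares $T^{\prime 2}(\mor A)$. By the characterization of discrete opfibrations recalled in Example~\ref{prbofib}, codomain-discreteness of $X^T$ amounts to a single square of sets being a pullback. Tracing the face functors of $X^T$ through the face maps of the resolution, one finds that this square is
\[
\begin{tikzcd}
T^{\prime 2}(\mor A) \ar[r,"T^{\prime 2}(t)"] \ar[d,"m'_{\mor A}"'] & T^{\prime 2}(\ob A) \ar[d,"m'_{\ob A}"] \\
T'(\mor A) \ar[r,"T'(t)"'] & T'(\ob A)
\end{tikzcd}
\]
Its horizontal arrows come from the target functor $d_0^{X^T}$ of $(X^T)_1$ and $(X^T)_0 = h(X)$, which is $T'$ (resp.\ $T^{\prime 2}$) applied to $t:\mor A\to\ob A$; its vertical arrows come from the source functors of the two categories $(X^T)_1$ and $(X^T)_0$, which both arise from the face functor $m_A:T^2A\to TA$ of the resolution. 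The resulting square is precisely the naturality square of the set-monad multiplication $m': T^{\prime 2}\Rightarrow T'$ at the function $t$, and it is a pullback by the cartesian hypothesis on $T'$.

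The main obstacle is the bookkeeping: one must identify the source functors of $(X^T)_1$ and of $(X^T)_0 = h(X)$ as arising from $m_A$ (rather than $Ta$), so that the required pullback reduces to the naturality of $m'$. Under the opposite identification the square would instead involve $T'(a_0)$ and $T'(a_1)$, and one would need $a: TA\to A$ itself to be a discrete opfibration, a condition not implied by the hypotheses. Once the identification is correct, the cartesian property of $T'$ closes the argument.
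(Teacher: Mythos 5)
Your proposal is correct and follows essentially the same route as the paper: both reduce codomain-discreteness of $U\res{A,a}^T$ to the source-square characterization of discrete opfibrations from Example \ref{prbofib}, identify the relevant square as the naturality square of the multiplication $m'$ at the codomain map $t:\mor A\to\ob A$ (correctly using the face map $m_A$ rather than $Ta$ for the source maps of $(X^T)_1$ and $(X^T)_0=h(X)$), and conclude by cartesianness. The only cosmetic difference is that your square is the transpose of the one drawn in the paper, which of course does not affect being a pullback.
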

\begin{proof}
The fact that $U\res{A,a}$ is a category internal in $\cat$ follows directly from the fact that $T$ is a cartesian 2-monad. Denote now by $s,t: A_1 \to A_0$ the domain, codomain maps of the category $A$. Regarding $U\res{A,a}$ as a diagram in $\set$:
\[\begin{tikzcd}
	{T^2A_1} && {TA_1} \\
	\\
	{T^2A_0} && {TA_0}
	\arrow["{Ta_1}"', shift right=2, from=1-1, to=1-3]
	\arrow["Tt", shift left=2, from=1-3, to=3-3]
	\arrow["{T^2s}"', shift right=2, from=1-1, to=3-1]
	\arrow["{Ta_0}"', shift right=2, from=3-1, to=3-3]
	\arrow["{T^2t}", shift left=2, from=1-1, to=3-1]
	\arrow["Ts"', shift right=2, from=1-3, to=3-3]
	\arrow["{m_{A_1}}", shift left=2, from=1-1, to=1-3]
	\arrow["{m_{A_0}}", shift left=2, from=3-1, to=3-3]
\end{tikzcd}\]
The domain functor $d_0^T$ of $U\res{A,a}^T$ has object and morphism components these maps:
\begin{align*}
(d_0^T)_0 &= Tt \\
(d_0^T)_1 &= T^2t.
\end{align*}
To show that it's a discrete opfibration is to show that the square below is a pullback in $\set$ (see Example \ref{prbofib}):
\[\begin{tikzcd}
	{T^2A_1} && {TA_1} \\
	\\
	{T^2A_0} && {TA_0}
	\arrow["{T^2t}"', from=1-1, to=3-1]
	\arrow["Tt", from=1-3, to=3-3]
	\arrow["{m_{A_1}}", from=1-1, to=1-3]
	\arrow["{m_{A_0}}"', from=3-1, to=3-3]
\end{tikzcd}\]
This follows from the fact that $m$ is a cartesian natural transformation.
\end{proof}

\begin{nota}
Given a strict $T$-algebra $(A,a)$ we denote:
\[
\cnr{A,a} := \cnr{U\res{A,a}^T}.
\]
\end{nota}

Since by our assumption $T$ preserves codescent objects, we can lift the codescent object $\cnr{A}$ from $\cat$ to $\talgs$. Combining Proposition \ref{prop_resAaTiscoddiscr} with Proposition \ref{propcodinvariantwrttransp}, we obtain:

\begin{theo}\label{THM_laxmorclassifier_is_cnr}
Let $(T,m,i)$ be a 2-monad on $\cat$ of form $\cat(T')$ for  a cartesian monad $T'$ on $\set$. Assume that $T$ preserves reflexive codescent objects. Then the lax morphism classifier for a $T$-algebra $(A,a)$ is given by the category of corners associated to the transpose of the resolution of this $T$-algebra. In other words:
\[
(A,a)' = \cnr{A,a}.
\]
\end{theo}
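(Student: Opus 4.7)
The plan is to chain together the three preceding propositions of this subsection with the adjoint formula. By the earlier theorem giving the left adjoint to $\talgs \hookrightarrow \talgl$, we have $(A,a)' = \cod{\res{A,a}}$ with the codescent object computed in $\talgs$. Since $T$ preserves reflexive codescent objects by hypothesis and $U \colon \talgs \to \cat$ is monadic, $U$ creates this codescent object. Hence on underlying categories $U((A,a)') \cong \cod{U\res{A,a}}$, now computed in $\cat$.

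Next I would apply Proposition \ref{propcodinvariantwrttransp} to replace $U\res{A,a}$ by its transpose, yielding $\cod{U\res{A,a}} \cong \cod{U\res{A,a}^T}$. By Proposition \ref{prop_resAaTiscoddiscr}, the double category $U\res{A,a}^T$ is codomain-discrete, and every codomain-discrete double category is crossed (as noted just after Definition \ref{deficoddiscr}). Proposition \ref{prop_cnriscodobj} then identifies the codescent object of a crossed double category with its category of corners, so
\[
\cod{U\res{A,a}^T} \cong \cnr{U\res{A,a}^T} = \cnr{A,a}.
\]

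The main subtlety---and the step I would be most careful about---is the ``lifting'' assertion: that the codescent object built in $\cat$ inherits a canonical $T$-algebra structure rendering it the codescent object in $\talgs$. This is standard once one has preservation by $T$ together with monadicity of $U$, and it is exactly what underwrites the left-adjoint formula of the theorem preceding Definition \ref{defi_resolution} (cf.\ \cite{codobjcoh}). Given this, no additional work is required: the equality $(A,a)' = \cnr{A,a}$ is just the composite of the displayed isomorphisms, and the proof is essentially a three-line citation chain through Propositions \ref{propcodinvariantwrttransp}, \ref{prop_resAaTiscoddiscr}, and \ref{prop_cnriscodobj}.
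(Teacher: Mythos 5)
Your proposal is correct and follows essentially the same route as the paper: the paper likewise derives the theorem by combining the adjoint formula $(A,a)' = \cod{\res{A,a}}$, the lifting of the codescent object from $\cat$ to $\talgs$ via preservation by $T$, invariance under transposition (Proposition \ref{propcodinvariantwrttransp}), codomain-discreteness of $U\res{A,a}^T$ (Proposition \ref{prop_resAaTiscoddiscr}), and the identification of the codescent object of a crossed double category with its category of corners (Proposition \ref{prop_cnriscodobj}). Your explicit attention to the creation of the codescent object by the monadic forgetful functor is the same point the paper compresses into ``we can lift the codescent object from $\cat$ to $\talgs$.''
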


\subsection{Examples}

\begin{pr}
Let $(T,m,i)$ be the free strict monoidal category 2-monad on $\cat$. It is of form $\cat(T')$ for the free monoid monad on $\set$, and it preserves reflexive codescent objects since it preserves sifted colimits (see \cite[Example 4.3.7]{internalalg}).

Let $(\ca,\otimes)$ be a strict monoidal category. The double category $\res{A,\otimes}$ has:
\begin{itemize}
\item Objects the tuples of objects $(a_1,\dots, a_n) \in \ob T\ca$,
\item vertical morphisms being tuples of morphisms $(f_1,\dots,f_n) \in \mor T\ca$,
\item horizontal morphisms being \textit{partial evaluations}\footnote{See for instance \cite{parteval}.}, that is, objects of $T^2\ca$ whose codomain is given by $T\otimes$ and whose domain is given by the multiplication $m_\ca$. For instance:
\[\begin{tikzcd}
	{(a_1,a_2,a_3,a_4)} &&& {(a_1,a_2\otimes a_3, I, a_4)}
	\arrow["{((a_1),(a_2,a_3),(),(a_4))}", from=1-1, to=1-4]
\end{tikzcd}\]
\item squares being morphisms of $T^2\ca$.
\end{itemize}

The fact that the transpose of this double category is codomain-discrete amounts to having a unique filler for every bottom-left corner in $\res{\ca,\otimes}$, for example consider the following:
\[\begin{tikzcd}
	{(a_1,a_2,a_3,a_4)} \\
	{(b_1,b_2,b_3,b_4)} &&& {(b_1, b_2\otimes b_3, I, b_4)}
	\arrow["{(f_1,f_2,f_3,f_4)}"', from=1-1, to=2-1]
	\arrow["{((b_1),(b_2,b_3),(),(b_4))}"', from=2-1, to=2-4]
\end{tikzcd}\]
The unique filler is given by square $((f_1),(f_2,f_3),(),(f_4)) \in \mor T^2\ca$.

By Theorem \ref{THM_laxmorclassifier_is_cnr}, the lax morphism classifier is the category $\cnr{\ca,\otimes}$ described as follows. The objects are tuples of objects from $\ca$, while a morphism is a tuple $(e,(f_1,\dots, f_n))$ of a partial evaluation followed by a tuple of morphisms. For instance here is an example of a morphism $(a_1,a_2,a_3) \to (b_1,b_2,b_3,b_4)$:
\[\begin{tikzcd}
	{(a_1,a_2,a_3)} &&& {(a_1 \otimes a_2, I, a_3,I)} \\
	&&& {(b_1, b_2, b_3, b_4)}
	\arrow["{((a_1,a_2),(),(a_3),())}", from=1-1, to=1-4]
	\arrow["{(f_1, f_2, f_3, f_4)}", from=1-4, to=2-4]
\end{tikzcd}\]
The strict monoidal structure is given by concatenation of lists. By Lemma \ref{lemma_coddiscrimpliessfs}, this category admits a strict factorization system given by partial evaluations followed by tuples of morphisms of $\ca$.
\end{pr}

The following is the extension of the previous example in the sense that we obtain it if we put $X = *$:

\begin{pr}\label{prcodlax-colaxfunclass}
Fix a set $X$. Consider a cartesian monad $T$ on $\set^{X \times X}$ (the category of graphs with the set of objects being $X$) given by paths:
\[
(T\cc)_{A,B} = \text{Path}_\cc(A,B) = \{ (f_1,\dots, f_m) | m \in \mathbb{N}, \text{cod}(f_{i})= \text{dom}(f_{i+1}) \forall i < m \}
\]

\noindent Consider its extension $\cat(T)$ (that we again denote by $T$) to $\cat(\set^{X \times X}) = \cat^{X \times X}$, the 2-category of $\cat$-graphs whose set of objects is $X$. A strict $T$-algebra $\cc$ is a $\cat$-graph equipped with a composition functor for each tuple $(A,B) \in X\times X$:
\[
\text{Path}_\cc(A,B) \to \cc(A,B).
\]
It is easily verified that such $\cc$ is precisely a small 2-category with the set of objects being $X$. Also, lax $T$-algebra morphisms are identity-on-objects lax functors.

Any 2-category $\cc$ (regarded as a $T$-algebra) gives rise to its resolution $U\res{\cc}$, which is a diagram in $\cat^{X \times X}$. Denote its codescent object by $\cc'$ - this is a $\cat$-graph with the set of objects being $X$ that moreover has the structure of a 2-category.

As colimits in $\cat^{X \times X}$ are computed pointwise, $\cc'(x,y)$ is given by the codescent object of $\res{\cc}(x,y)$. Because the 2-monad multiplication $m: T^2 \Rightarrow T$ is a pointwise discrete opfibration, each $\res{\cc}(x,y)$ is a codomain-discrete double category and so $\cc'(x,y)$ can be computed using the category of corners construction as follows. 

Objects in $\cc'(x,y)$ are the objects of $T\cc(x,y)$, that is, paths of morphisms in the 2-category $\cc$. Morphisms are corners whose first component is given by a \textit{partial evaluation 2-cell} (an object of $T^2\cc(x,y)$) and the second component is given by a tuple of 2-cells in $\cc$ (a morphism in $T\cc(x,y)$). For instance this morphism $(f_1, f_2, f_3,f_4) \to (g_1, g_2, g_3)$:
\[\begin{tikzcd}
	{a_1} && {a_2} && {a_3} && {a_4} && {a_5} \\
	&&& {a_3} && {a_4}
	\arrow[""{name=0, anchor=center, inner sep=0}, Rightarrow, no head, from=2-6, to=1-9]
	\arrow[""{name=1, anchor=center, inner sep=0}, "{f_2 \circ f_1}"{pos=0.7}, from=1-1, to=2-4]
	\arrow[""{name=2, anchor=center, inner sep=0}, "{g_1}"', curve={height=30pt}, from=1-1, to=2-4]
	\arrow[""{name=3, anchor=center, inner sep=0}, "{g_3}"', curve={height=30pt}, from=2-6, to=1-9]
	\arrow["{f_1}", from=1-1, to=1-3]
	\arrow["{f_2}", from=1-3, to=1-5]
	\arrow["{f_4}", from=1-7, to=1-9]
	\arrow["{f_3}", from=1-5, to=1-7]
	\arrow[""{name=4, anchor=center, inner sep=0}, "{g_2}"', curve={height=30pt}, from=2-4, to=2-6]
	\arrow[""{name=5, anchor=center, inner sep=0}, "{f_4 \circ f_3}"{description}, from=2-4, to=2-6]
	\arrow["{\alpha_1}", shorten <=4pt, shorten >=4pt, Rightarrow, from=1, to=2]
	\arrow["{\alpha_3}", shorten <=4pt, shorten >=4pt, Rightarrow, from=0, to=3]
	\arrow["{\alpha_2}", shorten <=4pt, shorten >=4pt, Rightarrow, from=5, to=4]
	\arrow["{((f_1,f_2),(f_3,f_4),())}"{pos=0.4}, shorten <=3pt, shorten >=3pt, Rightarrow, from=1-5, to=5]
\end{tikzcd}\]
The 2-category structure of the lax functor classifier $\cc'$ is given by concatenation of paths and tuples of 2-cells. By Lemma \ref{lemma_coddiscrimpliessfs}, each hom category of $\cc'$ admits a strict factorization system given by \textit{partial evaluation 2-cells} and tuples of 2-cells on $\cc$. Moreover, these strict factorization systems are stable under post- and pre-composition with 1-cells of $\cc'$.


This description of the lax functor classifier 2-category has been sketched in \cite[Page 246]{elephant}.
\end{pr}

\begin{pozn}[Colax morphism classifiers]
We can apply dualities to compute \textbf{colax} morphism classifier for a 2-monad $T$ on $\cat$ of form $\cat(T')$ as follows. First note that the opposite category 2-functor $(-)^{op}: \cat^{co} \to \cat$ induces a 2-isomorphism
\begin{align*}
\talgc &\cong T^{co}\text{-Alg}_l,\\
(A,a) &\mapsto (A^{op},a^{op}).
\end{align*}
This implies that a $T$-algebra $(B,b)$ is the colax $T$-morphism classifier for $(A,a)$ if and only if $(B^{op},b^{op})$ is the lax $T^{v}$-morphism classifier for $(A^{op},a^{op})$.

Now let $(A,a)$ be a strict $T$-algebra. The lax-$T^{co}$-morphism classifier for $(A^{op},a^{op})$ is a $T^{co}$-algebra $\cnr{A^{op}}$, and thus the colax morphism classifier is given by the formula:
\[
(A,a)' = \cnr{A^{op}}^{op}.
\]

For instance, the colax monoidal functor classifier for a strict monoidal category $(\ca,\otimes)$ again has tuples of objects in $\ca$ as object, and a morphism $(a_1,a_2,a_3,a_4) \to$\\ $\to (b_1,b_2,b_3,b_4)$ is a corner (or rather, a \textit{cospan}) like this:
\[\begin{tikzcd}
	{(a_1,a_2,a_3,a_4)} \\
	{(b_1, b_2\otimes b_3, I, b_4)} &&& {(b_1,b_2,b_3,b_4)}
	\arrow["{(f_1,f_2,f_3,f_4)}"', from=1-1, to=2-1]
	\arrow["{((b_1),(b_2,b_3),(),(b_4))}", from=2-4, to=2-1]
\end{tikzcd}\]
\end{pozn}

\begin{pr}\label{prcodlax-dblcats}
While this example does not follow from the results as stated in this paper, it follows from their internal analogue in $\cat(\ce)$, where $\ce = \gph$. Let $\fc : \gph \to \gph$ be the free category on a graph monad. It is a cartesian monad, it thus admits an extension to a 2-monad $T:= \cat(\fc)$ on the 2-category $\cat(\gph)$. This 2-category consists of structures like double categories except we can not compose squares or horizontal morphisms horizontally, there's only vertical composition.

A strict $T$-algebra is a double category. It can also be seen that a colax $T$-algebra morphism is a colax double functor. Given a $T$-algebra $X$, the construction $\cnr{X}$ of the colax double functor classifier  agrees with the construction $\mathbb{P}\text{ath } X$ of \cite[The construction 1.1, Proposition 1.19]{pathsindblcats}. The fact that it admits an internal strict factorization system was also proven in \cite[1.5 Proposition]{pathsindblcats}.

\end{pr}

The internal versions of the results in this paper as well as the generalization of the category of corners to lax $T$-algebras will appear in the author's upcoming Ph.D. thesis.

\bibliographystyle{plain} 
\bibliography{ClanekREF} 

\begin{thebibliography}{10}

\bibitem{joyalfs}
{André Joyal}.
\newblock Factorisation systems.
\newblock
  \url{https://ncatlab.org/joyalscatlab/published/Factorisation+systems}, April
  2023.

\bibitem{twodim}
Robert Blackwell, Gregory~M Kelly, and A~John Power.
\newblock Two-dimensional monad theory.
\newblock {\em Journal of pure and applied algebra}, 59(1):1--41, 1989.

\bibitem{handbook}
Francis Borceux.
\newblock {\em Handbook of categorical algebra: volume 1, Basic category
  theory}, volume~1.
\newblock Cambridge University Press, 1994.

\bibitem{handbook2}
Francis Borceux.
\newblock {\em Handbook of Categorical Algebra: Volume 2, Categories and
  Structures}, volume~2.
\newblock Cambridge University Press, 1994.

\bibitem{johnphd}
John Bourke.
\newblock {\em Codescent objects in 2-dimensional universal algebra}.
\newblock PhD thesis, University of Sydney, 2010.

\bibitem{awfs}
John Bourke and Richard Garner.
\newblock Algebraic weak factorisation systems i: Accessible awfs.
\newblock {\em Journal of Pure and Applied Algebra}, 220(1):108--147, 2016.

\bibitem{awfs2}
John Bourke and Richard Garner.
\newblock Algebraic weak factorisation systems ii: Categories of weak maps.
\newblock {\em Journal of Pure and Applied Algebra}, 220(1):148--174, 2016.

\bibitem{bryce_cof}
Bryce Clarke.
\newblock Internal lenses as functors and cofunctors.
\newblock {\em Electronic Proceedings in Theoretical Computer Science, EPTCS},
  323:183--195, 2020.

\bibitem{restrcats}
J~Robin~B Cockett and Stephen Lack.
\newblock Restriction categories i: categories of partial maps.
\newblock {\em Theoretical computer science}, 270(1-2):223--259, 2002.

\bibitem{pathsindblcats}
RJ~MacG Dawson, R~Par{\'e}, and DA~Pronk.
\newblock Paths in double categories.
\newblock {\em Theory Appl. Categ}, 16:460--521, 2006.

\bibitem{crossedsimpgps}
Zbigniew Fiedorowicz and Jean-Louis Loday.
\newblock Crossed simplicial groups and their associated homology.
\newblock {\em Transactions of the American Mathematical Society},
  326(1):57--87, 1991.

\bibitem{parteval}
Tobias Fritz and Paolo Perrone.
\newblock Monads, partial evaluations, and rewriting.
\newblock {\em Electronic Notes in Theoretical Computer Science}, 352:129--148,
  2020.

\bibitem{limitsindbl}
Marco Grandis and Robert Par{\'e}.
\newblock Limits in double categories.
\newblock {\em Cahiers de topologie et g{\'e}om{\'e}trie diff{\'e}rentielle
  cat{\'e}goriques}, 40(3):162--220, 1999.

\bibitem{elephant}
Peter~T Johnstone.
\newblock {\em Sketches of an Elephant: A Topos Theory Compendium: Volume 2},
  volume~2.
\newblock Oxford University Press, 2002.

\bibitem{elemobs}
Gregory~Maxwell Kelly.
\newblock Elementary observations on 2-categorical limits.
\newblock {\em Bulletin of the Australian Mathematical Society},
  39(2):301--317, 1989.

\bibitem{codobjcoh}
Stephen Lack.
\newblock Codescent objects and coherence.
\newblock {\em Journal of Pure and Applied Algebra}, 175(1-3):223--241, 2002.

\bibitem{distrlaws}
Robert Rosebrugh and Richard~J Wood.
\newblock Distributive laws and factorization.
\newblock {\em Journal of Pure and Applied Algebra}, 175(1-3):327--353, 2002.

\bibitem{factorizationOFSrosicky}
Jiř{\'i} Rosick{\'y} and Walter Tholen.
\newblock Factorization, fibration and torsion.
\newblock {\em Journal of Homotopy and Related Structures}, 2(2):295--314,
  2007.

\bibitem{internalalg}
Mark Weber.
\newblock Internal algebra classifiers as codescent objects of crossed internal
  categories.
\newblock {\em Theory \& Applications of Categories}, 30(50), 2015.

\end{thebibliography}


\end{document}